\newtheorem{theo}{Theorem} 
\newtheorem{lem}[theo]{Lemma}
\newtheorem{prop}[theo]{Proposition} 
\newtheorem{cor}[theo]{Corollary} 
\newtheorem{defi}{Definition} 
\newtheorem{notation}{Notation}
\newtheorem{exam}{Example}
\DeclareMathOperator{\Gdeg}{\mathrm{G-deg}}
\DeclareMathOperator{\ch}{\mathrm{char}}
\DeclareMathOperator{\spn}{\mathrm{span}}
\begin{document}
\title[Specht property of graded Lie algebras]{Specht property of varieties of graded Lie algebras}
\author{Daniela Martinez Correa}\address{Department of Mathematics, State University of Campinas, 651 S\'ergio Buarque de Holanda, 13083-859 Campinas, SP, Brazil}\email{d190688@dac.unicamp.br}

\author{Plamen Koshlukov}\address{Department of Mathematics, State University of Campinas, 651 S\'ergio Buarque de Holanda, 13083-859 Campinas, SP, Brazil}\email{plamen@unicamp.br} 
\thanks{D. Correa was supported by PhD grant from CAPES, Brazil. P. Koshlukov was partially supported by grant No. 2018/23690-6 from FAPESP, Brazil, and by  grant No. 302238/2019-0 from CNPq, Brazil.}

\keywords{Upper triangular matrices, Graded polynomial identities, Finite basis of identities, Specht problem, Graded Lie algebras}
\subjclass[2020]{16R10, 16R50, 16W50, 17B70, 17B01}\begin{abstract}
Let $UT_n(F)$ be the algebra of the $n\times n$ upper triangular matrices and denote $UT_n(F)^{(-)}$  the Lie algebra on the vector space of $UT_n(F)$ with respect to the usual bracket (commutator), over an infinite field $F$.
In this paper, we give a positive answer to the Specht property for the ideal of the $\mathbb{Z}_n$-graded identities of $UT_n(F)^{(-)}$ with the canonical grading when the characteristic $p$ of $F$ is 0 or is larger than $n-1$. Namely we prove that every ideal of graded identities in the free graded Lie algebra that contains the graded identities of $UT_n(F)^{(-)}$, is finitely based. 

Moreover we show that if $F$ is an infinite field of characteristic $p=2$ then the $\mathbb{Z}_3$-graded identities of $UT_3^{(-)}(F)$ do not satisfy the Specht property. More precisely, we construct explicitly an ideal of graded identities containing that of $UT_3^{(-)}(F)$, and which is not finitely generated as an ideal of graded identities.

\end{abstract}\maketitle
\section{Introduction}
One of the most important problems in the theory of algebras with polynomial identities is that of determining the identities of  specific algebras and studying the properties of the varieties that these algebras generate. The most significant part of the advances in this area has been obtained for associative algebras over fields of characteristic zero. 
Although the study of problems in positive characteristic and for other types of algebras has grown in the last decades, there are  still very many questions to answer.

In 1984--1986, Kemer proved in \cite{Ke} that for every associative algebra over a field of characteristic zero, its T-ideal is finitely generated as a T-ideal, thus providing a positive answer to the long standing Specht problem. To that end Kemer developed a sophisticated theory which described the structure of the ideals of identities in the free associative algebra. We observe that Kemer's theory has been shaping a good deal of the research in PI theory since then. 

Here we recall that for a wide range of groups and algebras the analogue of the Specht problem was investigated and solved. We mention the paper by Oates and Powell \cite{oates_powell}, who proved that the variety of groups generated by a finite group admits a finite basis of its laws. It is also known that the variety generated by a finite ring satisfies the Specht property; this was obtained independently by Kruse \cite{kruse} and by Lvov \cite{lvov}. Bahturin and Ol'shanski\u{\i} proved in \cite{baht_ol} that the identities of a finite Lie ring or Lie algebra also admit a finite basis of its identities.

On the other hand, if the base field is infinite but of positive characteristic, the  Specht problem can have a negative answer. The first such examples for Lie algebras were obtained by Vaughan-Lee \cite{VL}, in characteristic 2, and by Drensky \cite{dr_inf}, for every characteristic $p>0$.  The first examples in the case of associative algebras were obtained, much later, independently (and almost simultaneously) by Belov \cite{Be}, Grishin \cite{Gr}, and Shchigolev \cite{VS}. 

In \cite{AB} and \cite{IS} it was proved that the Specht problem has positive answer for graded associative algebras over fields of characteristic zero when the grading group is finite. 

Parts of the theory developed by Kemer do not work so well for non-associative algebras. Thus one should impose certain restrictions on the classes of algebras when studying the Specht problem for Lie or Jordan algebras. In characteristic 0, Iltyakov \cite{ilt} proved that if $L$ is a finitely generated Lie algebra and $A$ is an associative enveloping algebra for $L$ such that $A$ is PI then the weak polynomial identities for the pair $(A,L)$ are finitely based. A consequence of this result is that if $L$ is finitely generated and the adjoint Lie algebra $Ad(L)$ generates an associative PI algebra then the ideal of identities of the Lie algebra $L$ satisfies the Specht property. Recall that this is the case when $L$ is finite dimensional. Va\u{\i}s and Zelmanov \cite{vais_zelm} proved that if $J$ is a finitely generated Jordan PI-algebra, over a field of characteristic 0, then the ideal of identities of $J$ satisfies the Specht property. Once again Iltyakov \cite{ilt_alt} established the Specht property for the ideals of identities of finitely generated alternative algebras in characteristic 0. 

Therefore it is interesting to study the Specht problem for concrete varieties of Lie and Jordan algebras. The variety of Lie algebras generated by $sl_2(F)$, the simple 3-dimensional Lie algebra, in characteristic 0, satisfies the Specht property, this was proved by Razmyslov \cite{razm1, razm2}. Krasilnikov showed in \cite{Kr0} that the variety of Lie algebras defined by $UT_n^{(-)}(F)$, the Lie algebra of the $n\times n$ upper triangular matrices over a field $F$ of characteristic 0, satisfies the Specht property. Later on Krasilnikov in \cite{Kr} proved that if $F$ is infinite of characteristic $p>0$ and if the commutator ideal of a Lie algebra is $(p-1)$-nilpotent then the identities of that Lie algebra admit a finite basis. In this way he generalized the result from \cite{Kr0} to the case when $F$ is infinite of characteristic 0 or $p\ge n$. 

Now we recall some of the known results concerning group graded Lie and Jordan algebras and their identities. In \cite{pk_sl2} finite bases of the graded identities of $sl_2(F)$ were found, for an infinite field of characteristic different from 2, and an arbitrary grading. In \cite{GS} it was proved that, in characteristic 0, the variety of group graded Lie algebras generated by $sl_2$ has the Specht property.

In \cite{KM} and \cite{CM} the authors studied graded identities for  $UJ_2$, the Jordan algebra of $2\times 2$ upper triangular matrices. In \cite{CMS} it was shown that the variety generated by $UJ_2$ has the Specht property when it is graded by any finite abelian group. In \cite{KS} the graded identities for any $\mathbb{Z}_2$-grading on the Jordan algebra of symmetric matrices of order two were obtained, and in \cite{SS} the Specht property for the finite dimensional Jordan algebra of a non-degenerate symmetric bilinear form, graded by $\mathbb{Z}_2$, in characteristic 0, was established.  Here we recall that the more difficult situation where there is no grading at all, for this algebra, was settled by Iltyakov \cite{ilt_jordan} in the finite dimensional case. The infinite dimensional Jordan algebra of a non-degenerate symmetric bilinear form also satisfies the Specht property in characteristic 0, this follows by combining results obtained by Vasilovsky \cite{vas} and by Koshlukov \cite{pkja}.  Recently, in \cite{MS} it was shown that for any grading, the variety of graded commutative algebras generated by $(UT_2,\circ)$ has the Specht property in characteristic $2$.

We recall that if the grading group is infinite then, even in characteristic 0,  the graded identities of an algebra need not be finitely based, see for example \cite{fkk, fid_pk}.

In \cite{KY}, a finite basis for the graded identities for $UT_n(F)^{(-)}$ was found when this algebra is endowed with the canonical grading of $\mathbb{Z}_n$ and the field $F$ is infinite. 

In this paper, we study the variety of graded Lie algebras generated by  $UT_n(F)^{(-)}$, endowed whit the canonical $\mathbb{Z}_n$-grading. We prove that, when the characteristic of $F$ is 0 or is a prime $p\ge n$, it satisfies the Specht property. In order to achieve this we employ properties of partially  well-ordered sets. Furthermore we prove that the restriction $p\ge n$ for the characteristic of the base field cannot be removed. Namely we prove that the $\mathbb{Z}_3$-graded identities of $UT_3^{(-)}(F)$ do not satisfy the Specht property if $F$ is an infinite field of characteristic 2. To the best of our knowledge this is the first example of a finite dimensional Lie algebra nontrivially graded by a finite group that does not satisfy the Specht property.

The paper is organized as follows. In the Preliminaries we recall the required facts concerning polynomial identities, gradings, and partially well ordered sets. In Section 3 we deal first with the graded identities of the Lie algebra $UT_2^{(-)}(F)$, and prove that the corresponding ideal of graded identities satisfies the Specht property. Afterwards we consider $UT_3^{(-)}(F)$. Initially we require $F$ an infinite field of characteristic 0 or $p>2$, and prove that the ideal of graded identities satisfies the Specht property. We also show that if $F$ is of characteristic 2, then the Specht property fails in this case. In the last section we prove the Specht property for $UT_n^{(-)}(F)$ in case $\ch F=0$ or $\ch F\ge n$. We chose to separate the general case from those when $n\le 3$ for several reasons. One of them is that the case $n=2$ is much simpler and transparent. Another is that when $n=3$ we have two completely different situations: when $\ch F=2$ and when $\ch F\ne 2$. And the last reason is that the arguments in the cases $n=2$ and $n=3$ are more transparent and give a better idea of the methods used in the general case. 

\section{Preliminaries}
\subsection{Graded identities}
Let $G$ be a multiplicative group with unit element $1$. 
We fix an infinite field $F$ and we consider all algebras and vector spaces over $F$. Let A be a $G$-graded algebra (not necessarily associative), that is $A =\bigoplus_{g\in G} A_g$, a direct sum of vector subspaces such that $A_gA_h\subseteq A_{gh}$ for every $g$, $h\in G$. The elements $a\in A_g$ are \textsl{homogeneous} of 
degree $g$, and we denote this as $\Gdeg a = g$, or if not ambiguous, simply as $\deg a=g$.
\begin{defi}
Let $A=\bigoplus_{g\in G} A_g$ and $B=\bigoplus_{g\in G} B_g$ be two $G$-graded algebras. The map $f\colon A\to B$ is a homomorphism (endomorphism, automorphism, isomorphism) of $G$-graded algebras if $f$ is a homomorphism (endomorphism, automorphism, isomorphism) of algebras such that $f(A_g)\subseteq B_g$.

The algebras $A$ and $B$ are $G$-graded isomorphic if there exists an isomorphism of $G$-graded algebras  $f\colon A \to B$.
\end{defi}
Note that the previous definition says that $A$ and $B$ are isomorphic as $G$-graded algebras whenever there is an isomorphism of algebras from $A$ to $B$ which respects the gradings.
A vector subspace $B\subseteq A$ is called \textsl{graded} (or homogeneous) if $B=\bigoplus_{g\in G}(A_g\cap B)$. If $I\subseteq A$ is an ideal and a graded subspace, we call it a \textsl{graded ideal}. In this case the quotient $A/I$ inherits from A a natural structure of $G$-graded algebra.

We recall the definition of the \textsl{free $G$-graded algebra}. Let $g\in G$ and let $X_g= \{x_1^{(g)},x_2^{(g)},\ldots,\}$ be an infinite (countable) set of variables. Put $X=X_G=\bigcup_{g\in G} X_g$ and form the free algebra (associative, or Lie, or whatever) $F\{X_G\}$ in the variables of $X$.  We can define naturally a $G$-grading on $F\{X_G\}$ by assigning degree $g$ to all variables from $X_g$, $\deg x_i^{(g)} = g$, and then extending this to all monomials $m\in F\{X_G \}$ in the following way
\[
\deg m=\left\{ \begin{array}{rcl}
            g, & \text{ if }&  m=x_{i}^{(g)}\\     
         (\deg m_1)(\deg m_2) & \text{ if } & m=m_1m_2\end{array}
   \right.
   \]
In the case of associative and Lie algebras, we denote the corresponding free algebra by
$F\langle X_G\rangle$ and $ \mathcal{L}\langle X_G \rangle$, respectively.
\begin{defi}
Consider $A$ a $G$-graded algebra in the class of  $F\{X_G\}$ and let $f(x_1^{(g_1)},\ldots, x_n^{(g_{n})})\in F\{X_G\}$. Then $f$ is a $G$-graded polynomial identity for $A$ if $f(a_1 ,\ldots , a_m )= 0$ for all $a_1$,  \dots, $a_m\in A$ such that $\deg a_i= g_i$ for every $i$.
\end{defi}
We denote by $T_G(A)$ the set of $G$-graded identities for $A$. Observe that $T_G(A)$ is closed under endomorphisms of $F\{X_G\}$ that respect the grading. Conversely every such ideal is the ideal of $G$-graded identities for some $G$-graded algebra.

If $J\subseteq F\{X_G\}$ is a $G$-graded ideal that is closed under endomorphisms of $F\{X_G\}$, then $J$ is called a $T_{G}$-ideal.

We recall briefly some basic notions from PI theory we will need. Let $\{f_i\mid i\in I\}$ and $g$ be graded polynomials where $I$ is some set of indices, then $g$ is a \textsl{consequence} of $\{f_i\mid i\in I\}$ whenever $g$ lies in the least $T_G$-ideal that contains the set $\{f_i\mid i\in I\}$. Two sets of graded polynomials are \textsl{equivalent} if the $T_G$-ideals generated by these sets are equal. If $S$ is a set of graded polynomials we denote by $\langle S\rangle_{T_G}$ the $T_G$-ideal generated by $S$. 

\begin{defi}
Let $A$ be a $G$-graded algebra. Then $T_G(A)$ has the Specht property if every $T_{G}$-ideal $J$ such that  $T_G(A)\subseteq J$, has a finite basis, that is $J$ is finitely generated as a $T_{G}$-ideal. In particular, $T_G(A)$ must be finitely generated as a $T_G$-ideal.
\end{defi}

\subsection{Finite basis property for sets}
We collect notions concerning orders and the finite basis property for sets. For more details see \cite{Hig}.

Let $P$ be a non-empty set. A relation $p_1\leq p_2$ on $P$ is a \textsl{quasi-order} if it is reflexive and transitive. It means that $(i)$ $p_1\leq p_1$
for every $p_1\in P$, and $(ii)$ $p_1\leq p_2$ and $p_2\leq p_3$ imply $p_1\leq p_3$. If also $p_1\leq p_2$ and $p_2 \leq p_1$ imply $p_1 = p_2$, the relation is an \textit{order}; and if in addition for every $p_1$, $p_2$ either $p_1\leq p_2$ or $p_2\leq p_1$, it is a linear order.

If $Q$ is a subset of the quasi-ordered set $P$, the closure of $Q$, written $\overline{Q}$, is the set of all elements $p\in P$ such that for some $q$ in $Q$, $q\leq p$. A \textit{closed} subset of $P$ is one that coincides with its own closure. The quasi-ordered set $P$ has the \textsl{finite basis property} (f.b.p.), also called \textsl{well-quasi-ordered set}, if every closed subset of $P$ is the closure of a finite set of elements. The following theorem was proved in \cite{Hig} and gives useful equivalences to the f.b.p.
\begin{theo}[Higman \cite{Hig}]\label{eqhigman}
The following conditions on a quasi-ordered set $P$ are equivalent.
\begin{enumerate}[label=\roman*.]
\item Every closed subset of $P$ is the closure of a finite subset;
\item  If $Q$ is any subset of $P$, there is a finite $Q_0$ such that $Q_0\subseteq Q\subseteq \overline{Q}_0$ ;
\item Every infinite sequence of elements $\{p_i\}_{i\geq 1}$ of $P$ has an infinite ascending subsequence
$$p_{i_1}\leq p_{i_2}\leq\cdots \leq p_{i_k}\leq\cdots ;$$
\item There exists neither an infinite strictly descending sequence in $P$ nor an infinite one consisting of pairwise incomparable elements of $P$.
\end{enumerate}
\end{theo}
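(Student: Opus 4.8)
The plan is to prove the equivalence of the four conditions via the cycle $(i)\Leftrightarrow(ii)$, $(ii)\Rightarrow(iv)$, $(iv)\Rightarrow(iii)$, $(iii)\Rightarrow(ii)$. The equivalence $(i)\Leftrightarrow(ii)$ is purely formal manipulation of closures: for $(i)\Rightarrow(ii)$, given $Q\subseteq P$ one applies $(i)$ to the closed set $\overline{Q}$, writing it as $\overline{\{f_1,\dots,f_n\}}$, and then chooses $q_i\in Q$ with $q_i\le f_i$; with $Q_0=\{q_1,\dots,q_n\}$ one has $Q_0\subseteq Q\subseteq\overline{Q}=\overline{\{f_1,\dots,f_n\}}\subseteq\overline{Q_0}$. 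Conversely, for $(ii)\Rightarrow(i)$, if $C$ is closed then a finite $Q_0\subseteq C\subseteq\overline{Q_0}$ as in $(ii)$ satisfies $\overline{Q_0}\subseteq\overline{C}=C$, hence $C=\overline{Q_0}$.

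For $(ii)\Rightarrow(iv)$, I would argue by contradiction against each of the two forbidden configurations. If $p_1>p_2>\cdots$ is an infinite strictly descending sequence, the elements are pairwise distinct and, for $i<j$, one has $\neg(p_i\le p_j)$, since otherwise transitivity along the chain would force $p_i\le p_{i+1}$; applying $(ii)$ to $Q=\{p_i\mid i\ge1\}$ gives a finite $Q_0\subseteq Q$, and choosing $N$ larger than every index occurring in $Q_0$, the element $p_{N+1}\in Q\subseteq\overline{Q_0}$ would be dominated by some $p_m\in Q_0$ with $m\le N$, contradicting $\neg(p_m\le p_{N+1})$. An infinite antichain $\{p_i\}$ is excluded in the same spirit: the elements are distinct, $(ii)$ yields a finite $Q_0\subseteq Q=\{p_i\mid i\ge1\}$, and any $p_j\in Q\setminus Q_0$ lies in $\overline{Q_0}$, hence is dominated by some distinct element of $Q_0$, contradicting pairwise incomparability.

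The substantive step is $(iv)\Rightarrow(iii)$, for which I would invoke Ramsey's theorem. Given an infinite sequence $\{p_i\}_{i\ge1}$, color each pair $\{i,j\}$ with $i<j$ by one of three colors according to whether $p_i\le p_j$, or $\neg(p_i\le p_j)$ while $p_j\le p_i$, or $p_i$ and $p_j$ are incomparable. An infinite monochromatic set of indices is then respectively an infinite ascending subsequence, an infinite strictly descending sequence, or an infinite antichain; the last two are ruled out by $(iv)$, so $(iii)$ follows. Finally, $(iii)\Rightarrow(ii)$ is a greedy ``bad sequence'' argument: if no finite subset $Q_0\subseteq Q$ satisfied $Q\subseteq\overline{Q_0}$, one would recursively select $q_{n+1}\in Q\setminus\overline{\{q_1,\dots,q_n\}}$, producing an infinite sequence in $Q$ with $\neg(q_i\le q_j)$ for all $i<j$, which admits no ascending subsequence, contradicting $(iii)$.

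The main obstacle is precisely $(iv)\Rightarrow(iii)$: here the finitary, pair-by-pair information furnished by $(iv)$ must be bootstrapped into a single infinite ascending chain, and this requires a genuinely infinitary combinatorial device --- Ramsey's theorem as above, or alternatively Nash--Williams' minimal bad sequence argument. The remaining three implications are elementary and use only reflexivity and transitivity of the quasi-order together with bookkeeping of closures.
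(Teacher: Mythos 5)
The paper does not prove this theorem: it is quoted from Higman's 1952 paper \cite{Hig} and cited as a known result, so there is no ``paper's own proof'' to compare against. Your proof is a self-contained supplement, and it is correct.

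Your cycle $(i)\Leftrightarrow(ii)\Rightarrow(iv)\Rightarrow(iii)\Rightarrow(ii)$ is complete and each step holds. In $(i)\Rightarrow(ii)$ the key observation that each $f_i\in\overline{\{f_1,\dots,f_n\}}=\overline{Q}$, hence dominates some $q_i\in Q$, and that then $\overline{\{f_1,\dots,f_n\}}\subseteq\overline{Q_0}$, is exactly what is needed. In $(ii)\Rightarrow(iv)$ you correctly note that in a strictly descending chain $\neg(p_i\le p_j)$ holds for all $i<j$ by transitivity, so a late element cannot lie in the closure of a finite initial segment; the antichain case is handled by the same device. The infinite-Ramsey step for $(iv)\Rightarrow(iii)$, with the three-colouring of pairs $\{i,j\}$ ($i<j$) into ``$p_i\le p_j$'', ``$p_j\le p_i$ but not $p_i\le p_j$'', and ``incomparable'', is clean: an infinite homogeneous set yields respectively an ascending subsequence, a strictly descending sequence (note that for indices $i_1<i_2<\cdots$ homogeneous of the second colour, each $p_{i_{k+1}}\le p_{i_k}$ with $\neg(p_{i_k}\le p_{i_{k+1}})$), or an antichain (here reflexivity guarantees the $p_i$ are genuinely distinct), and $(iv)$ forbids the latter two. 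Finally $(iii)\Rightarrow(ii)$ by the greedy choice of $q_{n+1}\in Q\setminus\overline{\{q_1,\dots,q_n\}}$ produces a sequence in which $\neg(q_i\le q_j)$ for all $i<j$, hence with no ascending subsequence at all, as required. Higman's original argument does not phrase the pivotal implication in Ramsey's language --- he builds the ascending subsequence more directly --- but the underlying combinatorics is the same, and invoking Ramsey's theorem as you do is the standard modern packaging and arguably more transparent. Nothing is missing.
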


Consider $P$ a quasi-ordered set and let $D(P)$ be the set
of all finite sequences of elements of $P$. Then $D(P)$ is quasi-ordered by the rule: $x \leq y$ if $x$ is majorized by a subsequence of $y$. In other words  $x = (p_1,\ldots, p_n) \leq y = (q_1,\ldots, q_s)$ if there exists an order preserving injection $\varphi\colon \mathbb{N}\to\mathbb{N}$ such that $\varphi(n)\leq s$ and $p_i\leq q_{\varphi(i)}$ for every $i=1$, \dots, $n$.

As an example, take the set of positive integers $\mathbb{N}$, with respect to the usual order it is well ordered. The following theorem shows that $D(\mathbb{N})$ satisfies the f.b.p. (Clearly this fact can be deduced by an elementary and easy argument.)

\begin{theo}[Higman, \cite{Hig}]\label{DP}
Let $P$ be a quasi-ordered set. If $P$ has f.b.p., so has $D(P)$.
\end{theo}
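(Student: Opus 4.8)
The plan is to prove this by the \emph{minimal bad sequence} method (Nash--Williams). Call an infinite sequence $(x_k)_{k\ge 1}$ in a quasi-ordered set \emph{bad} if $x_i\not\le x_j$ whenever $i<j$. Every infinite antichain and every infinite strictly descending sequence is in particular a bad sequence, so by the equivalence of conditions (i) and (iv) in Theorem~\ref{eqhigman} it is enough to show that $D(P)$ contains no bad sequence. I would also record at the outset that no term of a bad sequence in $D(P)$ can be the empty sequence, since the empty sequence is $\le$ every element of $D(P)$; hence every term of such a sequence has a well-defined first entry. Two elementary facts about the order on $D(P)$ will be used repeatedly: deleting the first entry of a sequence yields a $\le$-smaller one, and prepending $\le$-comparable elements to $\le$-comparable sequences preserves $\le$.

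Assume, for contradiction, that a bad sequence in $D(P)$ exists, and build a minimal one recursively: having chosen $s_1,\dots,s_{k-1}\in D(P)$ so that this tuple is the initial segment of \emph{some} bad sequence, let $s_k\in D(P)$ be of \emph{least possible length} subject to $(s_1,\dots,s_{k-1},s_k)$ still being the initial segment of some bad sequence. This produces a bad sequence $(s_k)_{k\ge 1}$ all of whose terms are nonempty; write $s_k=(a_k,t_k)$ with $a_k\in P$ the first entry and $t_k\in D(P)$ the remaining tail, so that $t_k\le s_k$ and $\operatorname{length}(t_k)=\operatorname{length}(s_k)-1$. Since $P$ has the f.b.p., condition (iii) of Theorem~\ref{eqhigman} applied to the sequence $(a_k)_{k\ge 1}$ in $P$ yields indices $k_1<k_2<\cdots$ with $a_{k_1}\le a_{k_2}\le\cdots$.

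Now I would form the spliced sequence
\[
s_1,\ s_2,\ \dots,\ s_{k_1-1},\ t_{k_1},\ t_{k_2},\ t_{k_3},\ \dots
\]
and argue it is again bad. Comparisons among the $s_i$ with $i<k_1$ are excluded because $(s_k)$ is bad. If $t_{k_m}\le t_{k_\ell}$ with $m<\ell$, then prepending $a_{k_m}\le a_{k_\ell}$ gives $s_{k_m}\le s_{k_\ell}$ with $k_m<k_\ell$, contradicting badness of $(s_k)$. If $s_i\le t_{k_m}$ with $i<k_1$, then $s_i\le t_{k_m}\le s_{k_m}$ with $i<k_1\le k_m$, again impossible. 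So the spliced sequence is bad; its first $k_1-1$ terms coincide with those of $(s_k)$, while its $k_1$-th term $t_{k_1}$ is strictly shorter than $s_{k_1}$. This contradicts the minimal choice of $s_{k_1}$, so no bad sequence exists in $D(P)$, i.e.\ $D(P)$ has the f.b.p.

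The main obstacle is precisely the verification that the spliced sequence is bad, which is where the two structural properties of the order on $D(P)$ noted above are essential; everything else is bookkeeping. A secondary point needing care is the legitimacy of the recursive construction (it uses dependent choice) together with the observation that minimality must be measured term by term, with all earlier terms held fixed, since it is exactly the configuration ``earlier terms unchanged, current term shortened'' that the splicing violates. One can also reverse the logic and instead invoke condition (iii) directly, but then passing from ``no infinite ascending subsequence'' to ``bad sequence'' needs an auxiliary Ramsey-type argument, which the route through condition (iv) avoids.
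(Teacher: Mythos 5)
Your proposal is correct: it is the standard Nash--Williams minimal bad sequence proof of Higman's lemma, and the essential verifications are all in place --- the tail of a nonempty sequence is dominated by the sequence itself, prepending comparable first entries to comparable tails preserves the domination order on $D(P)$, and the three cases (both terms in the prefix, both among the tails, one in each) in checking that the spliced sequence is bad are handled properly; reducing to ``no bad sequence'' via condition (iv) of Theorem \ref{eqhigman}, after observing that strictly descending sequences and infinite antichains are bad, is also sound, and you rightly flag the use of dependent choice in the recursive construction. Bear in mind, though, that the paper itself does not prove Theorem \ref{DP}: it is quoted from \cite{Hig} and used as a black box, so there is no internal proof to compare against. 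Higman's original argument in \cite{Hig} is of a different character --- it works with closed subsets and a minimal-counterexample analysis in the general setting of abstract algebras with finitely many operations, of which the divisibility order on finite sequences is a special case --- whereas the minimal bad sequence route you take (due to Nash--Williams) is shorter and self-contained, trading Higman's more structural machinery for an appeal to dependent choice. For the purposes of this paper the statement is a citation, and your argument is a legitimate standalone justification of it.
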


The next result will be very useful in our paper.
\begin{prop}[Higman \cite{Hig}]\label{proordem}
Let $(P_1,\leq_1)$, $(P_2,\leq 2)$, \dots, $(P_k,\leq_k)$ be quasi-ordered sets satisfying the f.b.p.
\begin{enumerate}[label=\roman*.]
\item 
The disjoint union of $P_1$, $P_2$, \dots, $P_k$, endowed with the quasi-order where $p\leq q$ if and only if $p$, $q\in P_i$ and $p \leq_{i} q$ for some $i\in\{1,2,\ldots,k\}$, satisfies the f.b.p.
\item 
The Cartesian product $P_1\times P_2\times\cdots\times P_k$ endowed with the quasi-order given by  $(p_1,p_2,\ldots,p_k)\leq (q_1,q_2,\ldots, q_k)$ whenever $p_i\leq_i q_i$ for every $i\in\{1,\ldots,k\}$, satisfies the f.b.p.
\end{enumerate}
\end{prop}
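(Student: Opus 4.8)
The plan is to reduce everything to the equivalence in Theorem~\ref{eqhigman} between the f.b.p.\ and its condition~(iii): a quasi-ordered set has the f.b.p.\ exactly when every infinite sequence of its elements has an infinite ascending subsequence. With this characterization in hand, both parts become short pigeonhole/extraction arguments, and an immediate induction on $k$ lets us assume $k=2$ throughout.

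For part (i), set $P=P_1\sqcup P_2$ with the quasi-order described in the statement, and let $\{p_i\}_{i\ge 1}$ be an arbitrary infinite sequence in $P$. By the pigeonhole principle there is $j\in\{1,2\}$ such that $p_i\in P_j$ for infinitely many $i$; restricting to those indices we get an infinite sequence lying in $P_j$, which by hypothesis and Theorem~\ref{eqhigman}(iii) has an infinite ascending subsequence. Since the quasi-order of $P$ agrees with $\le_j$ on $P_j$, that subsequence ascends in $P$ as well. Hence $P$ satisfies condition~(iii), so it has the f.b.p., and induction on the number of summands gives the general case. One could just as well check condition~(iv): an infinite strictly descending sequence or an infinite antichain in $P$ would, by pigeonhole, have infinitely many members in a single $P_j$, contradicting the f.b.p.\ of $P_j$, because elements of different summands are incomparable.

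For part (ii), set $P=P_1\times P_2$ with the componentwise order and let $\{(a_i,b_i)\}_{i\ge 1}$ be an infinite sequence in $P$. Applying Theorem~\ref{eqhigman}(iii) to the sequence $\{a_i\}$ in $P_1$, pass to an infinite subsequence of indices $i_1<i_2<\cdots$ along which $a_{i_1}\le_1 a_{i_2}\le_1\cdots$. Now apply Theorem~\ref{eqhigman}(iii) to the sequence $\{b_{i_m}\}_{m\ge 1}$ in $P_2$ and pass to a further infinite subsequence along which the second coordinates also ascend. Along this last subsequence both coordinates are nondecreasing, so the corresponding subsequence of $\{(a_i,b_i)\}$ is ascending in the product order; thus $P$ satisfies condition~(iii) and has the f.b.p., and again induction on $k$ finishes the proof.

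There is essentially no serious obstacle here; the only genuine choice is to work with the ``ascending subsequence'' form of the f.b.p.\ (condition~(iii)) rather than with the definition via closed subsets, since a direct argument producing a finite basis for a closed subset of a disjoint union or of a product is noticeably clumsier. It is also worth noting that the two-stage extraction in part (ii) is exactly what confines the statement to \emph{finitely many} factors: the same scheme does not survive for an infinite Cartesian product, where well-quasi-orderedness genuinely can fail.
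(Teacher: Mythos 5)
Your argument is correct, and it is the standard one. Note that the paper does not actually prove this proposition: it is stated with the attribution to Higman and cited to \cite{Hig}, so there is no in-paper proof against which to compare a ``route.'' Your reduction to condition~(iii) of Theorem~\ref{eqhigman} is exactly the usual Ramsey-flavored proof, and both the pigeonhole step for the disjoint union and the two-stage subsequence extraction for the product are sound. Two small remarks: for part~(i) the pigeonhole argument already handles arbitrary finite $k$ in one step, so the reduction to $k=2$ is unnecessary there (harmless, but slightly roundabout); and your closing observation that the product argument is confined to finitely many factors is a good sanity check, since $\mathbb{N}^{\mathbb{N}}$ with the componentwise order indeed fails to be well-quasi-ordered.
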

\begin{exam}\label{odernk}
\textnormal{
Let $k$ be a positive integer and consider $\mathbb{N}$ endowed with its natural order $\leq$. Then by Proposition \ref{proordem},  
$\mathbb{N}^k$ is partially well ordered with the order
\[
(n_1,n_2,\ldots,n_k)\leq'_k (m_1,m_2,\ldots, m_k) \text{ if } n_i\leq m_i
\text{ for every } i\in\{1,\ldots,k\}.
\]
Theorem \ref{DP} then implies that $(D(\mathbb{N}^k),\leq_k)$ has f.b.p. where the order $\leq_k$ is defined in the following way:
$(p_1,\ldots, p_n) \leq_k (q_1,\ldots, q_s)$ if there exists an order preserving injection
$\varphi\colon \mathbb{N}\rightarrow\mathbb{N}$ such that $\varphi(n)\leq s$ and $p_i\leq'_k q_{\varphi(i)}$ for any $i=1$, \dots, $n$.}
\end{exam}
\section{The graded identities of $T_{\mathbb{Z}_3}(UT_3(F))$}

Let $F$ be an infinite field and let $L=UT_n(F)^{(-)}$ be the Lie algebra of the $n\times n$ upper triangular matrices over $F$. If $a$, $b\in L$, denote  the commutator $[a,b]=ab-ba$. The Lie brackets are asssumed left normed, that is, $[a,b,c]=[[a,b],c]$. Denote by $e_{ij}$  the matrix units, with 1  at position $(i,j)$ and 0 elsewhere. 

We consider a particular but important grading on $L$.
Fix $G=\mathbb{Z}_n$ in additive notation, then the algebra $L$ is $G$-graded by setting $L=\oplus_{k=0}^{n-1}L_k$ where $L_k$ is the span of all $e_{ij}$ such that $j-i=k$. Recall that $e_{ij}$ is the matrix unit that has 1 at position $(i,j)$ and zero elsewhere. This grading is called the \textsl{canonical $\mathbb{Z}_n$-grading}. The main goal of this and the following section is to prove that the $T_{\mathbb{Z}_n}$-ideal of graded identities of $L$ has the Specht property when $\ch F\geq n$ (or $\ch F=0$).

The $T_{\mathbb{Z}_n}$-ideal of graded identities of $L$ over an infinite field was described in \cite{KY}.
\begin{theo}[\cite{KY}]\label{idutnf}
The $\mathbb{Z}_n$-graded identities of the Lie algebra $L$ of the upper triangular $n\times n$ matrices over $F$ follow from
\begin{equation}
\left\{ \begin{array}{cc}
             [x_1^{(i)},x_2^{(j)}], &   i+j\geq n \\     
            {[}x_1^{(0)}, x_2^{(0)}{]} \end{array}
   \right.
\end{equation}
\end{theo}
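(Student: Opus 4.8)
The plan is to show that the $T_{\mathbb{Z}_n}$-ideal $I=\langle S\rangle_{T_{\mathbb{Z}_n}}$ generated by the set $S$ of polynomials in $(1)$ equals $T_{\mathbb{Z}_n}(L)$. The inclusion $I\subseteq T_{\mathbb{Z}_n}(L)$ is routine: if $a=e_{s,s+p}\in L_p$ and $b=e_{t,t+q}\in L_q$ with $p+q\ge n$ (integer representatives in $\{0,\dots,n-1\}$), then $ab\neq 0$ would force $t=s+p$, whence $n\ge t+q=s+p+q\ge n+1$, a contradiction; so by linearity $[x_1^{(p)},x_2^{(q)}]\in T_{\mathbb{Z}_n}(L)$ whenever $p+q\ge n$, and $L_0=\spn\{e_{11},\dots,e_{nn}\}$ is an abelian subalgebra, so $[x_1^{(0)},x_2^{(0)}]\in T_{\mathbb{Z}_n}(L)$ too. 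For the converse, since $F$ is infinite both $I$ and $T_{\mathbb{Z}_n}(L)$ are generated by their multihomogeneous components, so it suffices to produce a set $\mathcal B$ of graded monomials whose images span $\mathcal L\langle X_{\mathbb Z_n}\rangle/I$ and which is linearly independent modulo $T_{\mathbb Z_n}(L)$; these two facts force $I=T_{\mathbb Z_n}(L)$.

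\emph{A spanning set modulo $I$.} Anticommutativity together with the Jacobi identity $[u,v,w]=[u,w,v]+[u,[v,w]]$ reduce every graded Lie polynomial to a combination of left-normed commutators $[x_{i_0}^{(g_0)},\dots,x_{i_k}^{(g_k)}]$. The relations of $I$ are then applied as rewriting rules. The identity $[x^{(0)},x^{(0)}]=0$ kills every commutator built from degree-$0$ variables only and, via the Jacobi rewriting, allows one to sort into non-decreasing index order any block of consecutive degree-$0$ entries occurring after the first position; a Hall-type collection process then brings a commutator into a normal form in which the entries of nonzero degree occur in a prescribed order and the degree-$0$ entries are distributed, sorted, among the gaps between them. (This interleaving cannot be avoided, because on $L$ a degree-$0$ variable inserted into a commutator rescales the current partial product by a weight that depends on the place of insertion.) The identities $[x^{(i)},x^{(j)}]=0$, $i+j\ge n$, then discard every normal-form commutator in which some partial sum of the integer degrees of the nonzero-degree entries exceeds $n-1$. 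The monomials that survive --- the single variables $x_i^{(g)}$ and the ``admissible'' left-normed commutators --- form the set $\mathcal B$, and by construction their images span $\mathcal L\langle X_{\mathbb Z_n}\rangle/I$.

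\emph{Linear independence modulo $T_{\mathbb Z_n}(L)$.} One checks that the admissible monomials take $F$-linearly independent values on $L$. For a given multidegree, evaluate each nonzero-degree variable at a matrix unit $e_{a,a+g}$ and each degree-$0$ variable at a diagonal matrix with its own indeterminate entries; because $F$ is infinite, it is enough to carry this out with indeterminate entries. With the matrix units chosen so that the chain of a fixed admissible monomial telescopes to a single $e_{1,m}$, the $e_{1,m}$-component of the evaluation of a linear combination of admissible monomials is a linear combination of pairwise distinct products of linear forms in the indeterminates --- one distinct product for each admissible monomial, read off from where its degree-$0$ variables sit --- so all coefficients must vanish. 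Hence $\mathcal B$ is linearly independent modulo $T_{\mathbb Z_n}(L)$, and therefore $I=T_{\mathbb Z_n}(L)$: indeed, any $f\in T_{\mathbb Z_n}(L)$ is congruent modulo $I$ to a combination $g$ of admissible monomials, $g\in T_{\mathbb Z_n}(L)$, so $g=0$ and $f\in I$.

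The step I expect to be the main obstacle is the construction of the normal form: one must show that the collection process terminates in a genuinely canonical family, keeping careful track both of the order of the nonzero-degree entries and of the degree-$0$ entries within each gap, and verifying that no rewriting ever reintroduces an inadmissible commutator; the independence step is then a finite, if intricate, computation with matrix units and Vandermonde-type determinants over the infinite field $F$. (Alternatively, the statement may be derived from the known description of the $\mathbb Z_n$-graded identities of the associative algebra $UT_n(F)$ via the graded weak identities of the pair $\bigl(UT_n(F),L\bigr)$.)
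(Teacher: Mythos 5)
The paper itself contains no proof of this statement: Theorem \ref{idutnf} is quoted from \cite{KY}, so your argument can only be measured against the techniques the paper does include. Your plan is the standard two-step argument and is essentially correct: the easy inclusion is verified exactly as you do it; modulo $I$ one exhibits a spanning family of left-normed monomials (one nonzero-degree variable first, degree-zero variables sorted inside each block between consecutive nonzero-degree entries, total degree at most $n-1$); and one proves these monomials independent modulo $T_{\mathbb{Z}_n}(L)$ by evaluating the nonzero-degree variables at a staircase of matrix units and the degree-zero variables at generic diagonal matrices. The independence half of your sketch is precisely what the paper carries out for its own purposes in Lemmas \ref{liut3}, \ref{liut3h} and Propositions \ref{liutn}, \ref{liutnh}.

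Two places need tightening, though neither is fatal. First, the normal form: the nonzero-degree entries cannot be put into a ``prescribed'' order modulo $I$; beyond normalizing the first position (via antisymmetry, since two leading degree-zero variables kill the monomial), their relative order is part of the data of the monomial, and monomials differing in that order are independent --- compare the sets $\mathcal{B}_{(g_1,\ldots,g_k)}$ in the paper, which carry a permutation $\sigma$. The collection itself is simpler than a Hall-type process: degree-zero variables may only be sorted within a block (sorting across a nonzero-degree entry is obstructed, as you note), and a monomial dies modulo $I$ exactly when some partial sum of the integer degrees of its nonzero-degree entries reaches $n$, the first such partial sum being an instance of $[x_1^{(i)},x_2^{(j)}]$ with $i,j\le n-1$, $i+j\ge n$. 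Second, ``pairwise distinct products of linear forms'' is not by itself a reason for linear independence (distinct products of linear forms can be linearly dependent); as in Lemma \ref{liut3h} and Proposition \ref{liutnh}, one should specialize the $e_{11}$-entries of the diagonal substitutions to zero so that the surviving evaluations become pairwise distinct monomials in the commuting indeterminates, and use the staircase substitution, chosen per ordering of the nonzero-degree variables, to annihilate all admissible monomials with a different ordering. With these repairs your outline gives a complete proof along the same lines as \cite{KY}.
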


\subsection{The case $UT_2(F)^{(-)}$}
In this short subsection we treat the case $n=2$. Let $Y$ and $Z$ be two infinite countable sets, $Y=\{y_1, y_2,\ldots \}$ and $Z=\{z_1,z_2,\ldots\}$. The free Lie algebra $\mathcal{L}(Y\cup Z)$ freely generated over $F$ by $Y\cup Z$ has a natural $\mathbb{Z}_2$-grading assuming the variables in $Y$ to be of degree  $0$ and those in $Z$ of degree $1$. Consider the algebra $UT_2(F)^{(-)}$  endowed with the canonical $\mathbb{Z}_2$-grading, and denote by $I$ the ideal  of $\mathbb{Z}_{2}$-graded identities for $UT_2(F)$.  By  Theorem \ref{idutnf} and the Jacobi identity, we get
\begin{equation}\label{ycomutan}
[z,y_1,y_2]-[z,y_2,y_1]\in I,
\end{equation}
hence the non-zero monomials in $L(Y\cup Z)/I$ are of the type
\[
[z,y_{i_1},y_{i_2},\ldots,y_{i_k}]
\]
where $i_1\leq i_2\leq\cdots\leq i_k$.

We denote $[z,\underbrace{y_1,\ldots,y_1}_{a_1},\ldots,\underbrace{y_n,\ldots,y_n}_{a_n}]$ by $[z,a_1y_1,\ldots,a_ny_n]$

Consider the following set
\[
\mathcal{B}=\{[z,a_1y_1,\ldots,a_ny_n]| n\geq 1,\ a_i>0, \text{ for every $i$}\}.
\]
Given $f=[z,a_1y_1,\ldots,a_ny_n] \in\mathcal{B}$, we define $V_f=(a_1,\ldots, a_n)$. Observe that $\{V_f\mid f\in \mathcal{B}\} \subseteq D(\mathbb{N})$, hence $V_f=(a_1,\ldots, a_n)\leq V_g=(a'_1,\ldots, a'_m)$ if there exists an order preserving injection $\varphi\colon \mathbb{N}\to\mathbb{N}$ such that $\varphi(n)\leq m$ and $a_i\leq a'_{\varphi(i)}$ for every $i=1$, \dots, $n$.
\begin{prop}\label{conseut2}
Let $f$, $g\in\mathcal{B}$. If $V_f\leq V_g$ then $g$ is a consequence of $f$ modulo $I$, that is $g\in\langle f\cup I\rangle_{T_{\mathbb{Z}_2}}$.
\end{prop}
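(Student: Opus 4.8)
The plan is to realize $g$, modulo $I$, as an element obtained from $f$ by first applying a graded endomorphism of $\mathcal{L}(Y\cup Z)$ that encodes the injection $\varphi$, then bracketing on the right by suitable degree-$0$ variables to raise the exponents, and finally rearranging the resulting monomial using that the $y$-variables commute modulo $I$.

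Write $f=[z,a_1y_1,\dots,a_ny_n]$ and $g=[z,a'_1y_1,\dots,a'_my_m]$, and let $\varphi\colon\{1,\dots,n\}\to\{1,\dots,m\}$ be the order-preserving injection with $a_i\le a'_{\varphi(i)}$ for every $i$. First I would consider the graded endomorphism $\psi$ of $\mathcal{L}(Y\cup Z)$ with $\psi(z)=z$ and $\psi(y_i)=y_{\varphi(i)}$, and $\psi$ the identity on the remaining variables; since $\psi$ respects the $\mathbb{Z}_2$-grading, $\psi(f)\in\langle f\cup I\rangle_{T_{\mathbb{Z}_2}}$. Because $\varphi$ is strictly increasing, $\psi(f)=[z,a_1y_{\varphi(1)},\dots,a_ny_{\varphi(n)}]$, that is, the monomial in which $y_{\varphi(i)}$ occurs $a_i$ times and every other $y_k$ does not occur.

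Next, since $\langle f\cup I\rangle_{T_{\mathbb{Z}_2}}$ is in particular an ideal of $\mathcal{L}(Y\cup Z)$, I can bracket $\psi(f)$ on the right by degree-$0$ variables without leaving it: for each $k\in\{1,\dots,m\}$ put $b_k=a'_k-a_i$ if $k=\varphi(i)$ and $b_k=a'_k$ otherwise. These integers are all $\ge 0$, by the hypothesis $a_i\le a'_{\varphi(i)}$ and because $a'_k>0$. Appending $b_k$ copies of $y_k$, for $k=1,\dots,m$, to $\psi(f)$ (in any order) produces a monomial $h\in\langle f\cup I\rangle_{T_{\mathbb{Z}_2}}$ in which $z$ occurs once and each $y_k$ occurs exactly $a'_k$ times, although the $y$'s need not appear in nondecreasing order. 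Finally I would use \eqref{ycomutan} — more precisely the consequence, obtained from it via the Jacobi identity together with $[x_1^{(0)},x_2^{(0)}]\in I$, that $[w,y_a,y_b]\equiv [w,y_b,y_a]\pmod I$ for every $w$ — to reorder the $y$'s in $h$, which yields $h\equiv [z,a'_1y_1,\dots,a'_my_m]=g\pmod I$. Since $h\in\langle f\cup I\rangle_{T_{\mathbb{Z}_2}}$ and $I\subseteq\langle f\cup I\rangle_{T_{\mathbb{Z}_2}}$, we conclude $g\in\langle f\cup I\rangle_{T_{\mathbb{Z}_2}}$, as desired.

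There is no serious obstacle here: the argument is essentially bookkeeping, and the substantive input — that the $y$-slots of $[z,y_{i_1},\dots,y_{i_k}]$ can be permuted freely modulo $I$, because $[y_a,y_b]\in I$ and $I$ is an ideal, so $[w,[y_a,y_b]]\in I$ — is already available. The only point that genuinely requires the hypothesis $V_f\le V_g$ (rather than just the existence of some injection) is the nonnegativity of the padding exponents $b_k$.
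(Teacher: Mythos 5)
Your proof is correct and follows essentially the same route as the paper's: substitute $y_j\mapsto y_{\varphi(j)}$ to realize the subsequence, bracket on the right by degree-$0$ variables to pad the exponents (where $V_f\le V_g$ guarantees the padding amounts are nonnegative), and reorder the $y$-slots using $[z,y_a,y_b]\equiv[z,y_b,y_a]\pmod I$. The paper merely organizes the padding in two stages (first the variables $y_{i_j}$, then the remaining $y_{l_j}$) whereas you do it in one pass followed by a single reordering, but the underlying argument is the same.
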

\begin{proof}
Suppose that $V_f=(a_1,\ldots, a_n)$ and  $V_g=(a'_1,\ldots, a'_m)$. By hypothesis, there is a subsequence $(a'_{i_1},\ldots, a'_{i_n})$ of $V_g$ such that $a_j\leq a'_{i_j}$ for every $j\in\{1,\ldots,n\}$.
Let $f(z,y_{i_1},\ldots, y_{i_n})$ be the polynomial obtained by replacing the variable $y_j$ by $y_{i_j}$ for each $1\leq j\leq n$ in $f(z,y_1,\ldots,y_n)$. Then by equality (\ref{ycomutan}), we get 
\[
[f(z,y_{i_1}, \ldots, y_{i_n}), (a'_{i_1}-a_1)y_{i_1},\ldots,(a' _{i_n}-a_n)y_{i_n}]\equiv [z,a'_{i_1}y_{i_1},\ldots, a'_{i_n}y_{i_n}] \pmod{I}.
\]
If $\{l_1,\ldots, l_{m-n}\}=\{1,\ldots,m\}\setminus\{i_1,\ldots,i_m\}$ by using once again (\ref{ycomutan}), we get
\[
[z,a'_{i_1}y_{i_1},\ldots, a'_{i_n}y_{i_n},a'_{l_1}y_{l_1},\ldots, a'_{l_{m-n}}y_{l_{m-n}} ]\equiv g \pmod{I}.\qedhere
\]
\end{proof}

\begin{defi}\label{oderinB}
We define a quasi-order in $\mathcal{B}$ by $f\leq_{\mathcal{B}} g$  if $V_f\leq V_g$, for $f$, $g\in\mathcal{B}$.
\end{defi}
\begin{prop}\label{Ut2fbp}
The set $(\mathcal{B}, \leq_{\mathcal{B}})$ satisfies the f.b.p.
\end{prop}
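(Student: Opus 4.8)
The plan is to transfer the problem to a statement about a well-quasi-ordered set that we already control, namely $D(\mathbb{N})$ with Higman's order, and then invoke Higman's theorem (Theorem~\ref{DP}). Recall that by Definition~\ref{oderinB} the quasi-order on $\mathcal{B}$ is defined by $f \leq_{\mathcal{B}} g$ precisely when $V_f \leq V_g$ in $D(\mathbb{N})$, where $V_f = (a_1,\dots,a_n)$ for $f = [z,a_1 y_1,\dots,a_n y_n]$. Since every $a_i$ is a positive integer, each $V_f$ lies in $D(\mathbb{N})$, and conversely every finite sequence of positive integers arises as $V_f$ for a unique $f \in \mathcal{B}$. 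Hence the assignment $f \mapsto V_f$ is a bijection between $\mathcal{B}$ and the set of all finite sequences of \emph{positive} integers, and by construction it is an isomorphism of quasi-ordered sets onto its image.

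First I would note that $\mathbb{N}$ (the positive integers) with its natural order is well-ordered, hence trivially has the f.b.p.\ (the descending chain condition holds and there are no incomparable elements). Then I would apply Theorem~\ref{DP} with $P = \mathbb{N}$: it gives directly that $D(\mathbb{N})$, the set of all finite sequences of positive integers equipped with Higman's subsequence-domination order, satisfies the f.b.p. Since $(\mathcal{B},\leq_{\mathcal{B}})$ is order-isomorphic to a subset of $D(\mathbb{N})$ — in fact to all of it, up to the harmless convention about the empty sequence — and the f.b.p.\ is inherited by subsets (an easy consequence of the characterization in Theorem~\ref{eqhigman}(iii) or (iv): an infinite strictly descending chain or infinite antichain inside the subset is also one inside the ambient set), we conclude that $(\mathcal{B},\leq_{\mathcal{B}})$ has the f.b.p.

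The only point requiring a word of care is the bookkeeping identifying $\mathcal{B}$ with (a subset of) $D(\mathbb{N})$ compatibly with the orders; this is immediate from the definitions of $V_f$ and of $\leq_{\mathcal{B}}$, so there is essentially no obstacle here. I do not expect any genuine difficulty: the real content was already isolated in Higman's theorem and in the elementary observation that the positive integers are well-ordered. (If one prefers, one can also cite Example~\ref{odernk} with $k=1$, which records exactly that $D(\mathbb{N})$ has the f.b.p.)
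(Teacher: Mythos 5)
Your proof is correct and follows essentially the same route as the paper: identify $\mathcal{B}$ with (a subset of) $D(\mathbb{N})$ via $f\mapsto V_f$, observe this is order-preserving, and conclude by Higman's theorem (Theorem~\ref{DP}) together with the characterization of f.b.p.\ in Theorem~\ref{eqhigman}. Your version is in fact marginally more careful, since you note explicitly that both infinite descending chains and infinite antichains transfer, whereas the paper's proof only rules out antichains.
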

\begin{proof}
Suppose that there is an infinite sequence $\{f_i\}_{i\geq 0}$ of  pairwise incomparable elements in $\mathcal{B}$ with respect to the order $\leq_{\mathcal{B}}$. This sequence leads to the sequence $\{V_{f_i}\}_{i\geq 0}$ in $D(\mathbb{N})$, by definition \ref{oderinB}. The elements of the sequence $\{V_{f_i}\}_{i\geq 0}$ are pairwise incomparable, but this contradicts Theorem \ref{eqhigman} because $D(\mathbb{N})$ is partially-well ordered.
\end{proof}
\begin{theo}
Let $J$ be a $T_{\mathbb{Z}_2}$-ideal such that $I\subseteq J$, then $J$ is finitely generated as a $T_{\mathbb{Z}_2}$-ideal.
\end{theo}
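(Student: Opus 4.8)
The plan is to push everything down to the quotient $\mathcal{L}(Y\cup Z)/I$, show that $J$ modulo $I$ is spanned by monomials coming from $\mathcal{B}$, and then combine Proposition \ref{conseut2} with Higman's theorem applied to $(\mathcal{B},\leq_{\mathcal{B}})$.

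First I would pass to $\bar J=J/I\subseteq\mathcal{L}(Y\cup Z)/I$. Since $F$ is infinite, $J$ is spanned as a vector space by its multihomogeneous elements (the usual Vandermonde argument, scaling the variables one at a time). Next, by the commutativity relation (\ref{ycomutan}) and the description of the nonzero monomials of $\mathcal{L}(Y\cup Z)/I$ recalled above, any nonzero monomial is congruent modulo $I$, up to sign, to a canonical monomial $[z,a_1y_1,\ldots,a_ny_n]$; in particular, two monomials of the same multidegree (in the individual variables) are congruent modulo $I$ up to a sign, and a monomial whose multidegree does not have exactly one occurrence of a single $z$-variable lies in $I$. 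Hence every multihomogeneous component of $\mathcal{L}(Y\cup Z)/I$ is at most one-dimensional, spanned by the image of an element of $\mathcal{B}$ after renaming the $z$-variable occurring to $z$ and the $y$-variables occurring to $y_1,\ldots,y_n$. Consequently, if $S=\{f\in\mathcal{B}\mid f\in J\}$, then a multihomogeneous element of $J$ maps to a scalar multiple of $\bar f$ for some $f\in S$, so $\bar J=\mathrm{span}\{\bar f\mid f\in S\}$, and therefore $J=\langle I\cup S\rangle_{T_{\mathbb{Z}_2}}$.

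Then I would invoke the finite basis property. By Proposition \ref{Ut2fbp} the quasi-ordered set $(\mathcal{B},\leq_{\mathcal{B}})$ has the f.b.p., so by condition (ii) of Theorem \ref{eqhigman} there is a finite $S_0\subseteq S$ with $S\subseteq\overline{S_0}$, the closure being taken with respect to $\leq_{\mathcal{B}}$. For each $g\in S$ pick $f\in S_0$ with $f\leq_{\mathcal{B}}g$, i.e. $V_f\leq V_g$; Proposition \ref{conseut2} then gives $g\in\langle f\cup I\rangle_{T_{\mathbb{Z}_2}}\subseteq\langle I\cup S_0\rangle_{T_{\mathbb{Z}_2}}$. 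Thus $S\subseteq\langle I\cup S_0\rangle_{T_{\mathbb{Z}_2}}$, whence $J=\langle I\cup S\rangle_{T_{\mathbb{Z}_2}}=\langle I\cup S_0\rangle_{T_{\mathbb{Z}_2}}$. Since $I$ is finitely generated as a $T_{\mathbb{Z}_2}$-ideal by Theorem \ref{idutnf} and $S_0$ is finite, $J$ is finitely generated, as claimed.

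The proof is short precisely because the two substantive ingredients are already available: Proposition \ref{conseut2} (each polynomial in $\mathcal{B}$ controls all its $\leq_{\mathcal{B}}$-successors) and Proposition \ref{Ut2fbp} ($\mathcal{B}$ is well-quasi-ordered). The only step demanding some care is the reduction to $\mathcal{B}$, that is, verifying that modulo $I$ every multihomogeneous component is one-dimensional so that a multihomogeneous element of $J$ genuinely forces a canonical monomial of $\mathcal{B}$ into $J$; this is where one must use both (\ref{ycomutan}) and the vanishing modulo $I$ of monomials with more than one (or no) $z$-variable. I do not expect a real obstacle there, only bookkeeping.
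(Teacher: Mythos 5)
Your proof is correct and follows essentially the same route as the paper: reduce to multihomogeneous polynomials (valid since $F$ is infinite), observe that modulo $I$ each such polynomial is, after renaming variables, a scalar multiple of an element of $\mathcal{B}$, and then combine Proposition \ref{conseut2} with the f.b.p.\ of $(\mathcal{B},\leq_{\mathcal{B}})$ via Theorem \ref{eqhigman} to extract a finite generating subset. The extra step you spell out — that every multihomogeneous component of $\mathcal{L}(Y\cup Z)/I$ is at most one-dimensional, so a multihomogeneous element of $J$ genuinely forces a member of $\mathcal{B}$ into $J$ — is left implicit in the paper's sentence producing $\mathcal{B}'\subseteq\mathcal{B}$, but it is the same idea.
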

\begin{proof}
Since $F$  is an infinite field, we know that $J$ is generated as $T_{\mathbb{Z}_2}$-ideal by its multihomogeneous polynomials. Hence there exists a subset $\mathcal{B}'\subseteq\mathcal{B}$ such that 
\[
J=\langle \mathcal{B}'\rangle_{T_{\mathbb{Z}_2}} \pmod{I}.
\]
By Proposition \ref{Ut2fbp} and Theorem \ref{eqhigman}, there exists $\mathcal{B}_0\subseteq \mathcal{B}'$ such that $\mathcal{B}_{0}$ is finite and $\mathcal{B}_{0}\subseteq \mathcal{B}'\subseteq\overline{\mathcal{B}}_0$. 
It follows that given $g\in\mathcal{B}'$, there exists $f\in\mathcal{B}_0$ such that $V_f\leq V_g$. By Proposition \ref{conseut2}, $g\in\langle f\rangle_{T_{\mathbb{Z}_2}}\subseteq \langle\mathcal{B}_0\rangle_{T_{\mathbb{Z}_2}}\pmod{I}$, then
\[
J= \langle \mathcal{B}_0\rangle_{T_{\mathbb{Z}_2}} \pmod{I}. 
\]
Hence the $T_{\mathbb{Z}_2}$-ideal $J$ is finitely generated.
\end{proof}

\subsection{The case $UT_3(F)^{(-)}$}

Now we consider separately the case of $3\times 3$ upper triangular matrices. 
Let $Y$, $Z$ and $W$ be infinite countable sets, namely $Y=\{y_1, y_2,\ldots \}$, $Z=\{z_1,z_2,\ldots\}$ and $W=\{w_1,w_2,\ldots\}$. Consider the free Lie algebra $\mathcal{L}(Y\cup Z\cup W)$ generated over $F$ by $Y\cup Z\cup W$. Then $\mathcal{L}(Y\cup Z\cup W)$ has a natural $\mathbb{Z}_3$-grading assuming the variables $Y$, $Z$ and $W$ to be of degree 0, 1 and 2, respectively. The algebra $UT_3(F)$ is endowed with the canonical $\mathbb{Z}_3$-grading, and we denote by $I$ the ideal  of $\mathbb{Z}_{3}$-graded identities for $UT_3(F)$.

By  Theorem \ref{idutnf}, the  $\mathbb{Z}_{3}$-graded identities for $UT_3(F)$ follow from 
\begin{align}
             &[y_1,y_2],  \nonumber  \\  
\label{idut3} &\textcolor{red}{[z_1,z_2,z_3],}\\
             &{[}w_1,w_2{]}, \nonumber  \\
            &{[}z,w{]}\nonumber.  
            \end{align}
Using the previous identities and the Jacobi identity, we have the following equalities modulo $I$.
\begin{align}
\label{zcomutanUT3}
[z,y_1,y_2]&=[z,y_2,y_1],\\
\label{wcomutanUT3}
[w,y_1,y_2]&=[w,y_2,y_1],\\
\label{zwcomutanUT3}
[z_1,y_1,y_2, z_2 ]&=[z_1,y_2,y_1, z_2].
\end{align}
Observe that (\ref{zwcomutanUT3}) follows from (\ref{zcomutanUT3}); as we shall need frequently the former equality we chose to state it explicitly. 
Hence the non-zero monomials in $\mathcal{L}(Y\cup Z\cup W)/I$ are of the following types:
\begin{enumerate}[label=\Roman*.]
    \item $[z,y_{i_1},\ldots, y_{i_k}]$,
    \item  $[w,y_{i_1},\ldots, y_{i_k}]$,
    \item  $[z,y_{i_1},\ldots, y_{i_k}, z, y_{j_1},\ldots, y_{j_l}]$,
    \item   $[z_1,y_{i_1},\ldots, y_{i_k}, z_2, y_{j_1},\ldots, y_{j_l}]$,
\end{enumerate}
where $i_1\leq i_2\leq\cdots\leq i_k$ and $j_1\leq i_2\leq\cdots\leq j_l$.
Observe that if $f$ is a monomial of type III or IV, then its first and/or second block of variables $Y$ may be  empty

Let $f\in \mathcal{L}(Y\cup Z\cup W)/I$ be a multihomogeneous polynomial, then $f$ is equivalent to a monomial of type I or II or $f$ is a linear combinations of monomials of  type III or type IV.\\

\begin{notation}
\begin{align*}
[z,a_1y_1,\ldots,a_ny_n]&= [z,\underbrace{y_1,\ldots,y_1}_{a_1},\ldots,\underbrace{y_n,\ldots,y_n}_{a_n}]\\
[w,a_1y_1,\ldots,a_ny_n]&= [w,\underbrace{y_1,\ldots,y_1}_{a_1},\ldots,\underbrace{y_n,\ldots,y_n}_{a_n}]\\
[z_1,a_1y_1,\ldots,a_ny_n, z_2, b_1y_1,\ldots,b_ny_n]& 
=[z_1,\underbrace{y_1,\ldots,y_1}_{a_1},\ldots,\underbrace{y_n,\ldots,y_n}_{a_n}, z_2, \\
&\underbrace{y_1,\ldots,y_1}_{b_1},\ldots, \underbrace{y_n,\ldots, y_n}_{b_n}]
\end{align*}
\end{notation}
\indent
Consider the following sets
\begin{align*}
\mathcal{B}_1&=\{[z,a_1y_1,\ldots,a_ny_n]\mid n\geq 1,\ a_i> 0, \text{ for all $i$}\},\\
\mathcal{B}_2&=\{[w,a_1y_1,\ldots,a_ny_n]\mid n\geq 1,\ a_i>0,\ \text{ for all $i$}\}, \\
\mathcal{B}_{(1,1)}&=\{[z_1,a_1y_1,\ldots,a_ny_n, z_2, b_1y_1,\ldots,b_ny_n]\mid n\geq 0,\ a_i,b_i\geq 0,\ \text{ for all $i$}\}.
\end{align*}
Given $f\in\mathcal{B}_i$, we assign to it the finite sequence $V_f=(a_1,\ldots, a_n)$ where $1\leq i\leq 2$. Note that $V_f\in D(\mathbb{N})$. So, we define the following order in $f\in\mathcal{B}_i$.
\begin{defi}
Let $f$, $g\in\mathcal{B}_i$ where $i=1$ or 2, we define $f\leq_{\mathcal{B}_i} g$ whenever $V_f\leq V_g$.
\end{defi}

\begin{prop}\label{fbporUT3}
Consider the set $\mathcal{B}_i$ and let $f$, $g\in  \mathcal{B}_i$.
\begin{enumerate}[label=\roman*.]
\item If $f\leq_{\mathcal{B}_i} g$, then $g$ is a consequence of $f$ modulo $I$.
\item $(\mathcal{B}_i, \leq_{\mathcal{B}_i})$ satisfies the f.b.p.
\end{enumerate}
\end{prop}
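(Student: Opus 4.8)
The plan is to treat both items exactly as in the case $n=2$ (Propositions~\ref{conseut2} and~\ref{Ut2fbp}), since the monomials in $\mathcal{B}_1$ and in $\mathcal{B}_2$ differ from those in $\mathcal{B}$ only in the ``head'' letter ($z$ of degree $1$, resp.\ $w$ of degree $2$), and in both cases the $y$-variables can be permuted freely modulo $I$, by \eqref{zcomutanUT3} for $i=1$ and by \eqref{wcomutanUT3} for $i=2$.

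For part (i), write $V_f=(a_1,\ldots,a_n)$ and $V_g=(a_1',\ldots,a_m')$, and choose an order-preserving injection $\varphi\colon\{1,\ldots,n\}\to\{1,\ldots,m\}$ with $a_j\le a'_{\varphi(j)}$ for all $j$; such a $\varphi$ exists because $V_f\le V_g$ in $D(\mathbb{N})$. Since $T_{\mathbb{Z}_3}(UT_3(F))$ is invariant under graded endomorphisms, replacing $y_j$ by $y_{\varphi(j)}$ in $f$ yields, modulo $I$, the monomial $[\ast,\,a_1 y_{\varphi(1)},\ldots,a_n y_{\varphi(n)}]$, where $\ast$ is $z$ or $w$ according to whether $i=1$ or $i=2$. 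Now bracket this element on the right with $(a'_{\varphi(j)}-a_j)$ further copies of $y_{\varphi(j)}$ for each $j$, and then with $a'_l$ copies of $y_l$ for each $l\in\{1,\ldots,m\}\setminus\varphi(\{1,\ldots,n\})$; using \eqref{zcomutanUT3} (resp.\ \eqref{wcomutanUT3}) to sort the $y$-variables into non-decreasing order, the result equals $g$ modulo $I$. Hence $g\in\langle f\cup I\rangle_{T_{\mathbb{Z}_3}}$. This is the same computation as in the proof of Proposition~\ref{conseut2}, simply carried out with the head letter $z$ or $w$ kept in place throughout.

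For part (ii), the assignment $f\mapsto V_f$ is, by construction, an order-preserving injection of $(\mathcal{B}_i,\le_{\mathcal{B}_i})$ into $D(\mathbb{N})$: one has $f\le_{\mathcal{B}_i} g$ if and only if $V_f\le V_g$. Since $(\mathbb{N},\le)$ is well ordered, it has the f.b.p., so $D(\mathbb{N})$ has the f.b.p.\ by Theorem~\ref{DP}. Consequently, an infinite antichain, or an infinite strictly descending chain, in $\mathcal{B}_i$ would produce one of the same kind in $D(\mathbb{N})$ under $f\mapsto V_f$, contradicting Theorem~\ref{eqhigman}(iv). Therefore $(\mathcal{B}_i,\le_{\mathcal{B}_i})$ satisfies the f.b.p. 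There is essentially no obstacle in this argument: the only point requiring a (trivial) check is that in part (i) the rearrangement of the $y$-variables never touches the head letter, which is immediate from the shape of the elements of $\mathcal{B}_1$ and $\mathcal{B}_2$ and from the commutation relations \eqref{zcomutanUT3}--\eqref{wcomutanUT3}; beyond that, the proof is a transcription of the $n=2$ case.
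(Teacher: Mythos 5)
Your proof is correct and is essentially the argument the paper intends: the paper simply states that the proof is analogous to Propositions~\ref{conseut2} and~\ref{Ut2fbp}, and you have carried out that analogy faithfully, using \eqref{zcomutanUT3} and \eqref{wcomutanUT3} in place of \eqref{ycomutan} and the same embedding $f\mapsto V_f$ into $D(\mathbb{N})$ together with Theorems~\ref{eqhigman} and~\ref{DP}.
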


\begin{proof}
The argument is analogous to the proofs of Propositions \ref{conseut2} and \ref{Ut2fbp}.
\end{proof}

\begin{cor}\label{Bifinito}
Let $J$ be a $T_{\mathbb{Z}_3}$-ideal such that $I\subseteq J$ and let 
$\mathcal{A}_{i}=\{f\in J\mid f\in \mathcal{B}_{i}\}$, $i=1$, 2.
Then there exist finite subsets $\mathcal{A}' _{i}\subseteq \mathcal{A}_{i}$ such that 
\[
\langle \mathcal{A}_{i}\rangle_{T_{\mathbb{Z}_3}}= \langle \mathcal{A}'_{i}\rangle_{T_{\mathbb{Z}_3}}\pmod{I}.
\]
\end{cor}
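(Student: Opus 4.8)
The plan is to deduce the corollary directly from Proposition \ref{fbporUT3}, combining its part (ii) (the f.b.p.\ of $\mathcal{B}_i$) with its part (i) (the fact that $\leq_{\mathcal{B}_i}$ controls consequences modulo $I$). First I would fix $i\in\{1,2\}$ and regard $\mathcal{A}_i$ as a subset of the quasi-ordered set $(\mathcal{B}_i,\leq_{\mathcal{B}_i})$. Since this quasi-ordered set has the f.b.p.\ by Proposition \ref{fbporUT3}(ii), condition (ii) of Higman's Theorem \ref{eqhigman}, applied to the subset $Q=\mathcal{A}_i$ of $P=\mathcal{B}_i$, produces a finite subset $\mathcal{A}'_i\subseteq\mathcal{A}_i$ with $\mathcal{A}'_i\subseteq\mathcal{A}_i\subseteq\overline{\mathcal{A}'_i}$, where the closure is taken inside $\mathcal{B}_i$.

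Next, I would translate this combinatorial statement into the language of $T_{\mathbb{Z}_3}$-ideals. Let $g\in\mathcal{A}_i$ be arbitrary. The inclusion $g\in\overline{\mathcal{A}'_i}$ means there is some $f\in\mathcal{A}'_i$ with $f\leq_{\mathcal{B}_i} g$. By Proposition \ref{fbporUT3}(i), $g$ is then a consequence of $f$ modulo $I$, that is $g\in\langle f\cup I\rangle_{T_{\mathbb{Z}_3}}\subseteq\langle\mathcal{A}'_i\cup I\rangle_{T_{\mathbb{Z}_3}}$. As $g$ was arbitrary, this gives $\mathcal{A}_i\subseteq\langle\mathcal{A}'_i\rangle_{T_{\mathbb{Z}_3}}\pmod I$, hence $\langle\mathcal{A}_i\rangle_{T_{\mathbb{Z}_3}}\subseteq\langle\mathcal{A}'_i\rangle_{T_{\mathbb{Z}_3}}\pmod I$. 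The reverse inclusion is immediate from $\mathcal{A}'_i\subseteq\mathcal{A}_i$, so the two $T_{\mathbb{Z}_3}$-ideals coincide modulo $I$, as claimed.

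There is essentially no serious obstacle here, since the real work has already been carried out in Proposition \ref{fbporUT3}; the corollary is a bookkeeping step. The only point deserving a little care is to make sure that the closure $\overline{\mathcal{A}'_i}$ appearing in Higman's criterion is computed in the full poset $\mathcal{B}_i$, not merely within $\mathcal{A}_i$, so that the finitely many chosen elements $\mathcal{A}'_i$ genuinely dominate every member of $\mathcal{A}_i$ under $\leq_{\mathcal{B}_i}$ — this is precisely what condition (ii) of Theorem \ref{eqhigman} guarantees. One should also keep in mind the notational convention, fixed before Proposition \ref{conseut2}, that ``$g$ is a consequence of $f$ modulo $I$'' means $g\in\langle f\cup I\rangle_{T_{\mathbb{Z}_3}}$, which is exactly the form in which Proposition \ref{fbporUT3}(i) is stated.
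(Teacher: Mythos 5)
Your proof is correct and follows essentially the same route as the paper: apply Proposition \ref{fbporUT3}(ii) together with Higman's criterion (condition (ii) of Theorem \ref{eqhigman}) to extract a finite dominating subset $\mathcal{A}'_i\subseteq\mathcal{A}_i$, then use Proposition \ref{fbporUT3}(i) to pass from domination in $\leq_{\mathcal{B}_i}$ to consequence modulo $I$. The only difference is that you spell out the trivial reverse inclusion, which the paper leaves implicit.
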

\begin{proof}
By item ii. of Propostion \ref{fbporUT3} and Theorem \ref{eqhigman}, there exist finite subsets $\mathcal{A}'_{i}\subseteq \mathcal{A}_{i}$ where $i=1$, 2, such that 
\[
\mathcal{A}'_{i}\subseteq \mathcal{A}_{i}\subseteq\overline{\mathcal{A}'_{i}}.
\]
It follows that given $g\in\mathcal{A}_{i}$, there exists $f\in \mathcal{A}' _{i}$ such that $f\leq_{\mathcal{B}_i} g$. Hence, by item i. of Proposition \ref{fbporUT3}, $g$ is a consequence of $f$ modulo $I$. Then
\[
\langle \mathcal{A}_{i}\rangle_{T_{\mathbb{Z}_3}}= \langle \mathcal{A}'_{i}\rangle_{T_{\mathbb{Z}_3}} \pmod{I}. \qedhere
\]
\end{proof}
If $f\in\mathcal{B}_{(1,1)}$, we assign to it the finite sequence $V_f=((a_1,b_1),\ldots, (a_n,b_n))$. Observe that $V_f\in D({\mathbb{N}_{0}}^2)$.
Recall that  by Theorem \ref{DP} and Proposition \ref{proordem}, the set $D({\mathbb{N}_{0}}^2)$ is  partially well ordered.
\begin{defi}\label{qorderUT3}
Let $f$, $g$ be two polynomials in $\mathcal{B}_{(1,1)}$. Define the following quasi-order in $\mathcal{B}_{(1,1)}$: $f\leq_{\mathcal{B}_{(1,1)}} g$ whenever $V_f\leq_2 V_g$.
\end{defi}
\begin{prop}\label{conseUT3}
If $f\leq_{\mathcal{B}_{(1,1)}} g$, then $g\in\langle f\cup g \rangle_{T_{\mathbb{Z}_3}}$.
\end{prop}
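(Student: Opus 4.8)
We prove that $g$ is a consequence of $f$ modulo $I$, that is, $g\in\langle\{f\}\cup I\rangle_{T_{\mathbb{Z}_3}}$ (in analogy with Proposition~\ref{conseut2} and item~i of Proposition~\ref{fbporUT3}). The plan is to imitate the proof of Proposition~\ref{conseut2}: rename the $Y$-variables of $f$ according to the order-preserving injection witnessing $V_f\le_2 V_g$, and then enlarge the two $Y$-blocks one letter at a time by applying suitable graded endomorphisms to the resulting polynomial.

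Write $f=[z_1,a_1y_1,\ldots,a_ny_n,z_2,b_1y_1,\ldots,b_ny_n]$ and $g=[z_1,a'_1y_1,\ldots,a'_my_m,z_2,b'_1y_1,\ldots,b'_my_m]$, and let $\varphi\colon\{1,\ldots,n\}\to\{1,\ldots,m\}$ be the order-preserving injection with $a_i\le a'_{\varphi(i)}$ and $b_i\le b'_{\varphi(i)}$ for all $i$. First I would apply to $f$ the graded endomorphism fixing $z_1$ and $z_2$ and sending $y_j\mapsto y_{\varphi(j)}$; since $\varphi$ is injective and order preserving this produces a consequence $f_1=[z_1,c_1y_1,\ldots,c_my_m,z_2,d_1y_1,\ldots,d_my_m]$ of $f$ with $c_j\le a'_j$ and $d_j\le b'_j$ for every $j$ (and $c_j=d_j=0$ for the indices outside the image of $\varphi$). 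The crucial observation is that $z_1\mapsto[z_1,(a'_1-c_1)y_1,\ldots,(a'_m-c_m)y_m]$ is a legitimate graded substitution, since the right-hand side has $\mathbb{Z}_3$-degree $1$; applying it to $f_1$ inserts the missing letters $(a'_j-c_j)y_j$ into the first block immediately after $z_1$, and, using that any two $Y$-variables commute inside that block modulo $I$ — which follows from the Jacobi identity together with $[y_1,y_2]\in I$, exactly as (\ref{zcomutanUT3}) and (\ref{zwcomutanUT3}) were obtained — the first block can be reordered into nondecreasing form, giving a consequence $f_2=[z_1,a'_1y_1,\ldots,a'_my_m,z_2,d_1y_1,\ldots,d_my_m]$ of $f$. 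Finally, bracketing $f_2$ on the right with $(b'_1-d_1)y_1,\ldots,(b'_m-d_m)y_m$ and reordering the resulting second block by the same commutation relation (this time with $[z_1,a'_1y_1,\ldots,a'_my_m,z_2,\ldots]$ playing the role of the first letter) produces $g$, which is therefore a consequence of $f$ modulo $I$.

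I do not expect a genuine obstacle here; the two points to get right are that feeding arbitrarily many $Y$-letters into the first block is accomplished by the degree-preserving substitution $z_1\mapsto[z_1,\ldots]$ — the second block being handled by plain right multiplication — and that the commutation of $Y$-variables used in (\ref{zcomutanUT3}) and (\ref{zwcomutanUT3}) is in fact valid with an arbitrary element of $\mathcal{L}(Y\cup Z\cup W)$ in the first slot, so that after these substitutions both blocks can be returned to canonical nondecreasing order. The one case deserving a separate remark is when some block of $f$ or $g$ is empty, which is permitted since $\mathcal{B}_{(1,1)}$ allows $n\ge 0$ and zero multiplicities; then the substitution above degenerates (to the identity on $z_1$, or to ordinary right multiplication for the second block) and the argument goes through unchanged.
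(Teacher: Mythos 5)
Your proof is correct and follows essentially the same route as the paper's: rename the $Y$-variables along the order-preserving injection, enlarge the first block via the degree-preserving substitution $z_1\mapsto[z_1,\ldots]$, enlarge the second block by right-bracketing, and restore canonical order using the commutation identities (\ref{zcomutanUT3})--(\ref{zwcomutanUT3}) modulo $I$. The only difference is bookkeeping (you absorb the indices outside the image of the injection into the two enlargement steps, whereas the paper treats them in a separate final step), which does not change the argument.
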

\begin{proof}
Suppose that $V_f=((a_1,b_1),\ldots, (a_n,b_n))$ and $V_g=((a'_1,b'_1),\ldots, (a'_m,b'_m))$. 
By hypothesis there exists a subsequence $((a'_{i_1},b'_{i_1}),\ldots, (a'_{i_n},b'_{i_n}))$ of $V_g$ such that $a_k\leq a'_{i_k}$ and $b_k\leq b'_{i_k}$ for every $k\in\{1,\ldots,n\}$. Let $f(z_1,z_2,y_{i_1},\ldots, y_{i_n})$  be the polynomial obtained replacing the variable $y_k$ by $y_{i_k}$ for each $1\leq k\leq n$ in $f(z_1, z_2, y_1,\ldots,y_n)$. Then by equality (\ref{wcomutanUT3}), we get
\begin{align*}
f_1&= [f(z_1, z_2,y_{i_1},\ldots, y_{i_n}), (b'_{i_1}-b_1)y_{i_1},\ldots,(b' _{i_n}-b_n)y_{i_n}]\\
&\equiv [z_1,a_{i_1}y_{i_1},\ldots, a_{i_n}y_{i_n},z_2,b'_{i_1}y_{i_1},\ldots, b'_{i_n}y_{i_n}]  \pmod{I}.
\end{align*}
Replacing the variable $z_1$ by $[z_1,(a'_{i_1}-a_1)y_{i_1},\ldots,(a' _{i_n}-a_n)y_{i_n}]$ in $f_1$ and applying several times (\ref{zcomutanUT3}), we obtain
\begin{align*}
&f_1([z_1,(a'_{i_1}-a_1)y_{i_1},\ldots,(a' _{i_n}-a_n)y_{i_n}], z_2,y_{i_1},\ldots, y_{i_n})\\
&\equiv [z_1,a'_{i_1}y_{i_1},\ldots, a'_{i_n}y_{i_n},z_2,b'_{i_1}y_{i_1},\ldots, b'_{i_n}y_{i_n}].
\end{align*}
Let $\{l_1,\ldots, l_{m-n}\}=\{1\ldots,m\}\setminus \{i_1,\ldots,i_m\}$.
It follows from  equalities (\ref{zcomutanUT3}) and (\ref{wcomutanUT3})
\begin{align*}
[[z_1,a'_{l_1}y_{l_1},& \ldots, a'_{l_{m-n}}y_{l_{m-n}}],a'_{i_1}y_{i_1},\ldots, a'_{i_n}y_{i_n},z_2,\\
&b'_{i_1}y_{i_1},\ldots, b'_{i_n}y_{i_n}, b'_{l_1}{y}_{l_1},\ldots, b'_{l_{m-n}}y_{l_{m-n}}] \equiv g \pmod{I}.
\end{align*}
Hence $g$ is a consequence of $f$ modulo $I$.
\end{proof}
\begin{prop}\label{wellqorderUT3}
$(\mathcal{B}_{(1,1)},\leq_{\mathcal{B}_{(1,1)}})$ is a quasi well ordered set.
\end{prop}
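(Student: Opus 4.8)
The plan is to reduce the claim to an instance of Higman's theorems already recorded in the excerpt. To each polynomial $f\in\mathcal{B}_{(1,1)}$ we have assigned the finite sequence $V_f=((a_1,b_1),\ldots,(a_n,b_n))\in D(\mathbb{N}_0^2)$, and the quasi-order $\leq_{\mathcal{B}_{(1,1)}}$ was defined precisely so that $f\leq_{\mathcal{B}_{(1,1)}}g$ if and only if $V_f\leq_2 V_g$ in $D(\mathbb{N}_0^2)$. So the map $f\mapsto V_f$ is an order-preserving map from $(\mathcal{B}_{(1,1)},\leq_{\mathcal{B}_{(1,1)}})$ into $(D(\mathbb{N}_0^2),\leq_2)$.

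Next I would invoke the structure of Higman's results in the order they are stated. By Proposition \ref{proordem}, $\mathbb{N}_0^2$ with the componentwise order $\leq'_2$ satisfies the f.b.p.\ (it is the Cartesian product of two copies of $\mathbb{N}_0$, each well ordered). Then by Theorem \ref{DP}, the set $D(\mathbb{N}_0^2)$ of finite sequences, equipped with the subsequence-domination order $\leq_2$, also satisfies the f.b.p.; this is exactly the statement recalled in Example \ref{odernk}. Hence $(D(\mathbb{N}_0^2),\leq_2)$ is partially well ordered.

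To conclude, I would argue by contradiction using the criterion from Theorem \ref{eqhigman}. Suppose $(\mathcal{B}_{(1,1)},\leq_{\mathcal{B}_{(1,1)}})$ did not have the f.b.p. By Theorem \ref{eqhigman}(iv) there would exist either an infinite strictly descending chain or an infinite antichain $\{f_i\}_{i\geq 1}$ in $\mathcal{B}_{(1,1)}$. Applying $f\mapsto V_f$ and using that this map is order preserving, the corresponding sequence $\{V_{f_i}\}_{i\geq 1}$ in $D(\mathbb{N}_0^2)$ would again be, respectively, infinite strictly descending or an infinite antichain (for the descending case one also notes that $V_f = V_g$ forces $f=g$, so strictness is preserved; for the antichain case incomparability of the $f_i$ forces incomparability of the $V_{f_i}$). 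Either way this contradicts the fact, established in the previous paragraph, that $D(\mathbb{N}_0^2)$ is partially well ordered. Therefore $(\mathcal{B}_{(1,1)},\leq_{\mathcal{B}_{(1,1)}})$ satisfies the f.b.p.

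There is essentially no hard step here: the work was all done in setting up the encoding $f\mapsto V_f$ and in the cited theorems of Higman. The only point requiring a little care is the monotonicity and injectivity bookkeeping for the map $f\mapsto V_f$ — in particular making sure that distinct elements of $\mathcal{B}_{(1,1)}$ with the same associated sequence cannot occur (which holds because $V_f$ records all the exponents and the two blocks are determined by them), so that an antichain or strictly descending chain upstairs genuinely maps to one downstairs. This mirrors exactly the argument used in the proof of Proposition \ref{Ut2fbp}.
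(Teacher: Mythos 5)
Your proof is correct and follows essentially the same approach as the paper's: encode each $f\in\mathcal{B}_{(1,1)}$ by the sequence $V_f\in D(\mathbb{N}_0^2)$, invoke Higman's results to conclude $D(\mathbb{N}_0^2)$ has the f.b.p., and transport a putative counterexample back via the order-preserving, injective map $f\mapsto V_f$ to reach a contradiction with Theorem \ref{eqhigman}. You are in fact slightly more careful than the paper, which only rules out an infinite antichain and does not explicitly mention the descending-chain case or the injectivity of $V$.
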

\begin{proof}
Suppose that there is an infinite sequence $\{f_i\}_{i\geq 0}$ of  pairwise incomparable elements in ${\mathcal{B}}_{(1,1)}$ with respect to the order $\leq_{\mathcal{B}_{(1,1)}}$. The above sequence defines the infinite sequence $\{V_{f_i}\}_{i\geq 0}$ in $D({\mathbb{N}_{0}}^2)$. Note that the elements of the sequence $\{V_{f_i}\}_{i\geq 0}$ are pairwise incomparable, but this is a contradiction since the set $D({\mathbb{N}_{0}}^2)$ is-partially well ordered.
\end{proof}

\begin{lem}\label{liut3}
The commutators 
\[
c=[z_1,y_1,\ldots, y_t,z_2, y_{t+1},\ldots, y_{t+k}]
\]
are linearly independent modulo the $\mathbb{Z}_3$-graded identities for $UT_3(F)^{(-)}$.
\end{lem}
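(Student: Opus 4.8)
To prove that the commutators
\[
c = [z_1, y_1, \ldots, y_t, z_2, y_{t+1}, \ldots, y_{t+k}]
\]
(ranging over $t, k \geq 0$, so over all ways of distributing the $y$-variables into the two blocks) are linearly independent modulo $T_{\mathbb{Z}_3}(UT_3(F)^{(-)})$, I would exhibit, for each such $c$, a substitution of the grading-homogeneous variables by homogeneous matrix units of $UT_3(F)^{(-)}$ on which $c$ takes a nonzero value while every other commutator in the list vanishes. The natural choice is dictated by the grading: $z_1, z_2$ have degree $1$, so they must go to elements of $L_1 = \mathrm{span}\{e_{12}, e_{23}\}$; the $y_i$ have degree $0$, so they go to elements of $L_0 = \mathrm{span}\{e_{11}, e_{22}, e_{33}\}$; and the bracket of two degree-$1$ elements lands in $L_2 = \mathrm{span}\{e_{13}\}$, which is where a nonzero value of $c$ must live.

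First I would fix the evaluation $z_1 \mapsto e_{12}$, $z_2 \mapsto e_{23}$, so that the "skeleton" $[z_1, z_2] \mapsto [e_{12}, e_{23}] = e_{13}$. Then, for a commutator with $t$ variables in the first block and $k$ in the second, I would use the diagonal idempotents to "tag" the two blocks: evaluate the first $t$ $y$-variables at $e_{11}$ and the remaining $k$ at $e_{33}$ (noting $[e_{12}, e_{11}] = -e_{12}$, while $[e_{12}, e_{33}] = 0$, and dually $[e_{23}, e_{33}] = e_{23}$, $[e_{23}, e_{11}] = 0$). A direct computation with these rules, using that left-normed brackets act one $y$ at a time, shows that $c$ evaluates to a nonzero scalar multiple of $e_{13}$, the scalar being $\pm 1$ (this is where one uses $\ch F$ arbitrary — no denominators appear, only signs). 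The key point is that on this same substitution, any commutator $c'$ from the list with a different block-distribution $(t', k')$ picks up a factor $[e_{12}, e_{33}] = 0$ or $[e_{23}, e_{11}] = 0$ at some stage and hence evaluates to $0$. Since a hypothetical linear dependence among the $c$'s, after reducing to multihomogeneous components, would involve only finitely many $c$'s of a fixed multidegree in each $y_i$ and in $z_1, z_2$, it suffices to separate those; the substitution above does exactly that once we also permute which diagonal idempotent each $y_i$ is sent to, matching the block it occupies in the given $c$.

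The mild subtlety I would watch out for is that within a single commutator the $y$-variables in one block commute past each other modulo $I$ by \eqref{zcomutanUT3}, so the list as written (with $i_1 \leq \cdots$) is already in canonical form, and the only genuine parameter distinguishing two elements of the list is the pair $(t,k)$ of block sizes together with the multiset of $y$'s in each block; the idempotent substitution detects precisely this data. Thus the main obstacle is purely bookkeeping: setting up the evaluation so that, simultaneously, the target commutator survives with a unit coefficient and all competitors vanish. I expect no conceptual difficulty beyond organizing the case analysis cleanly, and in particular the argument is characteristic-free since all surviving coefficients are $\pm 1$.
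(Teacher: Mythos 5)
Your plan is the same one the paper uses --- evaluate $z_1 \mapsto e_{12}$, $z_2 \mapsto e_{23}$, and choose diagonal matrix units for the $y_i$ so that only the target commutator $c$ survives --- but your specific choice of diagonal idempotents does not work, and the claim on which the separation hinges is false.

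You send the first-block $y$'s to $e_{11}$ and the second-block $y$'s to $e_{33}$, and assert that any competitor $c'$ with a different block distribution picks up a vanishing factor, either $[e_{12},e_{33}]=0$ or $[e_{23},e_{11}]=0$. The second of these brackets never actually occurs in a left-normed evaluation of $c'$: once $z_2$ has been bracketed in, the accumulated value lies in $\mathrm{span}\{e_{13}\}$, not $\mathrm{span}\{e_{23}\}$, so the $y$'s of the second block of $c'$ are bracketed against $e_{13}$, and $[e_{13},e_{11}] = -e_{13} \neq 0$. Concretely, take $c=[z_1,y_1,y_2,z_2]$ (so $y_1,y_2\mapsto e_{11}$) and the competitor $c'=[z_1,y_1,z_2,y_2]$; then $c'\mapsto [[[e_{12},e_{11}],e_{23}],e_{11}] = [[-e_{12},e_{23}],e_{11}] = [-e_{13},e_{11}] = e_{13}\neq 0$. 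In general, every $c'$ whose first block is a (possibly proper) subset of the first block of $c$ survives your substitution with value $\pm e_{13}$, so you have not separated $c$ from those competitors.

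The fix, which is exactly what the paper does, is to send the first-block $y$'s to $e_{22}$ rather than $e_{11}$. Then $[e_{12},e_{22}]=e_{12}$ keeps the first block alive, $[e_{12},e_{33}]=0$ kills any competitor that moves a second-block variable into its first block, and --- the crucial point your choice lacks --- $[e_{13},e_{22}]=0$ kills any competitor that moves a first-block variable into its second block. With $e_{22}$ and $e_{33}$ (the paper realises this via the generic diagonal $y_i = \sum_j y_i^j e_{jj}$ and the specialization $y_i^2=1$ for $i\le t$, $y_i^3=1$ for $i>t$, all other $y_i^j=0$), the unique surviving commutator is $c$, with coefficient $1$, and the argument goes through in any characteristic.
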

\begin{proof}
Here we use a technique based on generic matrices, adapted to our case. First, consider the substitution $z_1=e_{12}$ and $z_2=e_{23}$. Computing $c$ with the above substitution of the $z_i$, and assuming $y_i= y_i^1e_{11}+y_i^2 e_{22}++y_i^3 e_{33}$ where the $y_i^j$ are commuting independent variables (here $j$ is an upper index, not an exponent), yields
\[
c=\prod_{s=1}^2 \prod_i (y_i^{s+1} - y_i^1) e_{13}.
\]
Here in the second product, $i$ ranges from $1$ to $t$, if $s=1$.  If $s=2$, $i$ ranges from $t+1$ to $t+k$.
Now make the following substitutions for the $y_i$.

First put all of the $y_i^j=0$ \textbf{except for}:
\begin{itemize}
\item
$i=1$, \dots, $t$: here define $y_i^{2}=1$.
\item
$i=t+1$, \dots, $t+k$: here define $y_i^{3}=1$.
\end{itemize}
Such a substitution vanishes all commutators but $c$, and $c=e_{13}$. 
So if we suppose there is a linear combination among commutators of the type $c$ (we assume them multihomogeneous; this is no loss of generality since the base field is infinite), we assume that $c$ participates with a nonzero coefficient in it. Then make the substitution for the $y_i$ as above. All commutators vanish except for $c$. This proves they are linearly independent.
\end{proof}
\begin{lem}\label{liut3h}
The commutators
$$c=[z_1, a_1y_1,\ldots, a_ny_n, z_2, b_1y_1,\ldots, b_ny_n]$$
are linearly independent modulo the $\mathbb{Z}_3$-graded identities for $UT_3(F)^{(-)}$.
\end{lem}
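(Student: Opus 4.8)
The plan is to repeat, with multiplicities, the generic-matrix computation used in the proof of Lemma~\ref{liut3}. Since $F$ is infinite, it suffices to prove that no nontrivial combination $\sum_{k=1}^{r}\lambda_k c_k$ of pairwise distinct such commutators is a graded identity of $UT_3(F)^{(-)}$; by passing to multihomogeneous components one could even assume that all the $c_k$ have the same multidegree, but this will not be needed. Write $V_{c_k}=((a_1^{(k)},b_1^{(k)}),\dots,(a_n^{(k)},b_n^{(k)}))$ for the sequence attached to $c_k$, after padding with zeros so that all the $c_k$ involve the same variables $y_1,\dots,y_n$; since each type~IV monomial in normal form is uniquely determined by its sequence $V_c$, the $V_{c_k}$ are pairwise distinct. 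Suppose, for contradiction, that $\sum_{k=1}^{r}\lambda_k c_k\equiv 0$ with some $\lambda_k\neq 0$.

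First I would apply the substitution from Lemma~\ref{liut3}: put $z_1=e_{12}$, $z_2=e_{23}$ and $y_i=y_i^{1}e_{11}+y_i^{2}e_{22}+y_i^{3}e_{33}$, with the $y_i^{j}$ commuting independent indeterminates. Using $[e_{12},y_i]=(y_i^{2}-y_i^{1})e_{12}$, $[e_{12},e_{23}]=e_{13}$ and $[e_{13},y_i]=(y_i^{3}-y_i^{1})e_{13}$, an iteration identical to the one already carried out there gives
\[
c_k=\Bigl(\prod_{i=1}^{n}(y_i^{2}-y_i^{1})^{a_i^{(k)}}(y_i^{3}-y_i^{1})^{b_i^{(k)}}\Bigr)e_{13}.
\]
Next I would set $u_i=y_i^{2}-y_i^{1}$ and $v_i=y_i^{3}-y_i^{1}$; since $\{u_i,v_i,y_i^{1}\}_{i}$ is again a free polynomial generating set of $F[y_i^{j}]$, the commutative monomials $\prod_i u_i^{a_i^{(k)}}v_i^{b_i^{(k)}}$ are pairwise distinct, hence linearly independent over $F$. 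Therefore $\sum_k\lambda_k c_k$ evaluates, under this substitution, to $p\cdot e_{13}$ where $p=\sum_k\lambda_k\prod_i u_i^{a_i^{(k)}}v_i^{b_i^{(k)}}$ is a nonzero polynomial; as $F$ is infinite, $p$ does not vanish as a function on $F$, so specializing the $y_i^{j}$ to suitable scalars of $F$ produces a substitution on which $\sum_k\lambda_k c_k\neq 0$, contradicting that it is a graded identity.

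The argument is essentially routine, as it reuses verbatim the bracket computation of Lemma~\ref{liut3}, merely tracking exponents. The one point deserving a line of justification — and arguably the ``main obstacle'' — is the claim that distinct sequences $V_{c_k}$ yield distinct commutative monomials in the $u_i,v_i$, which is precisely where one invokes that each type~IV monomial is determined by its sequence $V_c$. I would also remark that deducing this lemma from Lemma~\ref{liut3} via the usual complete linearization is less convenient: recovering a multihomogeneous polynomial from its full polarization costs a factor $\prod_i(a_i+b_i)!$ and would force a restriction on $\ch F$, whereas the generic-matrix computation above is valid over every infinite field, exactly as the statement requires.
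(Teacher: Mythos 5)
Your proof is correct and follows the same route as the paper's: the same substitution $z_1=e_{12}$, $z_2=e_{23}$, $y_i=\sum_j y_i^j e_{jj}$, the same iterated bracket computation yielding $\prod_i(y_i^2-y_i^1)^{a_i}(y_i^3-y_i^1)^{b_i}\,e_{13}$, and the same reduction to linear independence of distinct commutative monomials. The only cosmetic difference is that the paper simply specializes $y_i^1=0$, while you perform the invertible change of variables $u_i=y_i^2-y_i^1$, $v_i=y_i^3-y_i^1$; both devices serve the identical purpose.
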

\begin{proof}
First, consider the substitution $z_1=e_{12}$ and $z_2=e_{23}$. Computing $c$ with the above substitution of the $z_i$, and assuming $y_i= y_i^1e_{11}+y_i^2 e_{22}++y_i^3 e_{33}$ where the $y_i^j$ are commuting independent variables as above, gives
\[
c= \left(\prod_{i=1}^n (y_i^{2} - y_i^1)^{a_i} \prod_{i=1}^n (y_i^{3} - y_i^1)^{b_i}\right)  e_{13}. 
\]
Now making $y_i^1=0$ for all $i$, we obtain 
\begin{equation}\label{eqc}
c=\left( \prod_{i=1}^n (y_i^{2})^{a_i}\prod_{i=1}^n (y_i^{3})^{b_i}\right)e_{13}. 
\end{equation}
Define the following monomial
\[
m_c=\prod_{i=1}^n (y_i^{2})^{a_i}\prod_{i=1}^n (y_i^{3})^{b_i}.
\]
Consider $c$, $c'$ two different commutators  and notice that  $m_c\neq m_{c'}$. Hence, the elements of the set
$\{ m_c\mid c\in\mathcal{B}_{(1,1)}\}$
are linearly independent. So if we suppose there is a nontrivial linear 
combination among commutators of type $c$ (we assume them multihomogeneous) such that
\[
\sum_{i=1}^t \alpha_i c_i(z_1,z_2,y_1,\ldots, y_n)\in I.
\]
By equality (\ref{eqc}), we have
\[
0=\left( \sum\limits_{j=1}^t \alpha_{j} m_{c_j} \right) e_{13}.
\]
It follows that 
\[
0=\sum\limits_{j=1}^t \alpha_{j} m_{c_j},
\]
hence $\alpha_{j}=0$. This proves the commutators $c$ are linearly independent.
\end{proof}

\begin{defi}\label{linordut3}
Let $f_i$, $f_j\in\mathcal{B}_{(1,1)}$ be multihomogeneous  polynomials of the same multidegree. Consider the finite sequences
\[
V_{f_i}= ((a_1,b_1),\ldots,(a_n,b_n)), \quad
V_{f_j}= ((a'_1,b'_1),\ldots,(a'_n,b'_n)).
\]
We define the order $f_j\leq'  f_i$ as follows: 
\[
f_j\leq'  f_i \text{ if } \sum_{i=1}^n a'_i> \sum_{i=1}^n a_i, \text{ or if }\sum_{i=1}^n a'_i=\sum_{i=1}^n a_i \text{ and }(a_1,\ldots, a_n)\leq_{\textrm{lex}} (a'_1,\ldots, a' _n).
\]
\end{defi}
The order $\leq'$ is linear on the polynomials of the same multidegree in $\mathcal{B}_{(1,1)}$.

\begin{defi}\label{mlUT3}
Let $f$ be a multihomogeneous polynomial such that 
\[
f=\sum\limits_{i=1}^n \alpha_i f_i,
\]
where $f_i\in\mathcal{B}_{(1,1)}$ and $\alpha_i\in F\setminus\{0\}$. We define the leading monomial of $f$ by
\[
\textrm{ml}(f)= \max_{\leq'} \{f_i\mid 1\leq i\leq n\},
\]
and the leading coefficient of $f$, denoted by $\textrm{cl}(f)$, as the coefficient of $\textrm{ml}(f)$. 
\end{defi}
By Lemmas \ref{liut3} and \ref{liut3h},  this way of writing $f$ as a linear combination of elements of $\mathcal{B}_{(1,1)}$ is unique. For this reason, we can define the leading monomial of $f$ with respect to the order $\leq'$.
\begin{prop}\label{orderpreservadout3}
Let $f(z_1,z_2,y_1,\ldots,y_n)$ be a multihomogeneous polynomial such that 
\[
f(z_1,z_2,y_1,\ldots,y_n)=\sum_{i=1}^n \alpha_i f_i(z_1,z_2,y_1,\ldots,y_n),
\]
where $f_i(z_1,z_2,y_1,\ldots,y_n)\in\mathcal{B}_{(1,1)}$ and $\alpha_i\in F\setminus\{0\}$. The following statements hold.
\begin{enumerate}[label=\roman*.]
\item If $g(z_1,z_2,y_1,\ldots,y_n,y)= [f(z_1,z_2,y_1,\ldots,y_n), y]$, then $ml(g)= [ml(f),y]$.
\item If $h(z_1,z_2,y_1,\ldots,y_n,y)= f([z_1,y],z_2,y_1,\ldots,y_n)$, then 
\[
ml(h)= ml(f)([z_1,y],z_2,y_1,\ldots,y_n).
\]
\end{enumerate}
\end{prop}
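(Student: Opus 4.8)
The plan is to treat both items by the same device. Since the two operations in play — postmultiplication by $y$ in (i) and the substitution $z_1\mapsto[z_1,y]$ in (ii) — are linear, I would first determine how each of them transforms a single normal monomial $f_i\in\mathcal{B}_{(1,1)}$, reduce the outcome to normal form using only the relations (\ref{zcomutanUT3})--(\ref{zwcomutanUT3}), and then verify two things: that the induced self-map of $\mathcal{B}_{(1,1)}$ is injective and preserves the linear order $\leq'$ of Definition \ref{linordut3} on each fixed multidegree; and that, by the linear independence supplied by Lemma \ref{liut3h} (together with the uniqueness of the expansion remarked after Definition \ref{mlUT3}), no cancellation occurs when the operation is applied to $f=\sum_i\alpha_i f_i$. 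Granting these, the leading monomial of the image is forced to be the image of $\textrm{ml}(f)$.

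For (i), write $y=y_{n+1}$. For $f_i=[z_1,y_{i_1},\dots,y_{i_k},z_2,y_{j_1},\dots,y_{j_l}]$ the bracket $[f_i,y_{n+1}]$ is already left normed, so it equals $[z_1,y_{i_1},\dots,y_{i_k},z_2,y_{j_1},\dots,y_{j_l},y_{n+1}]$, again a normal monomial of $\mathcal{B}_{(1,1)}$ (the second block stays nondecreasing because $n+1$ exceeds every $j_t$). On the level of the associated sequences, $V_{[f_i,y_{n+1}]}$ is obtained from $V_{f_i}$ by appending the pair $(0,1)$: the $a$-sum is unchanged and the $a$-tuple only acquires a trailing $0$. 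Hence every $\leq'$-comparison among the monomials occurring in $f$ is unaffected, the map $f_i\mapsto[f_i,y_{n+1}]$ is an order isomorphism onto its image on the relevant multidegree, and it is plainly injective. By Lemma \ref{liut3h} the $[f_i,y_{n+1}]$ are linearly independent modulo $I$, so $g=\sum_i\alpha_i[f_i,y_{n+1}]$ is exactly the expansion of $g$ over $\mathcal{B}_{(1,1)}$ with the same nonzero coefficients; therefore $\textrm{ml}(g)=[\,\max_{\leq'}\{f_i\},\,y\,]=[\textrm{ml}(f),y]$.

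For (ii), again put $y=y_{n+1}$. Substituting $z_1\mapsto[z_1,y_{n+1}]$ into $f_i$ gives $[[z_1,y_{n+1}],y_{i_1},\dots,y_{i_k},z_2,y_{j_1},\dots,y_{j_l}]=[z_1,y_{n+1},y_{i_1},\dots,y_{i_k},z_2,y_{j_1},\dots,y_{j_l}]$, and a finite number of applications of (\ref{zwcomutanUT3}), sliding $y_{n+1}$ to the right end of the first block, brings this to the normal monomial $[z_1,y_{i_1},\dots,y_{i_k},y_{n+1},z_2,y_{j_1},\dots,y_{j_l}]\in\mathcal{B}_{(1,1)}$ (the degenerate case $f_i=[z_1,z_2]$ is included and needs no reordering). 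Here $V_{f_i}$ is transformed by appending the pair $(1,0)$: every $a$-sum increases by exactly $1$ and every $a$-tuple acquires a trailing $1$, so once more all $\leq'$-comparisons among the monomials of $f$ are preserved, the induced self-map of $\mathcal{B}_{(1,1)}$ is injective, and Lemma \ref{liut3h} rules out cancellation in $h=\sum_i\alpha_i f_i([z_1,y_{n+1}],z_2,y_1,\dots,y_n)$. Hence $\textrm{ml}(h)$ is the image of $\textrm{ml}(f)$, i.e.\ $\textrm{ml}(h)=\textrm{ml}(f)([z_1,y],z_2,y_1,\dots,y_n)$.

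The one step I expect to require genuine care is the normalization in (ii): one must be sure that substituting $z_1\mapsto[z_1,y]$ and reducing modulo $I$ yields a single normal monomial rather than a combination of several terms that might later interfere. This is in fact immediate, because after left-norming the only obstruction to normal form is that the fresh variable $y_{n+1}$ sits at the front of the first $Y$-block, and relation (\ref{zwcomutanUT3}) is precisely what allows it to commute past the remaining variables of that block, while $z_2$ and the whole second block remain untouched. With this in hand the order bookkeeping above is routine, and the linear independence of Lemma \ref{liut3h} finishes the argument.
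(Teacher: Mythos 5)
Your argument is correct and is precisely the content the paper leaves implicit: the paper's own proof of this proposition is the single line ``It follows from Definitions \ref{linordut3} and \ref{mlUT3}.'' Your careful verification that, on the level of sequences, $[f_i,y]$ appends $(0,1)$ and $f_i([z_1,y],z_2,\ldots)$ appends $(1,0)$ to $V_{f_i}$, that each map is injective into $\mathcal{B}_{(1,1)}$ and preserves the linear order $\leq'$ on a fixed multidegree, and that the uniqueness of the $\mathcal{B}_{(1,1)}$-expansion (Lemmas \ref{liut3} and \ref{liut3h}) keeps the coefficients intact, is exactly the bookkeeping the paper is appealing to.
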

\begin{proof}
It follows from Definitions \ref{linordut3} and \ref{mlUT3}.
\end{proof}

\begin{prop}\label{seq2orderut3}
Let $f$, $g$ be two multihomogeneous polynomials such that
\[
f=\sum_{i=1}^t \alpha_i f_i,\qquad g=\sum_{j=1}^s \beta_j g_j,
\]
where $\alpha_i$, $\beta_j\in F\setminus\{0\}$, $f_i$, $g_j\in\mathcal{B}_{(1,1)}$ for all $1\leq i\leq t$ and $1\leq j\leq s$. Suppose that $ml(f)\leq_{\mathcal{B}_{(1,1)}} ml(g)$, then there exists $h\in\langle f\rangle_{\mathbb{Z}_3}$ modulo $I$ such that $ml(h)=ml(g)$ and $cl(h)=cl(f)$. 
\end{prop}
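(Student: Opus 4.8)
The plan is to reproduce, this time starting from $f$ rather than from $ml(f)$, the explicit chain of graded operations that in the proof of Proposition~\ref{conseUT3} turns $ml(f)$ into $ml(g)$, and then to keep track of the leading term of the result by repeated use of Proposition~\ref{orderpreservadout3}.

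Since $f$ is multihomogeneous, $f_1,\dots,f_t$ all share one multidegree, so $ml(f)$ and $cl(f)$ are well defined; write $ml(f)=f_{i_0}$ and $ml(g)=g_{j_0}$, so that the hypothesis reads $V_{ml(f)}\leq_2 V_{ml(g)}$. The proof of Proposition~\ref{conseUT3} brings the monomial $ml(f)$ to the monomial $ml(g)$ modulo $I$ by a finite sequence of elementary steps of only three kinds: (a) a relabelling $y_k\mapsto y_{i_k}$ of the $y$-variables along an order-preserving injection; (b) a right commutator with a single variable $y$; and (c) the substitution $z_1\mapsto[z_1,y]$. First I would apply exactly this sequence of elementary steps, in the same order, to the polynomial $f$ itself, calling the outcome $h$. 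Each step is either a graded substitution on $\mathcal{L}(Y\cup Z\cup W)$ or a right multiplication by a variable, hence keeps us inside $\langle f\rangle_{T_{\mathbb{Z}_3}}$ modulo $I$; thus $h\in\langle f\rangle_{T_{\mathbb{Z}_3}}$ modulo $I$.

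It then remains to check that each elementary step carries $ml$ of its input to $ml$ of its output, by applying the same step to it, and leaves the corresponding coefficient unchanged; composing over all steps yields $ml(h)$ equal to the monomial obtained from $ml(f)$ by the procedure of Proposition~\ref{conseUT3}, that is $ml(g)$, and $cl(h)=\alpha_{i_0}=cl(f)$, as required. For steps of type (b) and (c) this is precisely Proposition~\ref{orderpreservadout3}; moreover these two operations act injectively on the finite sequences $V_{f_i}$ — adding $1$ to one entry of the second block, resp.\ of the first block — so distinct monomials stay distinct, no cancellation occurs, and the leading coefficient is carried along intact. For a relabelling of type (a): it preserves $\sum_k a_k$ for every monomial of $\mathcal{B}_{(1,1)}$ and, being applied simultaneously to all the $f_i$, it merely inserts zeros in the same positions in every $a$-vector at once, hence preserves their lexicographic comparison; therefore it preserves the linear order $\leq'$ of Definition~\ref{linordut3}, and with it both $ml$ and $cl$.

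The one place requiring care is the bookkeeping in the previous paragraph: one must be sure that at no intermediate stage does the leading monomial jump to a different summand or get cancelled. This is not a genuine difficulty here, because we do not ask $h$ to be close to $g$ — only that its leading monomial equal $ml(g)$ with leading coefficient $cl(f)$ — and because Proposition~\ref{orderpreservadout3} already records exactly how the elementary operations act on leading monomials. So the argument reduces to the remark about relabellings together with one application of Proposition~\ref{orderpreservadout3} per elementary step of type (b) or (c).
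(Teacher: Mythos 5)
Your proposal is correct and follows essentially the same route as the paper: apply to $f$ the same chain of graded operations that Proposition~\ref{conseUT3} applies to $ml(f)$, and use Proposition~\ref{orderpreservadout3} to track the leading monomial through each step. The paper's own proof is more terse (it does not explicitly separate out the relabelling step nor discuss injectivity of the substitutions), so your write-up usefully fills in exactly the bookkeeping the paper leaves implicit.
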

\begin{proof}
By proposition \ref{conseUT3}, we know that $ml(g)\in\langle ml(f)\rangle_{\mathbb{Z}_3}$. Then, making the same computations done on $ml(f)$ to obtain $ml(g)$, we can obtain a consequence $h$ from $f$. Moreover, by Proposition \ref{orderpreservadout3}, $ml(h)=ml(g)$. It is clear that the polynomial $h$ has the same leading coefficient as $f$.
\end{proof}
\begin{defi}
Let $f$ be a multihomogeneous polynomial that is a linear combination of polynomials in $\mathcal{B}_{(1,1)}$. Then $f$ is called polynomial of type $(1,1)$.
\end{defi}
\begin{prop}\label{sequt3}
There is no infinite sequence of polynomial  $\{f_i\}_{i\geq 1}$ of type $(1,1)$ such that
\[
f_i\notin \langle f_1,\ldots, f_{i-1}\rangle_{T_{\mathbb{Z}_3}} \pmod{I}
\]
where $i\geq 2$.
\end{prop}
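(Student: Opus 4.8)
The plan is to argue by contradiction, combining the well-quasi-order on $\mathcal{B}_{(1,1)}$ from Proposition \ref{wellqorderUT3} with a reduction procedure assembled from Propositions \ref{conseUT3}, \ref{orderpreservadout3} and \ref{seq2orderut3}. Suppose an infinite sequence $\{f_i\}_{i\ge1}$ as in the statement exists. Since a polynomial of type $(1,1)$ is by definition a multihomogeneous linear combination of elements of $\mathcal{B}_{(1,1)}$, Lemmas \ref{liut3} and \ref{liut3h} make the leading monomial $ml(f_i)\in\mathcal{B}_{(1,1)}$ and leading coefficient $cl(f_i)\in F\setminus\{0\}$ well defined; we may also assume every $f_i\neq0$, as a zero term lies trivially in $\langle f_1,\dots,f_{i-1}\rangle_{T_{\mathbb{Z}_3}}$.

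First I would build inductively an auxiliary sequence $\{g_i\}_{i\ge1}$ with $\langle g_1,\dots,g_i\rangle_{T_{\mathbb{Z}_3}}=\langle f_1,\dots,f_i\rangle_{T_{\mathbb{Z}_3}}\pmod I$ for all $i$. Put $g_1=f_1$. Given $g_1,\dots,g_{i-1}$, start with $h:=f_i$ and repeat the following step as long as $h\neq0$ and some $j<i$ satisfies $ml(g_j)\le_{\mathcal{B}_{(1,1)}}ml(h)$: invoke Proposition \ref{seq2orderut3} to obtain $h'$, a consequence of $g_j$ modulo $I$, with $ml(h')=ml(h)$ and $cl(h')=cl(g_j)$, and replace $h$ by $h-(cl(h)/cl(g_j))h'$. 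This cancels the leading term, so it strictly lowers $ml(h)$ with respect to $\le'$ inside the finite set of $\mathcal{B}_{(1,1)}$-monomials of the fixed multidegree of $f_i$; hence the loop halts. Set $g_i:=h$. By Propositions \ref{conseUT3} and \ref{orderpreservadout3} each intermediate $h$ remains a multihomogeneous linear combination of $\mathcal{B}_{(1,1)}$-monomials of that same multidegree, and $g_i\equiv f_i$ modulo $\langle g_1,\dots,g_{i-1}\rangle_{T_{\mathbb{Z}_3}}$ and $I$, which gives the asserted equality of $T_{\mathbb{Z}_3}$-ideals by induction.

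Next I would extract the contradiction. If $g_i=0$ for some $i$, then $f_i\in\langle g_1,\dots,g_{i-1}\rangle_{T_{\mathbb{Z}_3}}=\langle f_1,\dots,f_{i-1}\rangle_{T_{\mathbb{Z}_3}}\pmod I$, contrary to the hypothesis; so every $g_i\neq0$. Moreover the reduction loop stopped only because no $j<i$ has $ml(g_j)\le_{\mathcal{B}_{(1,1)}}ml(g_i)$, so for all pairs $j<i$ we have $ml(g_j)\not\le_{\mathcal{B}_{(1,1)}}ml(g_i)$. Hence the infinite sequence $ml(g_1),ml(g_2),\dots$ in $\mathcal{B}_{(1,1)}$ admits no infinite ascending subsequence, since any two of its terms taken in increasing index order would already contradict this. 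But this contradicts item iii. of Theorem \ref{eqhigman} applied to the well-quasi-ordered set $(\mathcal{B}_{(1,1)},\le_{\mathcal{B}_{(1,1)}})$ of Proposition \ref{wellqorderUT3}. Therefore no such sequence $\{f_i\}$ exists.

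The points that need care, rather than genuine obstacles, are: that the consequence $h'$ produced by Proposition \ref{seq2orderut3} is again of type $(1,1)$ of the correct multidegree (it is, being obtained by the degree-preserving substitutions of Proposition \ref{conseUT3} and forced into the right multidegree by $ml(h')=ml(h)$ together with the classification of nonzero monomials modulo $I$); that the reduction terminates, which rests on $\le'$ being a linear order on a finite set of monomials per Definition \ref{linordut3}; and that $ml$ and $cl$ are unambiguous, which is exactly the content of Lemmas \ref{liut3} and \ref{liut3h}. Once this bookkeeping is in place the statement is essentially a direct application of Higman's theorem, so I do not anticipate a real difficulty beyond it.
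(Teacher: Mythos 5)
Your proof is correct, and it takes a genuinely different route from the paper's. The paper's proof first normalizes the sequence so that each $f_i$ has the $\le'$-minimal leading monomial among all polynomials of type $(1,1)$ of its multidegree lying in $J_i\setminus J_{i-1}$ (a ``without loss of generality'' step whose justification requires a little care), then extracts via Higman's theorem an infinite ascending subsequence of leading monomials, forms a combination $h=\sum_{l=1}^{s} h_l$ of consequences of earlier $f_{i_l}$'s with $ml(h)=m_{i_{s+1}}$ and $cl(h)=\sum_{l=1}^{s}\alpha_{i_l}\neq 0$, and cancels the leading term of $f_{i_{s+1}}$ to produce an element of $R_{i_{s+1}}$ with strictly smaller leading monomial, contradicting that minimality. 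You instead run an explicit reduction in the style of Gr\"obner bases: replace each $f_i$ by a normal form $g_i$ against $g_1,\ldots,g_{i-1}$ via iterated leading-term cancellation through Proposition \ref{seq2orderut3} (terminating because $\le'$ is a linear order on the finite set of $\mathcal{B}_{(1,1)}$-monomials of the fixed multidegree), observe that $g_i\neq 0$ by the hypothesis, note that by construction $ml(g_j)\not\le_{\mathcal{B}_{(1,1)}}ml(g_i)$ for all $j<i$, and contradict item iii.\ of Theorem \ref{eqhigman} directly. Both arguments use the same machinery (Lemmas \ref{liut3}, \ref{liut3h} and Propositions \ref{conseUT3}, \ref{orderpreservadout3}, \ref{seq2orderut3}, \ref{wellqorderUT3}); yours is more constructive and sidesteps the minimality normalization, at the cost of carrying the auxiliary sequence $\{g_i\}$ and the check that the reduction preserves type and multidegree, which you correctly flag and handle.
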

\begin{proof}
Suppose, on the contrary, that there exists such an infinite sequence of polynomials $\{f_i\}_{i\geq 1}$. Moreover, suppose that the $f_i$'s are of different multidegrees in the variables $Y$. Define the following sets
\begin{itemize}
    \item $J_i=\langle f_1, \ldots, f_i\rangle_{T_{\mathbb{Z}_3}} \pmod{I}$;
    \item $R_i=\{f\in J_i\setminus J_{i-1}\mid f$ is of type $(1,1)$ and of the same multidegree in the variables $Y$  as $f_i\}$ ;
    \item $\widehat{R_i}=\{ ml(f)\mid f\in R_i\}$.
\end{itemize}
Note that $f_i\in R_i$, so without loss of generality, we can suppose that
\[
ml(f_i)=\min_{\leq'} \widehat{R_i}.
\]
We denote $m_i:= ml(f_i)$ where $i\geq 1$.
Then we have the infinite sequence $\{m_i\}_{i\geq 1}$ in $\mathcal{B}_{(1,1)}$. By Proposition \ref{wellqorderUT3}, we have that  $(\mathcal{B}_{(1,1)}, \leq_{\mathcal{B}_{(1,1)}})$ is a quasi well ordered set. It follows from Theorem \ref{eqhigman} that there exists an infinite subsequence $\{m_{i_k}\}_{k\geq 1}$ of the sequence $\{m_i\}_{i\geq 1}$ such that
\[
m_{i_1} \leq_{\mathcal{B}_{(1,1)}}m_{i_2} \leq_{\mathcal{B}_{(1,1)}}\cdots \leq_{\mathcal{B}_{(1,1)}} m_{i_k} \leq_{\mathcal{B}_{(1,1)}}\cdots
\]
where $i_1<i_2<\cdots i_k<\cdots$.

Let $\alpha_{i_k}$ be the leading coefficient of $f_{i_k}$, where $k\geq 1$. Take $s> 1$ such that
\[
\sum_{l=1}^s \alpha_{i_l}\neq 0.
\]
Recall that  $m_{i_l}\leq_{\mathcal{B}_{(1,1)}} m_{i_{s+1}}$,
where $l\in\{1,\ldots s\}$. By Proposition \ref{seq2orderut3}, there exists $h_l\in\langle f_{i_l}\rangle_{T_{\mathbb{Z}_2}}$ such that
\[
ml(h_l)= m_{i_{s+1}} \text{ and } cl(h_l)=\alpha_{i_l},
\]
for every $1\leq l\leq s$. Consider the polynomial
\[
h=\sum_{l=1}^s h_{l},
\]
and notice that $ml(h)=m_{i_{s+1}}$ and $cl(h)=\sum\limits_{l^1}^s \alpha_{i_l}$.

Observe that $h_{l}\in J_{i_l}\subseteq J_{i_{s+1}-1}$, where $1\leq l\leq s$, and since $J_{i_{s+1}-1}$ is $T_{\mathbb{Z}_2}$-ideal, we have $h\in J_{i_{s+1}-1}$. Define
\[
g= f_{i_{s+1}}- (\alpha_{i_{s+1}} cl(h)^{-1})h.
\]
Then $ml(g)\neq m_{i_{s+1}}$ and $ml(g)\leq' m_{i_{s+1}}$. Note that $g\in J_{i_{s+1}}$ because $f_{i_{s+1}}\in J_{i_{s+1}}$ and $h\in J_{i_{s+1}-1}\subseteq J_{i_{s+1}}$. On the other hand, $g\notin J_{i_{s+1}-1}$ because $f_{i_{s+1}}\notin J_{i_{s+1}-1}$ and $h\in J_{i_{s+1}-1}$. It follows that $g\in R_{i_{s+1}}$, then  $ml(g)\in\widehat{R}_{i_{s+1}}$, but $ml(g)<'m_{i_{s+1}}$ and $m_{i_{s+1}}=\min_{\leq'} \widehat{R}_{i_{s+1}}$, which is a contradiction.
\end{proof}
As a direct consequence of Proposition \ref{sequt3}, we have the following corollary.
\begin{cor}\label{B11finito}
Let $J$ be a $T_{\mathbb{Z}_3}$-ideal such that $I\subseteq J$. Consider the following set 
\[
\mathcal{A}_{(1,1)}=\{f\in J\mid f\ \textrm{is a polynomial of type $(1,1)$}\}.
\]
Then there exists a finite subset $\mathcal{A}' _{(1,1)}\subseteq \mathcal{A}_{(1,1)}$ such that 
\[
\langle\mathcal{A}_{(1,1)}\rangle_{T_{\mathbb{Z}_3}}= \langle \mathcal{A}'_{(1,1)}\rangle_{T_{\mathbb{Z}_3}} \pmod{I}.
\]
\end{cor}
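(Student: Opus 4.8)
The plan is to deduce the statement directly from Proposition~\ref{sequt3} by a standard "no infinite strictly increasing chain of $T$-ideals" argument. Set $\mathcal{A}_{(1,1)}=\{f\in J\mid f \text{ is a polynomial of type }(1,1)\}$ and suppose, for contradiction, that no finite subset of $\mathcal{A}_{(1,1)}$ generates $\langle\mathcal{A}_{(1,1)}\rangle_{T_{\mathbb{Z}_3}}$ modulo $I$. First I would construct recursively a sequence $\{f_i\}_{i\ge 1}$ in $\mathcal{A}_{(1,1)}$ as follows: pick any $f_1\in\mathcal{A}_{(1,1)}$; having chosen $f_1,\ldots,f_{i-1}$, the $T_{\mathbb{Z}_3}$-ideal $\langle f_1,\ldots,f_{i-1}\rangle_{T_{\mathbb{Z}_3}}$ modulo $I$ is strictly contained in $\langle\mathcal{A}_{(1,1)}\rangle_{T_{\mathbb{Z}_3}}$ modulo $I$ (otherwise we would have a finite generating set), so there exists $f_i\in\mathcal{A}_{(1,1)}$ with $f_i\notin\langle f_1,\ldots,f_{i-1}\rangle_{T_{\mathbb{Z}_3}}\pmod{I}$. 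This produces exactly the kind of infinite sequence that Proposition~\ref{sequt3} forbids, giving a contradiction, and hence a finite $\mathcal{A}'_{(1,1)}\subseteq\mathcal{A}_{(1,1)}$ with $\langle\mathcal{A}_{(1,1)}\rangle_{T_{\mathbb{Z}_3}}=\langle\mathcal{A}'_{(1,1)}\rangle_{T_{\mathbb{Z}_3}}\pmod{I}$.

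The one point that needs a little care is that Proposition~\ref{sequt3} was proved under the extra assumption that the $f_i$ have pairwise distinct multidegrees in the variables $Y$, so I would make sure the recursion can be arranged to meet this hypothesis, or else reduce to it. The reduction is routine: since $F$ is infinite, each $f_i\in\mathcal{A}_{(1,1)}$ may be replaced by its multihomogeneous components, which again lie in $J$ (as $J$ is a $T_{\mathbb{Z}_3}$-ideal and $F$ is infinite) and are again of type $(1,1)$; and $\langle\mathcal{A}_{(1,1)}\rangle_{T_{\mathbb{Z}_3}}$ is generated modulo $I$ by the multihomogeneous polynomials it contains. So without loss of generality every element entering the recursion is multihomogeneous. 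If at some stage the newly chosen $f_i$ shares its $Y$-multidegree with an earlier $f_j$, one observes that there are only finitely many type-$(1,1)$ polynomials of a given multidegree up to scalars modulo $I$ (by Lemmas~\ref{liut3} and \ref{liut3h} the relevant commutators form a basis), so only finitely many consecutive indices can share a multidegree before the generated ideal stabilizes in that multidegree; hence one may pass to a subsequence, or choose the $f_i$ greedily, so that the multidegrees are pairwise distinct while retaining the property $f_i\notin\langle f_1,\ldots,f_{i-1}\rangle_{T_{\mathbb{Z}_3}}\pmod{I}$.

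The main obstacle, such as it is, is precisely this bookkeeping about multidegrees; the rest is the standard contrapositive of "every ascending chain of finitely generated subideals stabilizes." Once the sequence $\{f_i\}$ satisfying the hypotheses of Proposition~\ref{sequt3} is in hand, that proposition supplies the contradiction immediately, and the corollary follows.
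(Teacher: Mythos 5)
Your argument is correct and matches the paper's, which presents the corollary simply as a direct consequence of Proposition~\ref{sequt3}: one assumes no finite generating subset works, builds the forbidden infinite sequence $\{f_i\}$ recursively with $f_i\notin\langle f_1,\ldots,f_{i-1}\rangle_{T_{\mathbb{Z}_3}}\pmod I$, and invokes the proposition for a contradiction. The extra bookkeeping you raise about distinct $Y$-multidegrees is really a concern about the \emph{proof} of Proposition~\ref{sequt3} itself (whose proof does quietly assume pairwise distinct multidegrees without justifying the reduction), not about the deduction of the corollary from it, since the proposition as stated imposes no such restriction; note also that ``type $(1,1)$'' already means multihomogeneous by definition, so your first reduction is vacuous.
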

Now we deduce the Specht property for the ideal of $\mathbb{Z}_3$-graded identities for $UT_3^{(-)}(F)$ when $F$ is an infinite field of characteristic different from 2.
\begin{theo}
Suppose that $\ch F\ne 2$. If $J$ is a $T_{\mathbb{Z}_3}$-ideal such that $I\subseteq J$ then $J$ is finitely generated as $T_{\mathbb{Z}_3}$-ideal.
\end{theo}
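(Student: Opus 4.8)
The plan is to reduce the claim to a statement about multihomogeneous polynomials and then split $J$ modulo $I$ according to the four types of monomials listed after \eqref{zwcomutanUT3}. Since $F$ is infinite, $J$ is generated as a $T_{\mathbb{Z}_3}$-ideal (modulo $I$) by its multihomogeneous elements, and every multihomogeneous $f\in\mathcal{L}(Y\cup Z\cup W)/I$ is either equivalent to a monomial of type I or II, or is a linear combination of monomials of type III or IV. In all cases the first variable of the monomial is a $z$ or a $w$; moreover a monomial of type III, $[z,\dots,z,\dots]$, is modulo $I$ a scalar multiple of a monomial of type IV after one uses the identities to move a $z$, so it suffices to handle the "type $(1,1)$" polynomials together with types I and II. Concretely, I would write $J = \langle\, \mathcal{A}_1\cup\mathcal{A}_2\cup\mathcal{A}_{(1,1)}\,\rangle_{T_{\mathbb{Z}_3}}\pmod I$, where $\mathcal{A}_i = J\cap\mathcal{B}_i$ for $i=1,2$ and $\mathcal{A}_{(1,1)}$ is the set of type-$(1,1)$ polynomials in $J$, as defined before Corollary~\ref{B11finito}.

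The first step is to justify that decomposition: a multihomogeneous $f\in J$ decomposes, modulo $I$, into its components of types I, II, III/IV, and each component again lies in $J$ because $J$ is a $T_{\mathbb{Z}_3}$-ideal containing $I$ (one isolates a component by a suitable substitution/evaluation that kills the others, using the linear independence established in Lemmas~\ref{liut3} and \ref{liut3h} for the type-IV part and the analogous, simpler, independence for types I and II). Here is where the hypothesis $\ch F\ne 2$ enters: to rewrite a type-III monomial $[z,\ldots,z,\ldots]$ (two equal $z$'s) as a scalar multiple of a type-IV monomial one substitutes $z_1,z_2\mapsto z$, which introduces a factor that must be invertible; when $\ch F=2$ this mechanism fails, and indeed the second half of the paper shows the Specht property itself fails there. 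So the first step is: reduce to showing each of $\langle\mathcal{A}_1\rangle$, $\langle\mathcal{A}_2\rangle$, $\langle\mathcal{A}_{(1,1)}\rangle$ is finitely generated modulo $I$.

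The second step is then immediate from the machinery already built: $\langle\mathcal{A}_1\rangle_{T_{\mathbb{Z}_3}}$ and $\langle\mathcal{A}_2\rangle_{T_{\mathbb{Z}_3}}$ are finitely generated modulo $I$ by Corollary~\ref{Bifinito}, and $\langle\mathcal{A}_{(1,1)}\rangle_{T_{\mathbb{Z}_3}}$ is finitely generated modulo $I$ by Corollary~\ref{B11finito} (which rests on Proposition~\ref{sequt3}, i.e. on the fact that $(\mathcal{B}_{(1,1)},\leq_{\mathcal{B}_{(1,1)}})$ is well-quasi-ordered via $D(\mathbb{N}_0^2)$, together with the leading-monomial argument). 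Taking the union of the three finite generating sets, together with a finite basis for $I$ supplied by Theorem~\ref{idutnf}, yields a finite basis for $J$.

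I expect the only genuine obstacle to be the bookkeeping in the first step, i.e. carefully verifying that the decomposition of a multihomogeneous polynomial into type-I, type-II and type-$(1,1)$ parts is respected by $J$ and that the type-III summands really can be absorbed into type-$(1,1)$ when $\ch F\ne 2$. The rest is a matter of invoking Corollaries~\ref{Bifinito} and \ref{B11finito}. I would therefore structure the proof as: (1) reduce $J$ modulo $I$ to the three pieces, flagging the use of $\ch F\ne 2$; (2) quote Corollary~\ref{Bifinito} for $\mathcal{A}_1,\mathcal{A}_2$; (3) quote Corollary~\ref{B11finito} for $\mathcal{A}_{(1,1)}$; (4) assemble the finite basis.
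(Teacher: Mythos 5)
Your proposal is correct and matches the paper's proof: both split $J$ modulo $I$ by multidegree into the type-I, type-II and type-$(1,1)$ pieces and invoke Corollaries~\ref{Bifinito} and \ref{B11finito}, with $\ch F\neq 2$ used exactly as you indicate to multilinearize the degree-two-in-$z$ (type III) component into type-$(1,1)$ form. One remark: the ``isolating components'' you flag as the genuine obstacle in your first step is in fact automatic, since a multihomogeneous polynomial has a single fixed multidegree which already determines its type, so no evaluation argument or appeal to Lemmas~\ref{liut3} and~\ref{liut3h} is needed there.
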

\begin{proof}
Since $F$ is infinite, $J$ is generated by its multihomogeneous polynomials. As $\ch F\ne 2$, using the multilinearization process, we can consider that every multihomogeneous polynomial with variables $z's$ is linear in these variables, because by the identities (\ref{idut3}), they appear in the non-zero monomials of  $\mathcal{L}(Y\cup Z\cup W)/I$ at most twice. Hence, $J$ is generated as a $T_{\mathbb{Z}_3}$-ideal, modulo $I$, by the following sets
\begin{itemize}
    \item  $\mathcal{A}_{i}=\{f\in J\mid f\in \mathcal{B}_{i}\}$ where $1\leq i\leq 2$;
    \item $\mathcal{A}_{(1,1)}=\{f\in J\mid f\ \textrm{is a polynomial of type $(1,1)$}\}$.
\end{itemize}
Using Corollaries \ref{Bifinito} and \ref{B11finito}, we get that there exist finite subsets $\mathcal{A}'_{i}\subseteq \mathcal{A}_{i}$, where $i=1$, 2, and $\mathcal{A}'_{(1,1)}\subseteq\mathcal{A}_{(1,1)}$ such that
\begin{align*}
\langle\mathcal{A}_{i}\rangle_{T_{\mathbb{Z}_3}}&= \langle \mathcal{A}'_{i}\rangle_{T_{\mathbb{Z}_3}} \pmod{I};\\
\langle\mathcal{A}_{(1,1)}\rangle_{T_{\mathbb{Z}_3}}&= \langle \mathcal{A}'_{(1,1)}\rangle_{T_{\mathbb{Z}_3}} \pmod{I}.
\end{align*}
It follows 
\[
J=\langle \mathcal{A}'_{1}\cup \mathcal{A}'_{2}\cup \mathcal{A}'_{(1,1)}\rangle_{T_{\mathbb{Z}_3}} \pmod{I},
\]
therefore $J= \langle \mathcal{A}'_{1}\cup \mathcal{A}'_{2}\cup \mathcal{A}'_{(1,1)}\cup I\rangle_{T_{\mathbb{Z}_3}}$, and $J$ is finitely generated as a $T_{\mathbb{Z}_3}$-ideal.
\end{proof}

\subsection{The case $UT_3^{(-)}(F)$ in characteristic 2}
In this subsection we prove that the graded identities of $UT_3^{(-)}(F)$ do not satisfy the Specht property if $F$ is an infinite field of characteristic 2. 
\begin{lem}\label{conseck}
Let $F$ be an infinite field of characteristic $2$. For $k\geq 1$, define the polynomial 
\[
c_k=[z,y_1,\ldots, y_k, z]. 
\]
Consider $f$ a consequence of $c_k \pmod{I}$ such that $f$ is a multihomogeneous polynomial and $\deg f> \deg c_k$. Suppose that $f$  has degree $2$ in the variable $z$ and it is multilinear in the variables $y$'s, then $f$ is a linear combination of polynomials 
\[
[z, y_{i_1},\ldots, y_{i_t}, z, y_{i_t+1},\ldots y_{i_l}],
\]
where the rightmost block of variables $Y$ is not empty.
\end{lem}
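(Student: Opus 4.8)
The plan is to combine the standard description of the $T_{\mathbb{Z}_3}$-ideal generated by one polynomial with the structure of $\mathcal{L}(Y\cup Z\cup W)/I$ in low degrees. In a Lie algebra the ideal generated by an element $s$ is spanned by the left-normed products $[s,u_1,\ldots,u_m]$ with the $u_i$ arbitrary, and $I$ is already a $T_{\mathbb{Z}_3}$-ideal; hence any consequence of $c_k$ modulo $I$ can be written as
\[
f \equiv \sum_{\alpha} \lambda_\alpha\,[\psi_\alpha(c_k),\, u^{\alpha}_1, \ldots, u^{\alpha}_{m_\alpha}] \pmod I,
\]
with each $\psi_\alpha$ a graded endomorphism of $\mathcal{L}(Y\cup Z\cup W)$ and each $u^{\alpha}_j$ a graded polynomial. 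Since $f$ is multihomogeneous, one may pass to the homogeneous component of the multidegree of $f$ on both sides, and it therefore suffices to analyse one summand $[\psi(c_k),u_1,\ldots,u_m]$ inside the multidegree of $f$; here $\psi(c_k)=[\psi(z),\psi(y_1),\ldots,\psi(y_k),\psi(z)]$.

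The first step is to reduce the bracketing factors and the image of $z$. Modulo $I$, the part of $\mathcal{L}(Y\cup Z\cup W)$ of $\mathbb{Z}_3$-degree $0$ is spanned by the single variables $y_i$ (because $[y_i,y_j]\in I$), and its part of $\mathbb{Z}_3$-degree $1$ is spanned by the monomials of type I, namely $[z_a,y_{b_1},\ldots,y_{b_s}]$ with $s\ge 0$. Because $f$ involves no variable other than $z$ and the $y$'s, in the component of $\deg f$ every $u_j$ has zero degree in the variables $z$ and $w$, hence may be replaced modulo $I$ by a single variable $y$; likewise $\psi(z)$ may be replaced by a combination of monomials $[z_a,y_{b_1},\ldots,y_{b_s}]$. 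The key observation is that $z$ occurs \emph{twice} in $c_k$: if some summand of $\psi(z)$ had $s\ge 1$, then each $y_{b_r}$ would occur with degree at least $2$ in $\psi(c_k)$, contradicting the multilinearity of $f$ in the $y$'s, while a summand of $\psi(z)$ involving a $z$-variable other than $z$ would force that variable into $f$. Consequently, in the multidegree of $f$ the image $\psi(z)$ contributes only a scalar multiple of $z$, so that $\psi(c_k)$ is, modulo $I$, a linear combination of monomials $[z,y_{a_1},\ldots,y_{a_k},z]$ ending in $z$ and involving exactly $k$ of the $y$'s.

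Comparing total degrees then forces $m=\deg f-\deg c_k\ge 1$ in each surviving summand, so each summand of $f$ is, modulo $I$, a linear combination of left-normed brackets
\[
[\,[z,y_{a_1},\ldots,y_{a_k},z],\, y_{c_1},\ldots,y_{c_m}\,]
=[z,y_{a_1},\ldots,y_{a_k},z,y_{c_1},\ldots,y_{c_m}],
\]
i.e. of monomials of type III whose rightmost block $y_{c_1},\ldots,y_{c_m}$ is non-empty. Finally, using that $[v,y_i,y_j]\equiv[v,y_j,y_i]\pmod I$ for every element $v$ — a consequence of $[y_i,y_j]\in I$ generalizing (\ref{zcomutanUT3}) — I would reorder the variables inside each of the two blocks to bring every such monomial to the form stated in the lemma; the rightmost block remains non-empty. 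This proves the statement.

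The step I expect to be the main obstacle is the bookkeeping of multihomogeneous components, and specifically the verification that no cancellation among the summands can reinstate a monomial ending in $z$, i.e. one with empty rightmost block. This is precisely excluded by the degree count above: such a monomial has degree $k$ in the $y$'s and therefore lies in a multidegree strictly below that of $f$, so it cannot survive the extraction of the $\deg f$-component. Everything else reduces to the low-degree analysis of $\mathcal{L}(Y\cup Z\cup W)/I$ recalled above, which is routine given Theorem \ref{idutnf} and the identities (\ref{idut3}).
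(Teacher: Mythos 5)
There is a genuine gap, and it is located exactly where you yourself flag the ``main obstacle.'' The claim that, in the relevant multidegree component, $\psi(z)$ contributes only a scalar multiple of $z$ is false. You argue that a summand $[z,y_{b_1},\ldots,y_{b_s}]$ of $\psi(z)$ with $s\ge 1$ would force some $y_{b_r}$ to appear with degree at least $2$ in $\psi(c_k)$; but this holds only if $\psi(z)$ consists of that single summand. The admissible substitution $z\mapsto z+[z,y_{k+1},\ldots,y_{k+n}]$ is not excluded, and the component of $\psi(c_k)=[\psi(z),y_1,\ldots,y_k,\psi(z)]$ multilinear in $y_{k+1},\ldots,y_{k+n}$ is the sum of the two cross terms
\[
[[z,y_{k+1},\ldots,y_{k+n}],y_1,\ldots,y_k,z]+[z,y_1,\ldots,y_k,[z,y_{k+1},\ldots,y_{k+n}]].
\]
Neither is obtained by bracketing some $[z,y_{a_1},\ldots,y_{a_k},z]$ on the right by further $y$'s, so your structural description of the consequences simply omits them. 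Your degree count is also off: modulo $I$ the first cross term equals $[z,y_1,\ldots,y_{k+n},z]$, which has the same degree in the $y$'s as $f$ itself and ends in $z$; it does \emph{not} lie in a lower multidegree and so cannot be discarded by extracting the $\deg f$-component.

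These cross terms are in fact where all the content of the lemma resides. Writing $C=\{k+1,\ldots,k+n\}$ and expanding the second cross term via the derivation property of $\mathrm{ad}$ gives $\sum_{S\subseteq C}[z,y_1,\ldots,y_k,Y_S,z,Y_{S^c}]$, where $Y_S$ (resp.\ $Y_{S^c}$) lists the $y_i$ with $i\in S$ (resp.\ $i\in C\setminus S$); the summand with $S=C$ has empty right block and combines with the first cross term to give $2[z,y_1,\ldots,y_{k+n},z]$. This monomial survives unless $\ch F=2$. Your argument never invokes $\ch F=2$, and had it been correct it would prove the lemma over any infinite field, which is false. That is the clearest symptom that the core step cannot be repaired along these lines: the paper's proof is precisely the computation just sketched, and the characteristic-$2$ cancellation of the empty-right-block monomial is its whole point.
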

\begin{proof}
Note that the  multihomogeneous consequences of the polynomial $c_{k} \pmod{I}$,  that have degree $2$ in the variable $z$ and degree $1$ in all variables $y$, are obtained by applying a combination of the following rules:
\begin{itemize}
    \item Making substitutions of type $z\mapsto z+ [z,y_{i_1},\ldots, y_{i_t}]$.
    \item Making $[c_k,y_{i_1},\ldots, y_{i_t}]$.
\end{itemize}
Now replacing $z$ by $z+[z,y_{k+1},\ldots, y_{k+n}]$ in $c_{k}$ and taking the homogeneous component of degree $1$ in $y_{k+1}$, \dots, $y_{k+n}$, we obtain the following polynomial 
\[
g= [ z,y_1,\ldots,y_k,y_{k+1},\ldots,y_{k+n}, z]+[z,y_1,\ldots, y_k, [z, y_{k+1}, \ldots, y_{k+n}]]\pmod{I},
\]
where we apply to the first summand several times the identity (\ref{zcomutanUT3}). 

Let $C=\{k+1,\ldots, k+n\}$ and let $P(C)=\{S\mid S\subseteq C\}$ be the set of all subsets of $C$.  If $S=\{i_1,\ldots,i_t\}\subseteq C$, and its complement in $C$ is $S^c=C\setminus S=\{i_{t+1},\ldots, i_n\}$, we suppose that $i_1<\cdots < i_t$ and $i_{t+1}<\cdots < i_n$. Define
\[
[z,y_1,\ldots, y_k, Y_S, z, Y_{S^c}]= [z, y_1,\ldots, y_k, y_{i_1},\ldots, y_{i_t}, z, y_{i_{t+1}},\ldots, y_{i_n}].
\]
Recall that $\ch F=2$, so using several times the fact that $ad\, y=[* , y]$ is a derivation in every Lie algebra, we have 
\[
[z,y_1,\ldots, y_k, [z, y_{k+1}, \ldots, y_{k+n}]]=\sum\limits_{S\in P(C)} [z,y_1,\ldots, y_k, Y_S, z, Y_{S^c}],
\]
therefore 
\begin{align*}
g &= [ z,y_1,\ldots,y_k,y_{k+1},\ldots,y_{k+n}, z]+\sum\limits_{S\in P(C)} [z,y_1,\ldots, y_k, Y_S, z, Y_{S^c}]\\
&=  2 [ z,y_1,\ldots,y_k,y_{k+1},\ldots,y_{k+n}, z]+\sum\limits_{S\in P(C), S\neq C} [z,y_1,\ldots, y_k, Y_S, z, Y_{S^c}]\\
& =  \sum\limits_{S\in P(C), S\neq C} [z,y_1,\ldots, y_k, Y_S, z, Y_{S^c}],
\end{align*}
and the result follows.
\end{proof}

\begin{theo} 
If $F$ is an infinite field, $\ch F=2$, then $I$, the ideal of the $\mathbb{Z}_3$-graded identities of $UT_3^{(-)}$ does not have the Specht property.
\end{theo}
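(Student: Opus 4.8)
The plan is to establish the failure of the Specht property by producing an explicit $T_{\mathbb{Z}_3}$-ideal containing $I$ that is not finitely based. The polynomials appearing in Lemma~\ref{conseck} suggest taking, for $k\ge 1$, the polynomial $c_k=[z,y_1,\ldots,y_k,z]$ and setting
\[
J=\langle I\cup\{c_k\mid k\ge 1\}\rangle_{T_{\mathbb{Z}_3}},\qquad J_k=\langle I\cup\{c_1,\ldots,c_k\}\rangle_{T_{\mathbb{Z}_3}}.
\]
A one–step generic evaluation ($z\mapsto e_{12}+e_{23}$ and $y_i\mapsto e_{33}$ for all $i$) sends $c_k$ to $e_{13}\ne 0$ in characteristic $2$, so $c_k\notin I$ and $I\subsetneq J$. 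Moreover $J$ is finitely based if and only if $J=J_N$ for some $N$: a finite set of generators of $J$ would consist of consequences of $I$ and of finitely many $c_j$, hence would already lie in some $J_N$. Thus the whole theorem reduces to the single assertion
\[
c_{k+1}\notin J_k\qquad\text{for every }k\ge 1.
\]

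To prove this I would fix $k$ and pass to one fixed multidegree. Let $\delta$ be the multidegree of graded polynomials that use a single variable $z$ of degree $1$, with $z$ occurring with multiplicity $2$, are multilinear in the degree-$0$ variables $y_1,\ldots,y_{k+1}$, and involve no variable of degree $2$; let $V$ be the corresponding homogeneous component. Since $F$ is infinite, $J_k$ is spanned by its multihomogeneous elements, so modulo $I$ the subspace $V\cap J_k$ is spanned by the multihomogeneous consequences of $c_1,\ldots,c_k$ of multidegree $\delta$ (the consequences of $I$ contribute nothing modulo $I$). Any such consequence $f$ of some $c_j$, $1\le j\le k$, has $y$-degree $k+1$, hence $\deg f=k+3>j+2=\deg c_j$; it has degree $2$ in $z$ and is multilinear in the $y$'s; therefore Lemma~\ref{conseck} applies and shows that $f$ is, modulo $I$, a linear combination of type III monomials $[z,y_{i_1},\ldots,y_{i_t},z,y_{i_{t+1}},\ldots,y_{i_{k+1}}]$ whose rightmost block of $Y$-variables is nonempty.

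On the other hand $c_{k+1}=[z,y_1,\ldots,y_{k+1},z]$ is precisely the type III monomial of multidegree $\delta$ with \emph{empty} rightmost block. To conclude it suffices to check that, modulo $I$, the element $c_{k+1}$ does not lie in the span of the type III monomials of multidegree $\delta$ with nonempty rightmost block. This follows from the generic-matrix technique used for Lemmas~\ref{liut3} and \ref{liut3h}: under $z\mapsto e_{12}+e_{23}$ and $y_i\mapsto\operatorname{diag}(y_i^1,y_i^2,y_i^3)$, and after setting $y_i^1=0$, the monomial with left block $S\subseteq\{1,\ldots,k+1\}$ is sent to $\bigl(\sum_{A\subsetneq S}\prod_{i\in A}y_i^2\prod_{i\notin A}y_i^3\bigr)e_{13}$, so only $c_{k+1}$ (the case $S=\{1,\ldots,k+1\}$) produces monomials $\prod_{i\in A}y_i^2\prod_{i\notin A}y_i^3$ with $|A|=k$; hence no linear combination of the remaining monomials can equal $c_{k+1}$ modulo $I$. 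Combining the two preceding paragraphs, $c_{k+1}\notin J_k$ modulo $I$, and since $c_{k+1}\notin I$ we get $c_{k+1}\notin J_k$, as required.

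The step I expect to be the main obstacle is guaranteeing that, in the multidegree $\delta$ and modulo $I$, the $T_{\mathbb{Z}_3}$-ideal $J_k$ is governed \emph{only} by the consequences to which Lemma~\ref{conseck} applies — that is, confirming that every consequence of a $c_j$ landing in $\delta$ really is multihomogeneous of the shape required by that lemma and of degree exceeding $\deg c_j$, with no exotic substitution (into the repeated degree-$1$ variable $z$, or into the $y_i$) escaping its reach; here one uses the explicit list of graded identities of $UT_3^{(-)}(F)$ from Theorem~\ref{idutnf} (in particular $[z,w]\in I$ and $[z_1,z_2,z_3]\in I$) together with the classification of nonzero monomials into types I--IV. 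The decisive characteristic-$2$ phenomenon — that in every consequence of $c_k$ the empty-right-block monomial occurs with an even coefficient and therefore vanishes — is already isolated in Lemma~\ref{conseck}, so once this bookkeeping is in place the argument closes.
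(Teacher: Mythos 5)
Your proposal is correct and follows essentially the same strategy as the paper: take the chain $J_k=\langle I\cup\{c_1,\ldots,c_k\}\rangle_{T_{\mathbb{Z}_3}}$ with $c_k=[z,y_1,\ldots,y_k,z]$, invoke Lemma~\ref{conseck} to force every consequence landing in the target multidegree into the span of the $f_S$ with nonempty right block, and separate $c_{k+1}$ from that span by a generic diagonal evaluation. The only differences are cosmetic: the index shift ($c_{k+1}\notin J_k$ versus the paper's $c_k\notin J_{k-1}$) and your extra specialization $y_i^1=0$, which replaces the paper's two distinguished monomials $\gamma_1^1\prod_{i\ge 2}\gamma_i^2$ and $\gamma_1^3\prod_{i\ge 2}\gamma_i^2$ by the cleaner count of subsets $A$ of size $k$.
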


\begin{proof}
Given $k\geq 1$, as above, we consider the polynomials
\[
c_k(z,y_1,\ldots,y_k)= [z,y_1,\ldots, y_k,z].
\]
Note that by Theorem \ref{idutnf}, $c_k\neq 0 \pmod{I}$. 
We perform the following substitutions in the polynomial $c_k$ 
\begin{itemize}
\item $z= e_{12}+ e_{23}$,
\item $y_i= \gamma_i ^{1}e_{11}+ \gamma_i ^{2}e_{22}+ \gamma_i ^{3}e_{33}$,
\end{itemize}
where $\gamma_i^j$ are commuting independent variables. In this way we obtain the following expression
\begin{equation}\label{ck}
 \left(  \prod\limits_{i=1}^k (\gamma_i^1+ \gamma_i^2)+   \prod\limits_{i=1}^k (\gamma_i^2+ \gamma_i^3)\right) e_{13}
\end{equation}
Define
\[
h= \left(  \prod\limits_{i=1}^k (\gamma_i^1+ \gamma_i^2)+   \prod\limits_{i=1}^k (\gamma_i^2+ \gamma_i^3)\right), 
\]
a polynomial in the  commuting variables $\gamma_i^l$ where $1\leq i\leq k$ and $1\leq l \leq 3$. 

Let $I_k=\{1,\ldots, k\}$ and consider $S\subseteq I_k$ such that $S\neq \emptyset$ and $S\neq I_k$. 
We define the following polynomial
\[
f_S=[z, y_{i_1},\ldots, y_{i_t}, z, y_{i_{t+1}},\ldots, y_{i_k}],
\]
where $S=\{i_1,\ldots, i_t\}$, $S^c=\{i_{t+1},\ldots, i_k\}$, $i_1<\cdots< i_t$ and $i_{t+1}<\cdots<i_k$. By Theorem \ref{idutnf}, $f_S\neq 0\pmod {I}$. Making the above substitution in $f_S$, we get
\begin{equation}\label{cks}
 \left(  \prod\limits_{i\in S} (\gamma_i^1+ \gamma_i^2)+   \prod\limits_{i\in S} (\gamma_i^2+ \gamma_i^3)\right) \prod\limits_{j\in S^c} (\gamma_j^1+ \gamma_j^3)e_{13}.
\end{equation}
Consider 
\[
h_S= \left(  \prod\limits_{i\in S} (\gamma_i^1+ \gamma_i^2)+   \prod\limits_{i\in S} (\gamma_i^2+ \gamma_i^3)\right) \prod\limits_{j\in S^c} (\gamma_j^1+ \gamma_j^3).
\]
Observe that $h_S$ is a polynomial in the  commuting variables $\gamma_i^l$ where $1\leq i\leq k$ and $1\leq l \leq 3$. Notice that
\[
h\notin \spn_{F}\{h_S\mid S\subseteq I_k, S\neq I_k, S\neq\emptyset\},
\]
because the monomials $\gamma_1^{1}\prod_{i=2}^k \gamma_i^{2}$ and  $\gamma_1^{3}\prod_{i=2}^k \gamma_i^{2}$  appear in the polynomial $h$, but these monomials do not appear in any of the $h_S$.

Let $J_k=\langle c_1,\ldots c_k\rangle_{T_{\mathbb{Z}_3}} \pmod{I}$ and suppose $c_k\in J_{k-1}$. By Lemma \ref{conseck}, we get
\[
c_k=\sum\limits_{S\in\mathcal{P}} \alpha_{S} f_S
\]
where $\alpha_S\in F$ and $S\in\mathcal{P}=\{S\subseteq I_k\mid S\neq I_k, S\neq\emptyset\}$. Then (\ref{ck}) and (\ref{cks}) imply 
\[
h=\sum\limits_{S\in\mathcal{P}} \alpha_S h_S,
\]
but $h\notin \spn_{F}\{h_S\mid S\in\mathcal{P}\}$.
Therefore $c_k\notin J_{k-1}$ and we have that the following ascending chain of $T_{\mathbb{Z}_3}$-ideals modulo $I$
\[
J_1\subset J_2\subset\cdots\subset J_k\subset\cdots,
\]
where $J_i=\langle c_1,\ldots c_i\rangle_{T_{\mathbb{Z}_3}}$, is not stationary  (does not stabilize). Therefore the  $T_{\mathbb{Z}_3}$-ideal $I$ does not have the Specht property.
\end{proof}

\section{The graded identities of $T_{\mathbb{Z}_n}(UT_n(F))$}

Let $Z^{g_i}$ and $Y$ be disjoint infinite countable sets, $Z^{g_i}= \{z_1^{g_i},z_2^{g_i}\ldots,\}$ where $g_i\in\mathbb{Z}_n\setminus \{0\}$ and $Y=\{y_1,y_2,\ldots\}$. Consider the free Lie algebra
\[
\mathcal{L}_{\mathbb{Z}_n}=\mathcal{L}\bigg(\bigcup_{g_i\in\mathbb{Z}_n\setminus \{0\}} Z^{g_i}\cup Y\bigg)
\]
freely generated over $F$ by $\bigcup_{g_i\in\mathbb{Z}_n\setminus \{0\}} Z^{g_i}\cup Y$. Notice that the algebra $\mathcal{L}_{\mathbb{Z}_n}$ has a natural $\mathbb{Z}_n$-grading assuming the variables $Y$, $Z^{g_i}$ to be of homogeneous degrees $0$ and $g_i$, for $g_i\in\mathbb{Z}_n\setminus\{0\}$, respectively. Consider the algebra $UT_n(F)$  endowed with the canonical $\mathbb{Z}_n$-grading, and denote by $I$ the ideal  of $\mathbb{Z}_{n}$-graded identities for $UT_n(F)$.

By Theorem \ref{idutnf}, we get that the non-zero monomials in the algebra  $\mathcal{L}_{\mathbb{Z}_n}/I$ are of the following types
\begin{enumerate}[label=\Roman*.]
    \item $[z^{g_i},y_{i_1},\ldots,y_{i_s}]$,
    \item $[z^{g_{j_1}},y^{(1)}_{i_1},\ldots,y^{(1)}_{i_s},z^{g_{j_2}}, y^{(2)}_{l_1},\ldots,y^{(2)}_{l_m},\ldots, z^{g_{j_k}}, y^{(k)}_{t_1},\ldots,y^{(
    k)}_{t_n}]$,
\end{enumerate}
where $\sum\limits_{j=1}^k g_{i_j}\leq n-1$ and the indices of the variables $y$'s are ordered in non-decreasing way (that is their indices, in each group, increase with possible repetitions). Observe that some (or all) of the blocks of variables $y$'s can be empty in monomials of type II.

Let $z_1$, \dots, $z_{k}$ be variables of degree $g_1$, \dots, $g_k$, respectively, such that $\sum\limits_{i}^{k} g_i\leq n-1$ and $g_i\neq 0$. Denote by
\begin{align*}
[z_i,a_1y_1,&\ldots,a_ny_n]= [z,\underbrace{y_1,\ldots,y_1}_{a_1},\ldots,\underbrace{y_n,\ldots,y_n}_{a_n}]\\
[z_1,a_1^{(1)}y_1,&\ldots,a_n^{(1)}y_n,\ldots, z_k, a_1^{(k)}y_1,\ldots,a_n^{(k)}y_n]\\
&=[z_1,\underbrace{y_1,\ldots,y_1}_{a_1^{(1)}},\ldots,\underbrace{y_n,\ldots,y_n}_{a_n^ {(1)}}, \ldots, z_k, \underbrace{y_1,\ldots,y_1}_{a_1^{(k)}},\ldots, \underbrace{y_n,\ldots, y_n}_{{a_n}^{(k)}}].
\end{align*}
Define the following sets
\begin{align*}
&\mathcal{B}_{g_i}=\{[z_{i},a_1^{(1)}y_1,\ldots,a_n^{(1)}y_n]\mid a_i>0,\ n\geq 1\};\\
&\mathcal{B}_{(g_1,\ldots, g_k)}=\{[z_1,a_1^{(1)}y_1,\ldots,a_n^{(1)} y_n, z_{\sigma(2)},a_1^{(2)}y_1,\ldots,a_n^{(2)}y_n,\ldots, z_{\sigma(k)},a_1^{(k)}y_1,\ldots,a_n^{(k)}y_n]\},
\end{align*}
where $\sigma\in S_k$, $\sigma(1)=1$, $a^{(j)}_i\geq 0$, $1\leq j\leq n$, and $n\geq 0$. 

\begin{defi}
Let $f=[z_{i},a_1y_1,\ldots,a_ny_n]$, $g =[z_{i},a'_1y_1, \ldots, a'_my_m]$ be polynomials in $\mathcal{B}_{g_i}$. Consider the finite sequences in $D(\mathbb{N})$
\[
V_f=(a_1,\ldots, a_n), \quad V_g= (a'_1,\ldots, a'_m).
\]
We define the following order on $\mathcal{B}_{g_i}$: $f\leq_{\mathcal{B}_{g_i}} g$ whenever $V_f\leq V_g$.
\end{defi}

\begin{prop}\label{Bgiutn}
Consider the set $\mathcal{B}_{g_i}$ and let $f$, $g\in  \mathcal{B}_{g_i}$
be two polynomials in $\mathcal{B}_i$.
\begin{enumerate}[label=\roman*.]
\item 
If $f\leq_{\mathcal{B}_{g_i}} g$, then $g$ is a consequence of $f$ modulo $I$.
\item 
$(\mathcal{B}_{g_i}, \leq_{\mathcal{B}_{g_i}})$ satisfies the f.b.p.
\item 
Let $J$ be a $T_{\mathbb{Z}_n}$-ideal such that $I\subseteq J$. Consider the following sets 
\[
\mathcal{A}_{g_i}=\{f\in J\mid f\in \mathcal{B}_{g_i}\}.
\]
Then there exist finite subsets $\mathcal{A}'_{g_i}\subseteq \mathcal{A}_{g_i}$  such that 
\[
\langle \mathcal{A}_{g_i}\rangle_{T_{\mathbb{Z}_n}}= \langle \mathcal{A}'_{g_i}\rangle_{T_{\mathbb{Z}_n}} \pmod{I}.
\]
\end{enumerate}
\end{prop}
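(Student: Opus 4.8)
The plan is to mimic verbatim the arguments already carried out for $n=2$ (Propositions~\ref{conseut2} and~\ref{Ut2fbp}) and for $n=3$ (Proposition~\ref{fbporUT3} and Corollary~\ref{Bifinito}), since $\mathcal{B}_{g_i}$ is the evident $\mathbb{Z}_n$-analogue of $\mathcal{B}_1$ and $\mathcal{B}_2$, and to prove the three items in order.

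For the first item I would begin by recording that, modulo $I$, the degree-$0$ variables commute inside a type~I monomial: from the Jacobi identity one has $[z^{g_i},y_a,y_b]-[z^{g_i},y_b,y_a]=[z^{g_i},[y_a,y_b]]$, and $[y_a,y_b]\in I$ by Theorem~\ref{idutnf} (it is the identity $[x_1^{(0)},x_2^{(0)}]$), so $[z^{g_i},y_a,y_b]\equiv[z^{g_i},y_b,y_a]\pmod I$, exactly as in~(\ref{zcomutanUT3}). Now given $f\leq_{\mathcal{B}_{g_i}}g$ with $V_f=(a_1,\dots,a_n)\leq V_g=(a_1',\dots,a_m')$, fix an order-preserving injection $\varphi$ with $\varphi(n)\leq m$ and $a_j\leq a_{\varphi(j)}'$. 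Relabelling $y_j\mapsto y_{\varphi(j)}$ in $f$ produces a consequence of $f$; bracketing it on the right with $(a_{\varphi(j)}'-a_j)$ further copies of each $y_{\varphi(j)}$, and then with $a_l'$ copies of $y_l$ for every index $l\notin\varphi(\{1,\dots,n\})$, and finally using the commutativity just recorded to sort the $Y$-tail into non-decreasing order, yields $g$ modulo $I$.

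For the second item I would use that $f\mapsto V_f$ is an injection of $\mathcal{B}_{g_i}$ into $D(\mathbb{N})$ which, by definition of $\leq_{\mathcal{B}_{g_i}}$, satisfies $f\leq_{\mathcal{B}_{g_i}}g$ precisely when $V_f\leq V_g$; an infinite antichain in $\mathcal{B}_{g_i}$ would therefore map to an infinite antichain in $D(\mathbb{N})$, contradicting Theorem~\ref{eqhigman} since $D(\mathbb{N})$ has the f.b.p.\ by Theorem~\ref{DP}. The third item is then the routine combination of the first two: by the second item and Theorem~\ref{eqhigman} there is a finite $\mathcal{A}'_{g_i}\subseteq\mathcal{A}_{g_i}$ with $\mathcal{A}'_{g_i}\subseteq\mathcal{A}_{g_i}\subseteq\overline{\mathcal{A}'_{g_i}}$; each $g\in\mathcal{A}_{g_i}$ then satisfies $f\leq_{\mathcal{B}_{g_i}}g$ for some $f\in\mathcal{A}'_{g_i}$, hence $g\in\langle f\rangle_{T_{\mathbb{Z}_n}}\subseteq\langle\mathcal{A}'_{g_i}\rangle_{T_{\mathbb{Z}_n}}\pmod I$ by the first item, giving $\langle\mathcal{A}_{g_i}\rangle_{T_{\mathbb{Z}_n}}=\langle\mathcal{A}'_{g_i}\rangle_{T_{\mathbb{Z}_n}}\pmod I$.

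I do not expect a genuine obstacle here: all three items are transcriptions of the $n\le 3$ cases, with the combinatorial content entirely absorbed into Higman's theorem on $D(\mathbb{N})$. The only point that requires a line of care is checking that the defining relations of $I$ force the full commutativity of the $Y$-tail in a type~I monomial irrespective of the homogeneous degree $g_i$ of the leading $z$-variable --- and, as indicated above, this is immediate from the Jacobi identity together with $[x_1^{(0)},x_2^{(0)}]\in I$.
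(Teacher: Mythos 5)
Your proposal is correct and follows exactly the route the paper intends: the paper itself omits the argument, remarking only that it is ``completely analogous to the proofs of Proposition~\ref{fbporUT3} and Corollary~\ref{Bifinito},'' and your three steps are precisely that analogy written out, including the one point of substance (that $[z^{g_i},y_a,y_b]\equiv[z^{g_i},y_b,y_a]\pmod I$ holds for any nonzero homogeneous degree $g_i$ via Jacobi and $[x_1^{(0)},x_2^{(0)}]\in I$). Nothing further is needed.
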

\begin{proof}
The proof of this proposition is completely analogous to the proofs of Proposition \ref{fbporUT3} and Corollary \ref{Bifinito}, and that is why we omit it.
\end{proof}
\begin{defi}\label{ordeg1gk}
Let $f$, $g\in\mathcal{B}_{(g_1,\ldots, g_k)}$ be respectively the polynomials
\begin{align*}
[z_1,a_1^{(1)}y_1,\ldots,a_n^{(1)}y_n, z_{\sigma(2)},a_1^{(2)}y_1,\ldots,a_n^{(2)}y_n,\ldots, z_{\sigma(k)}, ,a_1^{(k)}y_1,\ldots,a_n^{(k)}y_n],\\
[z_1,{a'_1}^{(1)}y_1,\ldots,{a'_m}^{(1)}y_m, z_{\sigma(2)},{a'_1}^{(2)}y_1,\ldots,{a'_m}^{(2)}y_m,\ldots, z_{\sigma(k)}, {a'_1}^{(k)}y_1,\ldots,{a'_m}^{(k)}y_m]
    \end{align*}
where $\sigma$, $\tau\in S_k$ are such that $1=\sigma(1)=\tau(1)$. Consider the finite sequences
\begin{align*}
V_f&=((a^{(1)}_1,\ldots,a^{(k)}_1),\ldots, (a^{(1)}_n,\ldots,a^{(k)}_n)), \\ V_g&=(({a'_1}^{(1)},\ldots,{a'_1}^{(k)}),\ldots, ({a'_m}^{(1)},\ldots,{a'_m}^{(k)})).
\end{align*}
If $V_f\leq_k V_g$ and $\sigma=\tau$, we write $f\leq_{\mathcal{B}_{(g_1,\ldots, g_k)}} g$.
\end{defi}
Recall that  $V_f$, $V_g\in D({\mathbb{N}_0}^k)$. The order $\leq_k$ coincides with the order defined in Example \ref{odernk}.
\begin{lem}
Let $f\in\mathcal{B}_{g_1,\ldots,g_k}$ be such that
\[
f(z_1,\ldots,z_k,y_1,\ldots,y_k)=[z_1,y_1,z_2,y_2,\ldots,z_k,y_k].
\]
Then the polynomial
\[
g=[z_1,y_1,\ldots,z_{i-1},y_{i-1}, z_i, y, y_i,z_{i+1},\ldots, z_k,y_k]\in\langle f\rangle_{T_{\mathbb{Z}_n}} \pmod{I}.
\]
\end{lem}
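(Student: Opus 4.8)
The plan is to argue by induction on $i\in\{1,\dots,k\}$, using a single admissible substitution of the form $z_i\mapsto[z_i,y]$ together with the Jacobi identity. The point to keep in mind is that $[z_i,y]$ is homogeneous of the same $\mathbb{Z}_n$-degree $g_i$ as $z_i$ (since $\deg y=0$), so replacing $z_i$ by $[z_i,y]$ in $f$ is a graded endomorphism of $\mathcal{L}_{\mathbb{Z}_n}$, and whatever it produces from $f$ is automatically a consequence of $f$ modulo $I$. I would also record at the outset the ``block permutation'' relations, i.e.\ the $\mathbb{Z}_n$-analogues of (\ref{zcomutanUT3})--(\ref{zwcomutanUT3}): modulo $I$ one may freely reorder the $y$-variables lying in one and the same block of a monomial of type II. These follow from Jacobi, since $[u,y_a,y_b]-[u,y_b,y_a]=[u,[y_a,y_b]]$ while $[y_a,y_b]\in I$ by Theorem \ref{idutnf}, whence $[u,[y_a,y_b]]\in I$ because $I$ is an ideal. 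The base case $i=1$ is then immediate: substituting $z_1\mapsto[z_1,y]$ in $f$ and recalling that the brackets are left normed gives $[[z_1,y],y_1,z_2,y_2,\dots,z_k,y_k]=[z_1,y,y_1,z_2,y_2,\dots,z_k,y_k]=g$.

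For the inductive step, fix $i\geq 2$, write $A=[z_1,y_1,\dots,z_{i-1},y_{i-1}]$, and let $T$ abbreviate the tail $y_i,z_{i+1},y_{i+1},\dots,z_k,y_k$, so that $f=[A,z_i,T]$ and $g=[A,z_i,y,T]$. Substituting $z_i\mapsto[z_i,y]$ in $f$ produces the consequence $h=[A,[z_i,y],T]$. I would then use the derivation property of $\mathrm{ad}\,A$ to split $[A,[z_i,y]]=[[A,z_i],y]-[[A,y],z_i]$; by multilinearity of the Lie bracket this yields $h=g-g'$, where $g'=[[A,y],z_i,T]$. The crucial observation is that $[A,y]=[z_1,y_1,\dots,z_{i-1},y_{i-1},y]$ equals $[z_1,y_1,\dots,z_{i-1},y,y_{i-1}]$ modulo $I$ by the block-permutation relations, so that $g'$ coincides modulo $I$ with the polynomial in the statement for the index $i-1$ (the one obtained by inserting $y$ just after $z_{i-1}$). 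By the inductive hypothesis $g'$ is a consequence of $f$ modulo $I$, and so is $h$; hence $g=h+g'$ is one as well, which closes the induction.

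The computations are routine; the one genuinely delicate point — and the step I would verify most carefully — is the identification of the ``error term'' $g'$ with the statement at index $i-1$, that is, checking that once the two $y$'s inside the relevant block are put in standard order, $g'$ is literally an instance of the lemma one step earlier. This matching is precisely what makes the induction close, so it is worth carrying out the bookkeeping against the conventions for monomials of type II with care.
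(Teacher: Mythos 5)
Your proposal is correct and takes essentially the same approach as the paper: both rely on the substitution $z_s\mapsto[z_s,y]$ followed by the Jacobi identity to produce $[A,z_s,y,B]-[A,y,z_s,B]$, and on the fact that the $y$'s within a single block commute modulo $I$. The only cosmetic difference is that the paper writes the result directly as the telescoping sum $g\equiv\sum_{s=1}^i f_s\pmod{I}$, whereas you organize the same cancellation as an induction on $i$; unwinding your induction reproduces the paper's telescoping identity.
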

\begin{proof}
If $i=1$ then $g= f([z_1,y],z_2,\ldots, z_k,y_1,\ldots, y_k) \in\langle f\rangle_{T_{\mathbb{Z}_n}}\pmod{I}$. If $i=k$ then $g=[f,y]$ and $g$ is a consequence of $f$.

Thus we suppose $1<i<k$. Let $f_s$ be the polynomial obtained replacing the variable $z_s$ by $[z_s,y]$ in $f$, where $1\leq s\leq i$. Notice that $f_s$ equals 
\[
[z_1,y_1,\ldots,z_s,y,y_s,z_{s+1},y_{s+1}\ldots,z_k, y_k]- [z_1,y_1,\ldots, z_{s-1}, y_{s-1}, y, z_s,y_s,\ldots, z_k,y_k],
\]
where $s\in\{1,\ldots, i\}$.
Recall that $[y,y']=0$ mod $I$, thus one has
\[
g=\sum\limits_{s=1}^i f_s \pmod{I}.
\]
Therefore $g$ is a consequence of $f$ modulo $I$.
\end{proof}

The previous lemma has as a direct consequence the following result. 
\begin{prop}\label{conseutn}
Let $f$, $g\in \mathcal{B}_{(g_1,\ldots, g_k)}$ be polynomials such that $V_f\leq_{\mathcal{B}_{(g_1,\ldots, g_k)}} V_g$. Then $g\in\langle f\rangle_{T_{\mathbb{Z}_n}}$ modulo $I$.
\end{prop}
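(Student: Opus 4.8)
The plan is to deduce this statement from the previous lemma by the same mechanism that was used in the proofs of Propositions \ref{conseut2} and \ref{conseUT3}: combine a variable-renaming step (to realize the subsequence condition coming from $V_f \le_k V_g$) with repeated multiplications on the right and substitutions of the form $z_s \mapsto [z_s, \text{stuff}]$ (to build up the extra copies of each $y_i$), using throughout the commutativity relations on blocks of $Y$-variables modulo $I$ that follow from Theorem \ref{idutnf} (the $n=3$ analogues being (\ref{zcomutanUT3})--(\ref{zwcomutanUT3})).

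First I would unwind the hypothesis. Write $V_f=((a_1^{(1)},\ldots,a_1^{(k)}),\ldots,(a_n^{(1)},\ldots,a_n^{(k)}))$ and $V_g=(({a'_1}^{(1)},\ldots,{a'_1}^{(k)}),\ldots,({a'_m}^{(1)},\ldots,{a'_m}^{(k)}))$; by Definition \ref{ordeg1gk}, $V_f \le_{\mathcal{B}_{(g_1,\ldots,g_k)}} V_g$ means that $\sigma=\tau$ (so $f$ and $g$ have the same pattern of $z$'s) and there is an order-preserving injection $\varphi\colon\{1,\ldots,n\}\to\{1,\ldots,m\}$ with ${a'_{\varphi(r)}}^{(j)}\ge a_r^{(j)}$ for all $r$ and all $j$. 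Relabel so that $\varphi(r)=i_r$ with $i_1<\cdots<i_n$. The first operation is to rename the variable $y_r$ to $y_{i_r}$ in $f$ for each $r$; the result $f'$ lies in the same $T_{\mathbb{Z}_n}$-ideal as $f$ and has the block data $(a_r^{(j)})$ sitting in positions $i_1,\ldots,i_n$.

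Next I would raise the exponent of each $y_{i_r}$ in each of the $k$ slots from $a_r^{(j)}$ to ${a'_{i_r}}^{(j)}$. Fixing a single index $i_r$ and a single slot $j$, inserting one more copy of $y_{i_r}$ into the $j$-th block — when $j=k$ this is just $[\,\cdot\,,y_{i_r}]$, and when $j<k$ this is the substitution $z_{\sigma(j)}\mapsto[z_{\sigma(j)},y_{i_r}]$ followed by cancelling the unwanted term using $[y,y']\equiv 0\pmod I$ — is exactly the content of the Lemma preceding the Proposition (with the $y$'s in the lemma playing the role of the extra copies), so each such insertion keeps us inside $\langle f\rangle_{T_{\mathbb{Z}_n}}\pmod I$. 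Iterating over all $r$, all $j$, and all the required extra copies, and using the commutativity of $Y$-blocks modulo $I$ to sort the indices back into non-decreasing order in every block, produces a polynomial congruent mod $I$ to $[z_1,{a'_{i_1}}^{(1)}y_{i_1},\ldots,{a'_{i_n}}^{(1)}y_{i_n},z_{\sigma(2)},\ldots]$. Finally, for the indices $l\in\{1,\ldots,m\}\setminus\{i_1,\ldots,i_n\}$ I would insert ${a'_l}^{(j)}$ copies of $y_l$ into the $j$-th block for each $j$ by the same operations, and reorder; this yields precisely $g$ modulo $I$.

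The main obstacle I anticipate is bookkeeping in the middle step rather than any real difficulty: one must be careful that the substitution $z_{\sigma(j)}\mapsto[z_{\sigma(j)},y]$ produces, besides the desired term, a correction term in which $y$ sits to the \emph{left} of $z_{\sigma(j)}$ (inside the $(j-1)$-st block), and one must argue — exactly as in the preceding Lemma — that by performing this substitution simultaneously for a suitable telescoping family of the $z$'s and adding, or by invoking $[y,y']\equiv 0\pmod I$, all such corrections cancel and only the term with $y$ correctly placed in the $j$-th block survives. Once the Lemma is in hand this cancellation is automatic, so the Proposition is indeed ``a direct consequence'' as claimed, and the remaining verifications (that renaming variables and right-multiplication preserve the $T_{\mathbb{Z}_n}$-ideal, and that the $Y$-block relations let one re-sort indices) are routine.
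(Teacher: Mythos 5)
Your proposal is correct and matches the paper's intended reading: the paper states Proposition \ref{conseutn} is a direct consequence of the preceding lemma, and you have spelled this out — rename variables to realize the order-preserving injection, iterate the insertion mechanism from the lemma (the telescoping family $z_s\mapsto[z_s,y]$, $s=1,\ldots,j$, together with $[y,y']\equiv 0\pmod I$) to place each extra copy of a $y$-variable into the required block, then re-sort within blocks. Your opening description of the $j<k$ case as ``a substitution followed by cancelling the unwanted term'' is slightly loose on its own, but you correct it in the final paragraph by noting that the full telescoping sum is what makes the corrections disappear, so the argument as a whole is sound.
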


\begin{prop}
The set $(\mathcal{B}_{(g_1,\ldots, g_k)}, \leq_{\mathcal{B}_{(g_1,\ldots, g_k)}})$ satisfies the f.b.p.
\end{prop}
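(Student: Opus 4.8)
The plan is to reduce the claim to the finite basis property of the set $D({\mathbb{N}_0}^k)$, exactly in the spirit of Propositions~\ref{wellqorderUT3} and~\ref{Bgiutn}. Recall from Definition~\ref{ordeg1gk} that each $f\in\mathcal{B}_{(g_1,\ldots,g_k)}$ carries two pieces of data: a permutation $\sigma\in S_k$ with $\sigma(1)=1$, recording the order in which the variables $z_2,\ldots,z_k$ appear, and a finite sequence $V_f\in D({\mathbb{N}_0}^k)$ recording the multiplicities of the $y$'s in the $k$ blocks. The order $\leq_{\mathcal{B}_{(g_1,\ldots,g_k)}}$ declares $f\leq g$ precisely when the two permutations coincide \emph{and} $V_f\leq_k V_g$. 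So as a quasi-ordered set, $\mathcal{B}_{(g_1,\ldots,g_k)}$ decomposes as a disjoint union, over the finitely many permutations $\sigma\in S_k$ with $\sigma(1)=1$, of copies of $(D({\mathbb{N}_0}^k),\leq_k)$.

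The key steps, in order, are as follows. First I would partition $\mathcal{B}_{(g_1,\ldots,g_k)}=\bigsqcup_{\sigma}\,\mathcal{B}^{\sigma}_{(g_1,\ldots,g_k)}$, where $\mathcal{B}^{\sigma}_{(g_1,\ldots,g_k)}$ consists of those polynomials whose associated permutation is $\sigma$; there are $(k-1)!$ such parts, a finite number. Second, for each fixed $\sigma$ the assignment $f\mapsto V_f$ is a bijection between $\mathcal{B}^{\sigma}_{(g_1,\ldots,g_k)}$ and $D({\mathbb{N}_0}^k)$ which, by Definition~\ref{ordeg1gk}, is an isomorphism of quasi-ordered sets onto $(D({\mathbb{N}_0}^k),\leq_k)$. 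Third, by Example~\ref{odernk} (which invokes Proposition~\ref{proordem} and Theorem~\ref{DP}), $(D({\mathbb{N}_0}^k),\leq_k)$ has the f.b.p.; hence each part $\mathcal{B}^{\sigma}_{(g_1,\ldots,g_k)}$ has the f.b.p. Finally, invoking part~i of Proposition~\ref{proordem}, a finite disjoint union of quasi-ordered sets each having the f.b.p.\ again has the f.b.p.\ (one simply notes that an infinite sequence in the union admits an infinite subsequence inside one part, and applies Theorem~\ref{eqhigman} there), so $(\mathcal{B}_{(g_1,\ldots,g_k)},\leq_{\mathcal{B}_{(g_1,\ldots,g_k)}})$ satisfies the f.b.p. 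Alternatively, one can argue directly via Theorem~\ref{eqhigman}(iv): given an infinite sequence $\{f_i\}$, by pigeonhole infinitely many share a common permutation $\sigma$, and among those, since $D({\mathbb{N}_0}^k)$ is partially well ordered, there is an infinite ascending subsequence with respect to $\leq_k$, hence with respect to $\leq_{\mathcal{B}_{(g_1,\ldots,g_k)}}$.

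There is essentially no obstacle here: the content was already deposited in the earlier preliminaries (Higman's theorems and Example~\ref{odernk}), and the only new observation is that adjoining the permutation datum $\sigma$ merely refines $\mathcal{B}_{(g_1,\ldots,g_k)}$ into finitely many pieces, which is harmless for the f.b.p.\ by Proposition~\ref{proordem}(i). If anything deserves care, it is the verification that $f\mapsto V_f$ really is order-reflecting on each part — that is, that $V_f\leq_k V_g$ together with equal permutations is not only sufficient (Definition~\ref{ordeg1gk}) but is literally the defining condition, so no information is lost — but this is immediate from the definition. Thus the proof is a short pigeonhole-plus-Higman argument, parallel to Proposition~\ref{wellqorderUT3}.
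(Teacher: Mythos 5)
Your proof is correct and takes essentially the same route as the paper: the paper likewise uses pigeonhole on the finitely many permutations $\sigma$ to extract an infinite subsequence within a single permutation class, and then appeals to the partial well-ordering of $D({\mathbb{N}_0}^k)$ (via Theorem~\ref{eqhigman}) to conclude. Your packaging of the pigeonhole step as a finite disjoint-union argument via Proposition~\ref{proordem}(i) is just a tidier bookkeeping of the same idea.
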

\begin{proof}
Suppose that there is an infinite sequence $\{f_i\}_{i\geq 1}$ of  pairwise incomparable elements in $\mathcal{B}_{(g_1,\ldots, g_k)}$ with respect to the order $\leq_{\mathcal{B}_{(g_1,\ldots, g_k)}}$.  Since the symmetric group $S_k$  is a finite set, we obtain an infinite subsequence $\{f_{i_j}\}_{j\geq i}$
from $\{f_i\}_{i\geq 1}$, such that 
the permutations $\sigma_{i_j}$ are the same for all $j\geq 1$. Hence, the previous subsequence induces the sequence
$\{V_{f_{i_j}}\}_{j\geq i}$ in  $D({\mathbb{N}}^k)$. 
By Definition \ref{ordeg1gk}, these elements are pairwise incomparable with respect to the order $\leq_k$. But this is absurd because $(D({\mathbb{N}_0}^k),\leq_k)$ is partially well-ordered.
\end{proof}

\begin{prop}\label{liutn}
Let $n\geq 3$ and take $z_1$, \dots, $z_k$ variables of degrees $g_1$, \dots, $g_k$, respectively, such that $g_i\ne 0$ and $\sum\limits_{i=1}^k g_i\leq n-1$. Let $y_i$ be variables of degree 0. Then the commutators 
\[
c=[z_1, y_1, \ldots, y_{t_1}, z_2, y_{t_1+1}, \ldots, y_{t_1+t_2}, z_3, \ldots, z_k, y_{t_1+\cdots+t_{k-1}+1}, \ldots, t_{t_1+\cdots+t_k}]
\]
are linearly independent modulo the graded identities for $UT_n(F)^{(-)}$.
\end{prop}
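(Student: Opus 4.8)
The plan is to adapt the generic-matrix computation of Lemmas \ref{liut3} and \ref{liut3h} to the $n\times n$ case. Since $F$ is infinite, it suffices to exclude a nontrivial relation $\sum_c\alpha_c c\in I$ among commutators $c$ that are multilinear in the same finite set of variables $y_1,\dots,y_m$ (each appearing once) and linear in the same $z_1,\dots,z_k$; for a fixed such $m$, a commutator $c$ as in the statement is determined by the composition $(t_1,\dots,t_k)$ of $m$ recording the sizes of the $k$ blocks of $y$'s, equivalently by the partition of $\{1,\dots,m\}$ into $k$ consecutive blocks $B_1,\dots,B_k$ (with $B_k$ the $y$'s after $z_k$, some blocks possibly empty).

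First I would substitute the $z$'s by matrix units realizing their degrees. Put $p_0=1$ and $p_j=1+g_1+\cdots+g_j$ for $1\le j\le k$; because $g_i\ne 0$ and $\sum_{i=1}^k g_i\le n-1$ the integers $1=p_0<p_1<\cdots<p_k\le n$ are strictly increasing, so $z_j\mapsto e_{p_{j-1}p_j}$ is an admissible specialization, as $\Gdeg e_{p_{j-1}p_j}=p_j-p_{j-1}=g_j$. For the $y$'s I would take generic diagonal matrices $y_i=\sum_{s=1}^n y_i^s e_{ss}$ with commuting independent scalars $y_i^s$. Using $[e_{p_{j-1}p_j},e_{p_jp_{j+1}}]=e_{p_{j-1}p_{j+1}}$ (the second term of the commutator vanishing since the $p$'s are pairwise distinct) and $[e_{1p_j},\sum_s\lambda_s e_{ss}]=(\lambda_{p_j}-\lambda_1)e_{1p_j}$, the left-normed bracket unwinds to
\[
c=\Big(\prod_{i\in B_1}(y_i^{p_1}-y_i^{1})\Big)\Big(\prod_{i\in B_2}(y_i^{p_2}-y_i^{1})\Big)\cdots\Big(\prod_{i\in B_k}(y_i^{p_k}-y_i^{1})\Big)\,e_{1p_k},
\]
since every intermediate product of these matrix units is again a single matrix unit, with no cancellation.

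Next, given a commutator $c$ occurring with $\alpha_c\ne 0$, I would separate it from the others by a numerical specialization of the $y_i^s$: for $i\in B_j$ (the $j$-th block of $c$), set $y_i^{p_j}=1$ and $y_i^s=0$ for $s\ne p_j$. As $p_j>p_0=1$ for $j\ge1$, the factor $(y_i^{p_j}-y_i^{1})$ of $c$ becomes $1$ for every $i$, so $c$ specializes to $e_{1p_k}\ne 0$. For any other admissible commutator $c'$, with blocks $B'_1,\dots,B'_k$, the tuple of blocks differs from that of $c$, hence (assuming $m\ge1$; the case $m=0$ is trivial) some $i$ satisfies $i\in B_j$ and $i\in B'_{j'}$ with $j'\ne j$; then the $i$-th factor of $c'$ is $(y_i^{p_{j'}}-y_i^{1})=0-0=0$ under this specialization, so $c'$ specializes to $0$. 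Evaluating $\sum_c\alpha_c c\in I$ on these substitutions gives $\alpha_c e_{1p_k}=0$, whence $\alpha_c=0$, a contradiction; this establishes the linear independence.

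I expect the matrix computation to be routine — the key point being that, because $\sum g_i\le n-1$, the column indices $p_0<\cdots<p_k$ stay within $\{1,\dots,n\}$ and are all distinct, so every product of the relevant $e_{ab}$'s is again a matrix unit. The one point requiring a little care is the combinatorial step that two distinct compositions force two indices to be assigned to different blocks, together with the observation that $p_j\ne p_0$ for $j\ge1$ (which is exactly where $g_1\ne 0$ is used) so that the separating substitution behaves as claimed.
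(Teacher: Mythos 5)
Your proof is correct and follows essentially the same route as the paper's: substitute the $z_j$'s by the staircase matrix units $e_{p_{j-1}p_j}$, the $y_i$'s by generic diagonal matrices to get the product of factors $y_i^{p_j}-y_i^1$, and then specialize the $y_i^s$ to kill all but one commutator. One small point: the paper's proof additionally observes (via what it calls a ``staircase argument'') that under this same substitution any commutator in which $z_2,\dots,z_k$ appear in a different order evaluates to zero, because the matrix units fail to chain; this is needed because the proposition is later invoked for the full family $\mathcal{B}_{(g_1,\ldots,g_k)}$, which allows permuting $z_2,\dots,z_k$. Your substitution already yields that automatically, but it is worth stating explicitly since your write-up only compares commutators with the $z_j$'s in the fixed order of the statement.
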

\begin{proof}
Suppose that $m-1=g_1+\cdots+g_k$ and consider first the substitutions: 
\[
z_1=e_{1,g_1+1}, \quad z_2=e_{g_1+1, g_1+g_2+1},  \ldots, \quad z_k=e_{g_1+\cdots+g_{k-1}+1, g_1+\cdots+g_k+1}.
\]
Pay attention $g_1+\cdots+g_k+1=m\le n$.

A standard staircase argument shows that, fixing $z_1$ at the leftmost position in the commutator, the only permutation of the $z_2$, \dots, $z_k$ that yields a nonzero element will be as in the commutator $c$ given in the statement of the proposition.

Computing $c$ with the above substitution of the $z_i$, and assuming $y_i= y_i^1e_{11}+y_i^2 e_{22}+\cdots+y_i^ne_{nn}$ where the $y_i^j$ are commuting independent variables, yields
\[
c=\prod_{s=1}^k \prod_i (y_i^{g_1+\cdots+g_s+1} - y_i^1) e_{1m}.
\]
Here in the second product, $i$ ranges:
\begin{itemize}
\item From $1$ to $t_1$, if $s=1$.
\item From $t_1+\cdots+t_{s-1}+1$ to $t_1+\cdots+t_s$, if $s\geq 2$.
\end{itemize}
Now make the following substitutions for the $y_i$.

First put all of the $y_i^j=0$ \textbf{except for}:

\begin{itemize}
\item
$i=1$, \dots, $t_1$: here define $y_i^{g_1+1}=1$.
\item
$i=t_1+1$, \dots, $t_1+t_2$: here define $y_i^{g_1+g_2+1}=1$.
\item
And so on, for $i=t_1+\cdots+t_{s-1}+1$ to $i=t_1+\cdots+t_s$ define $y_i^{g_1+\cdots+g_s+1}=1$.
\end{itemize}
Such a substitution vanishes all commutators but $c$, and $c$ evaluates to $c=e_{1m}$. 

Let us suppose there is a nontrivial linear combination among multihomogeneous commutators of the type $c$, such that $c$ participates with a nonzero coefficient in it. 

First we get, by means of the above substitution for the $z_i$, the correct order for the $z_2$, \dots, $z_k$. Then we make the substitutions for the $y_i$ as above. All commutators vanish except for $c$. This implies the coefficient of $c$ must be 0, a contradiction. This proves the linear independence.
\end{proof}

\begin{prop}\label{liutnh}
Let $n\geq 3$ and let $z_1$, \dots, $z_k$ be variables of degrees $g_1$, \dots, $g_k$, respectively, such that $g_i\ne 0$ and $\sum_{i=1}^k g_i\leq n-1$. Take $y_i$ variables of degree 0. Then the commutators 
\[
c=[z_1,a_1^{(1)}y_1,\ldots,a_t^{(1)}y_t, z_{2},a_1^{(2)}y_1,\ldots,a_t^{(2)}y_t,\ldots, z_{k}, a_1^{(k)}y_1,\ldots,a_t^{(k)}y_t]
\]
are linearly independent modulo the graded identities of $UT_n(F)^{(-)}$.
\end{prop}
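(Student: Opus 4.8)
The plan is to follow the proof of Proposition \ref{liutn}, refining it with the monomial bookkeeping already used in Lemma \ref{liut3h}; indeed the present statement is nothing but the general-$n$, general-$k$ version of that lemma. Write $m=g_1+\cdots+g_k+1$. Since each $g_i$, as an element of $\mathbb{Z}_n\setminus\{0\}$, is represented by an integer $\geq 1$, and $\sum_i g_i\leq n-1$, one has
\[
2\leq g_1+1< g_1+g_2+1<\cdots< g_1+\cdots+g_k+1=m\leq n,
\]
so all matrix units below lie in $UT_n(F)$ and the column indices $g_1+\cdots+g_s+1$, $1\leq s\leq k$, are pairwise distinct.

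First I would make the substitution
\[
z_1=e_{1,\,g_1+1},\qquad z_j=e_{g_1+\cdots+g_{j-1}+1,\ g_1+\cdots+g_j+1}\quad(2\leq j\leq k),
\]
together with $y_i=\sum_{l=1}^{n}y_i^{l}e_{ll}$, where the $y_i^{l}$ are pairwise distinct commuting indeterminates. From $[e_{ab},e_{ll}]=(\delta_{bl}-\delta_{al})e_{ab}$ we get $[e_{ab},y_i]=(y_i^{b}-y_i^{a})e_{ab}$, while $[e_{1,\,g_1+\cdots+g_s+1},z_{s+1}]=e_{1,\,g_1+\cdots+g_{s+1}+1}$ because the product in the opposite order vanishes. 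Evaluating $c$ from left to right thus gives
\[
c=\Bigg(\prod_{s=1}^{k}\prod_{i=1}^{t}\big(y_i^{\,g_1+\cdots+g_s+1}-y_i^{1}\big)^{a_i^{(s)}}\Bigg)e_{1m},
\]
and then specializing $y_i^{1}=0$ for all $i$ leaves
\[
c=\Bigg(\prod_{s=1}^{k}\prod_{i=1}^{t}\big(y_i^{\,g_1+\cdots+g_s+1}\big)^{a_i^{(s)}}\Bigg)e_{1m}=:m_c\,e_{1m}.
\]

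I would then conclude exactly as in Lemma \ref{liut3h}. Because the partial sums $g_1+\cdots+g_s$ strictly increase, the indeterminates $y_i^{\,g_1+\cdots+g_s+1}$ occurring in $m_c$, indexed by $(i,s)\in\{1,\ldots,t\}\times\{1,\ldots,k\}$, are pairwise distinct; hence the map $c\mapsto m_c$ is injective, so distinct commutators of the stated form produce distinct monomials $m_c$, and distinct monomials in commuting indeterminates are linearly independent. Consequently, if $\sum_j\alpha_j c_j\in I$ is a linear combination of commutators of the given type --- assumed multihomogeneous, which is no loss since $F$ is infinite --- then the substitution above forces $\big(\sum_j\alpha_j m_{c_j}\big)e_{1m}=0$, whence $\sum_j\alpha_j m_{c_j}=0$ and all $\alpha_j=0$.

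This is essentially routine once the substitution is chosen; the only mildly delicate points are, first, the staircase argument already invoked in Proposition \ref{liutn} --- with $z_1$ pinned at the leftmost position, the only arrangement of $z_2,\ldots,z_k$ surviving the specialization is the one appearing in $c$, so commutators differing only in the order of the $z$'s do not interfere --- and second, carefully propagating the exponents $a_i^{(s)}$ through the bracketing so that they land on the correct column variables. The conceptual content is simply that the strictly increasing partial sums $g_1<g_1+g_2<\cdots$ place the blocks attached to different $z$'s into different columns; no idea beyond Proposition \ref{liutn} and Lemma \ref{liut3h} is required.
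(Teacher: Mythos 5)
Your proof is correct and follows essentially the same route as the paper's: the same substitution of matrix units for the $z_j$'s and diagonal generic matrices for the $y_i$'s, the same specialization $y_i^1=0$, the same monomial $m_c$, and the same appeal to a staircase argument to isolate a single ordering of the $z$'s. The only difference is that you spell out the intermediate bracket computations a bit more explicitly, which is a cosmetic improvement, not a change in method.
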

\begin{proof}
Suppose that $m-1=g_1+\cdots+g_k$ and consider first the substitution: 
\[
z_1=e_{1,g_1+1}, \quad
z_2=e_{g_1+1, g_1+g_2+1}, \quad\ldots, \quad
z_k=e_{g_1+\cdots+g_{k-1}+1, g_1+\cdots+g_k+1}.
\]
(Pay attention that $g_1+\cdots+g_k+1=m$.)

Once again, a staircase argument shows that, fixing $z_1$ at the leftmost position in the commutator, the only permutation of the $z_2$, \dots, $z_k$ that yields a nonzero element will be as in the commutator $c$.

Computing $c$ with the above substitution of the $z_i$, and assuming $y_i= y_i^1e_{11}+y_i^2 e_{22}+\cdots+y_i^ne_{nn}$, where the $y_i^j$ are commuting independent variables, gives
\[
c=\prod_{s=1}^k \prod_{i=1}^t (y_i^{g_1+\cdots+g_s+1} - y_i^1)^{a_i^{(s)}} e_{1m}.
\]
Now making the substitution $y_i^1=0$ for all $i$, we obtain:
\begin{equation}\label{cutn}
c= \prod_{s=1}^k \prod_{i=1}^t (y_i^{g_1+\cdots+g_s+1} )^{a_i^{(s)}} e_{1m}.
\end{equation}
Define the following monomial
\[
m_c= \prod_{s=1}^k \prod_{i=1}^t (y_i^{g_1+\cdots+g_s+1} )^{a_i^{(s)}}.
\]
Note that if $c$ and $c'$ are different commutators with the same permutation of the $z_2$, \dots, $z_k$, we have that the monomials $m_{c}$ and $m_{c'}$ are different. We suppose there is a nontrivial linear combination among commutators of type $c$
\[
\sum\limits_{i=1}^r \alpha_i c_i(z_1,\ldots, z_k,y_1,\ldots, y_t)
\]
which is a  graded identity for $UT_n(F)^{(-)}$.
First we get, by means of the substitution for the $z_i$, the correct order for the $z_2$, \dots, $z_k$ with respect to the commutator $c_1$. Then we make the substitution for the $y_i$ as above. By Equality (\ref{cutn}), we have
\[
0=\Bigl(\sum\limits_{j=1}^l \alpha_{i_j} m_{c_{i_j}}\Bigr)e_{1m},
\]
where the polynomials $c_{i_j}$ have the same permutation of the $z_2$, \dots, $z_k$ as the polynomial $c_1$. It follows that $0=\sum\limits_{j=1}^l \alpha_{i_j} m_{c_{i_j}}$.

But the monomials $m_{c_{i_j}}$ (in commuting variables) are  linearly independent because they are different for each $c_{i_j}$, hence $\alpha_{i_j}=0$. Using the same argument several times, we have that $\alpha_i=0$ for every $1\leq i\leq r$, and the claim follows.
\end{proof}

\begin{defi}\label{linordutn}
Let $f_i$, $f_j\in\mathcal{B}_{(g_1,\ldots, g_k)}$ be multihomogeneous  polynomials of the same multidegree, as given below, respectively
\begin{align*}
&[z_1,a_1^{(1)}y_1,\ldots,a_n^{(1)}y_n, z_{\sigma(2)},a_1^{(2)}y_1,\ldots,a_n^{(2)}y_n,\ldots, z_{\sigma(k)} ,a_1^{(k)}y_1,\ldots,a_n^{(k)}y_n];\\
    & [z_1,{a'_1}^{(1)}y_1,\ldots,{a'_n}^{(1)}y_n, z_{\tau(2)},{a'_1}^{(2)}y_1,\ldots,{a'_n}^{(2)}y_n,\ldots, z_{\tau(k)}, {a'_1}^{(k)}y_1,\ldots,{a'_n}^{(k)}y_n].
\end{align*}
Consider the finite sequences
\begin{align*}
V_{f_i}&=((a^{(1)}_1,\ldots,a^{(k)}_1),\ldots, (a^{(1)}_n,\ldots,a^{(k)}_n)), \\V_{f_j}&=(({a'_1}^{(1)},\ldots,{a'_1}^{(k)}),\ldots, ({a'_n}^{(1)},\ldots,{a'_n}^{(k)})).
\end{align*}
We define the order
$f_j\leq'  f_i$ if some of the following conditions is met:
\begin{itemize}
\item 
$(\sigma(2),\ldots,\sigma(k))<_{lex} (\tau(2),\ldots,\tau(k))$;
\item 
$\sigma=\tau$ and
$\sum_{s=1}^{k-1}\sum_{i=1}^n {a'_i}^{(s)}> \sum_{s=1}^{k-1}\sum_{i=1}^n {a_i}^{(s)} $;
\item $\sigma=\tau$, $\sum_{s=1}^{k-1}\sum_{i=1}^n {a'_i}^{(s)}=\sum_{s=1}^{k-1}\sum_{i=1}^n {a_i}^{(s)} $ and $(\sum_{i=1}^n{a_i}^{(1)},\ldots, \sum_{i=1}^n{a_i}^{(k-1)})<_{\textrm{lex}} (\sum_{i=1}^n{a'_i}^{(1)},\ldots, \sum_{i=1}^n{a'_i}^{(k-1)})$;
\item $\sigma=\tau$, $\sum_{s=1}^{k-1}\sum_{i=1}^n {a'_i}^{(s)}=\sum_{s=1}^{k-1}\sum_{i=1}^n {a_i}^{(s)} $,  $(\sum_{i=1}^n{a_i}^{(1)},\ldots, \sum_{i=1}^n{a_i}^{(k-1)})=(\sum_{i=1}^n{a'_i}^{(1)},\ldots, \sum_{i=1}^n{a'_i}^{(k-1)})$ and $({a_1}^{(1)},\ldots,{a_n}^{(1)}, \ldots, {a_1}^{(k-1)}, \ldots, {a_n}^{(k-1)})\leq_{lex} ({a'_1}^{(1)},\ldots,{a'_n}^{(1)}, \ldots, {a'_1}^{(k-1)}, \ldots, {a'_n}^{(k-1)})$.
\end{itemize}
\end{defi}
Observe that this order is linear on the polynomials of the same multidegree in $\mathcal{B}_{(g_1,\ldots,g_k)}$.

Suppose $f=\sum_{i=1}^t f_i$ is a multihomogeneous polynomial, where $f_i\in\mathcal{B}_{(g_1,\ldots, g_k)}$ and $\alpha_i\in F\setminus\{0\}$. By Propositions \ref{liutn} and \ref{liutnh},   this way of expressing $f$ as a linear combination of elements of $\mathcal{B}_{(g_1,\ldots, g_k)}$ is unique. For this reason, we can define the leading monomial of $f$ with respect to the order $\leq'$.

\begin{defi}\label{mlUTn}
Let $f=\sum_{i=1}^t \alpha_i f_i$ be a multihomogeneous polynomial, where $f_i\in\mathcal{B}_{(g_1,\ldots, g_k)}$ and $\alpha_i\in F\setminus \{0\}$. We define the leading monomial of $f$ by
\[
\textrm{ml}(f)= \max_{\leq'} \{f_i\mid 1\leq i\leq n\}, 
\]
and the leading coefficient of $f$, $\textrm{cl}(f)$, as the coefficient of $\textrm{ml}(f)$. 
\end{defi}
\begin{prop}\label{oderpreserutn}
Consider two elements in $\mathcal{B}_{(g_1,\ldots, g_k)}$
\begin{align*}
&f_i=[z_1,a_1^{(1)}y_1,\ldots,a_n^{(1)}y_n, z_{\sigma(2)},a_1^{(2)}y_1,\ldots,a_n^{(2)}y_n,\ldots, z_{\sigma(k)}, a_1^{(k)}y_1,\ldots,a_n^{(k)}y_n];\\
&f_j=[z_1,{a'_1}^{(1)}y_1,\ldots,{a'_n}^{(1)}y_n, z_{\tau(2)},{a'_1}^{(2)}y_1,\ldots,{a'_n}^{(2)}y_n,\ldots, z_{\tau(k)}, {a'_1}^{(k)}y_1,\ldots,{a'_n}^{(k)}y_n]
\end{align*}
of the same multidegree and suppose that $f_j\leq ' f_i$. Let $f_i^{(s)}$, $f_j^{(s)}$ be the polynomials obtained by replacing the variable $z_s$ by $[z_s, y]$ in $f_i$ and $f_j$, respectively, where $1\leq s\leq k$. Then $ml(f_j^{(s)})\leq' ml(f_i^{(s)})$.
\end{prop}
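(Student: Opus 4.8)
The plan is to compute $f_i^{(s)}$ and $f_j^{(s)}$ modulo $I$ as linear combinations of elements of $\mathcal{B}_{(g_1,\ldots,g_k)}$, read off their leading monomials, and then compare. Write $l$ and $l'$ for the positions of the variable $z_s$ inside $f_i$ and $f_j$, i.e.\ $\sigma(l)=s$ and $\tau(l')=s$. First I would perform the substitution $z_s\mapsto[z_s,y]$ and expand it by the Jacobi identity in the form $[X,[z_s,y]]=[X,z_s,y]-[X,y,z_s]$, exactly as in the computation preceding Proposition \ref{conseutn}; then, using that the degree-$0$ variables commute in $\mathcal{L}_{\mathbb{Z}_n}/I$ so that each block of $y$'s may be rewritten in non-decreasing order (a consequence of Theorem \ref{idutnf}), one obtains that $f_i^{(s)}$ equals, modulo $I$, the difference of two monomials of $\mathcal{B}_{(g_1,\ldots,g_k)}$: the one obtained from $f_i$ by inserting the fresh variable $y$ into the block of $y$'s immediately to the right of $z_s$, minus the one obtained by inserting $y$ into the block immediately to its left. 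When $z_s$ sits in the leftmost slot (which forces $s=1$, $l=1$) only the first monomial appears. By Propositions \ref{liutn} and \ref{liutnh} these two monomials are distinct and linearly independent modulo $I$, hence $ml(f_i^{(s)})$ is the $\leq'$-larger of them, and likewise for $f_j^{(s)}$.

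The second step is to pin down these leading monomials. Comparing the two candidates layer by layer in Definition \ref{linordutn}: they carry the same permutation, so one passes to the number of $y$'s lying in the first $k-1$ blocks. If $z_s$ is the rightmost $z$, putting $y$ to its left raises this count while putting $y$ to its right does not; if $z_s$ is not the rightmost $z$, the counts agree but the per-block total vector $(\sum_i a_i^{(1)},\ldots,\sum_i a_i^{(k-1)})$ is lexicographically smaller when $y$ is placed to the right of $z_s$. In either case — recalling that in $\leq'$ a monomial with fewer early $y$'s is the \emph{larger} one — the $\leq'$-maximal candidate is the one with $y$ immediately to the right of $z_s$. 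Hence $ml(f_i^{(s)})$ is obtained from $f_i$ by inserting $y$ right after $z_s$, and $ml(f_j^{(s)})$ is obtained from $f_j$ in the same way; note that this operation leaves the permutation of the $z$'s unchanged and, if $l\le k-1$, raises by one the multiplicity of $y$ in the $l$-th block.

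With this in hand, I would deduce $ml(f_j^{(s)})\leq' ml(f_i^{(s)})$ from $f_j\leq' f_i$ by splitting into the two ways the hypothesis can hold. If $f_j\leq' f_i$ is witnessed at the permutation level, then since the substitution leaves the permutations untouched, the same comparison between $ml(f_j^{(s)})$ and $ml(f_i^{(s)})$ holds at the permutation level. Otherwise $f_i$ and $f_j$ share a common permutation $\sigma=\tau$, so $z_s$ occupies the same position $l=l'$ in both, and $ml(f_i^{(s)})$, $ml(f_j^{(s)})$ arise from $f_i$, $f_j$ by one and the same operation, namely inserting $y$ into the $l$-th block. If $l\le k-1$ this adds $1$ to the first-$(k-1)$-block $y$-count, adds $1$ to the $l$-th coordinate of the per-block total vector, and inserts into the fully expanded count vector one new coordinate that is the same on both sides; if $l=k$ it affects none of these data. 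Since lexicographic order is preserved under adding a constant to a fixed coordinate of both vectors and under inserting identical coordinates at the same positions, whichever of the remaining conditions of Definition \ref{linordutn} witnesses $f_j\leq' f_i$ still witnesses $ml(f_j^{(s)})\leq' ml(f_i^{(s)})$, which is the claim.

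I expect the only point needing real care to be the first step: verifying that the Jacobi expansion produces exactly the two monomials described and nothing more, which comes down to carefully re-sorting the degree-$0$ variables within each block and invoking the linear independence of Propositions \ref{liutn} and \ref{liutnh} to rule out cancellation. Everything afterwards is a mechanical, if somewhat fiddly, verification of the nested lexicographic conditions defining $\leq'$, where the chief pitfall is merely staying consistent with its reversed convention, in which a larger number of early $y$'s makes a monomial smaller.
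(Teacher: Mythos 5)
Your proof is correct and follows essentially the same route as the paper's: both expand $f_i^{(s)}$, $f_j^{(s)}$ via the Jacobi identity into (at most) two $\mathcal{B}_{(g_1,\ldots,g_k)}$-monomials, identify the leading term as the one with $y$ inserted immediately to the right of $z_s$, and observe that since $f_i$ and $f_j$ undergo the same insertion in the same block the comparison in $\leq'$ is preserved. You spell out the verification of which of the two summands is the leading monomial and the case-by-case preservation of the nested conditions in Definition~\ref{linordutn} more explicitly than the paper, which asserts these directly, but the underlying argument is identical.
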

\begin{proof}
Consider the finite sequences
\begin{align*}
V_{f_i}&=((a^{(1)}_1,\ldots,a^{(k)}_1),\ldots, (a^{(1)}_n,\ldots,a^{(k)}_n)),\\
V_{f_j}&=(({a'_1}^{(1)},\ldots,{a'_1}^{(k)}),\ldots, ({a'_n}^{(1)},\ldots,{a'_n}^{(k)})).
\end{align*}
If $(\sigma(2),\ldots,\sigma(k))<_{lex}(\tau(2),\ldots,\tau(k))$, the result follows.

Now consider $\sigma=\tau$ and without loss of generality suppose that $\sigma$ is the identity. Hence
\begin{align*}
f_i^{(s)} & =  [z_1,{a_1}^{(1)}y_1,\ldots,{a_n}^{(1)}y_n,\ldots,z_s,y,{a_1}^{(s)}y_1,\ldots, {a_n}^{(s)}y_n,z_{s+1}, \\ 
&\qquad \qquad \qquad \qquad {a_1}^{(s+1)}y_1,\ldots,{a_n}^{(s+1)}y_n\ldots,z_k, {a_1}^{(k)}y_1,\ldots, {a_n}^{(k)}y_n]\\
&  - [z_1,{a_1}^{(1)}y_1,\ldots, {a_n}^{(1)}y_n\ldots, z_{s-1}, {a_1}^{(s-1)}y_1,\ldots, {a_n}^{(s-1)}y_n, y, z_s,\\
&\qquad \qquad \qquad \qquad {a_1}^{(s)}y_1,\ldots, {a_n}^{(s)}y_n,\ldots, z_k,{a_1}^{(k)}y_1,\ldots, {a_n}^{(k)}y_n];
\end{align*}

\begin{align*}
f_j^{(s)} & =  [z_1,{a'_1}^{(1)}y_1,\ldots,{a'_n}^{(1)}y_n,\ldots,z_s,y,{a'_1}^{(s)}y_1,\ldots,{a'_n}^{(s)}y_n,z_{s+1},\\
&\qquad \qquad \qquad \qquad {a'_1}^{(s+1)}y_1,\ldots,{a'_n}^{(s+1)}y_n,\ldots,z_k, {a'_1}^{(k)}y_1,\ldots, {a'_n}^{(k)}y_n]\\
& - [z_1,{a'_1}^{(1)}y_1,\ldots, {a'_n}^{(1)}y_n\ldots, z_{s-1}, {a'_1}^{(s-1)}y_1,\ldots, {a'_n}^{(s-1)}y_n, y, z_s,\\
&\qquad \qquad \qquad \qquad {a'_1}^{(s)}y_1,\ldots, {a'_n}^{(s)}y_n,\ldots, z_k,{a'_1}^{(k)}y_1,\ldots, {a'_n}^{(k)}y_n].
\end{align*}
By Definition \ref{linordutn}, we have
\begin{align*}
ml(f_i^{(s)})&=[z_1,{a_1}^{(1)}y_1,\ldots,{a_n}^{(1)}y_n,\ldots,z_s,y,{a_1}^{(s)}y_1,\ldots, {a_n}^{(s)}y_n,z_{s+1},\\
&\qquad \qquad \qquad \qquad {a_1}^{(1)}y_1,\ldots,{a_n}^{(s+1)}y_n\ldots,z_k, {a_1}^{(k)}y_1,\ldots, {a_n}^{(k)}y_n];\\ 
     ml(f_j^{(s)})&= [z_1,{a'_1}^{(1)}y_1,\ldots,{a'_n}^{(1)}y_n,\ldots,z_s,y,{a'_1}^{(s)}y_1,\ldots,{a'_n}^{(s)}y_n,z_{s+1},\\
&\qquad \qquad \qquad \qquad {a'_1}^{(1)}y_1,\ldots,{a'_n}^{(s+1)}y_n\ldots,z_k, {a'_1}^{(k)}y_1,\ldots, {a'_n}^{(k)}y_n].
\end{align*}
Observe  that the $s$-th block of variables $y$'s in both polynomials  $cl(f_i^{(s)})$ and $cl(f_j^{(s)})$ is subjected to the same modification by the same variable $y$. Recall that
\[
[z,y_1,y_2]=[z,y_2,y_1]\pmod{I}
\]
for every variable $z$ of degree different from $0$.  Hence the order is preserved, and this means  $ml(f_j^{(s)})\leq' ml(f_i^{(s)})$.
\end{proof}
\begin{cor}\label{oderpreserutncomblinear}
Consider  $f=\sum_{i=1}^t \alpha_i f_i$
a multihomogeneous polynomial, where $f_i\in\mathcal{B}_{(g_1,\ldots, g_k)}$, and suppose that $ml(f)=f_1$. Let $f_i^{(s)}$ be the polynomial obtained by replacing the variable $z_s$ by $[z_s, y]$ in $f_i$, where $1\leq s\leq k$ and $1\leq i\leq t$. Then $ml(f_i^{(s)})\leq' ml(f_1^{(s)})$.
\end{cor}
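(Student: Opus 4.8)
The plan is to derive this statement directly from Proposition \ref{oderpreserutn}, which already does all the work at the level of a single pair of monomials. First I would unpack the hypothesis $ml(f)=f_1$: by Definition \ref{mlUTn} this means precisely that $f_1=\max_{\leq'}\{f_i\mid 1\leq i\leq t\}$, so that $f_i\leq' f_1$ for every index $i$ with $1\leq i\leq t$. (When $f_i=f_1$ there is nothing to prove, since $ml(f_1^{(s)})=ml(f_1^{(s)})$; so I may assume $i\ne 1$, in which case $f_i<' f_1$.)

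Next, I would observe that for each fixed $i$ the two polynomials $f_i$ and $f_1$ lie in $\mathcal{B}_{(g_1,\ldots,g_k)}$ and are of the same multidegree — this is part of the standing assumption that $f=\sum_{i=1}^t\alpha_i f_i$ is multihomogeneous — so the pair $(f_i,f_1)$ satisfies exactly the hypotheses of Proposition \ref{oderpreserutn} with the roles ``$f_j$'' and ``$f_i$'' there played by $f_i$ and $f_1$ here. The polynomial $f_i^{(s)}$ defined in the statement, obtained by the substitution $z_s\mapsto[z_s,y]$, is literally the polynomial denoted $f_j^{(s)}$ in that proposition, and likewise $f_1^{(s)}$ is the $f_i^{(s)}$ there. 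Applying Proposition \ref{oderpreserutn} to this pair yields
\[
ml(f_i^{(s)})\leq' ml(f_1^{(s)}),
\]
which is exactly the claim. Since $s$ with $1\leq s\leq k$ and $i$ with $1\leq i\leq t$ were arbitrary, we are done.

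The only points that require any care — and they are routine — are, first, checking that the substitution in the corollary is the same substitution addressed in Proposition \ref{oderpreserutn} (so that its conclusion applies verbatim), and second, confirming that no extra hypothesis is hidden: Proposition \ref{oderpreserutn} requires $f_i$ and $f_1$ to have the same multidegree, which is guaranteed because they are the summands of a single multihomogeneous polynomial $f$. There is no genuine obstacle here; the content of the corollary is entirely contained in Proposition \ref{oderpreserutn}, and the corollary merely packages the pairwise statement into the form — ``the leading monomial of the largest summand stays largest after the substitution'' — that will be convenient for the subsequent induction argument.
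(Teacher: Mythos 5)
Your proof is correct and takes the same route as the paper's: both note that the summands $f_i$ of the multihomogeneous $f$ all have the same multidegree, so Proposition \ref{oderpreserutn} applies to each pair $(f_i,f_1)$ and gives $ml(f_i^{(s)})\leq' ml(f_1^{(s)})$ directly. Your write-up simply spells out the bookkeeping (unpacking $ml(f)=f_1$ into $f_i\leq' f_1$ and matching the roles of $f_i$, $f_j$ in the proposition) that the paper leaves implicit.
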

\begin{proof}
Recall that the commutators $f_i$ have the same  multidegree. So, applying the previous proposition, the result follows.
\end{proof}
\begin{prop}\label{seq2orderutn}
Consider $f$, $g$ two multihomogeneous polynomials such that
\[
f=\sum\limits_{i=1}^t \alpha_i f_i, \qquad g=\sum\limits_{j=1}^s \beta_j g_j,
\]
where $\alpha_i$, $\beta_j\in F\setminus\{0\}$, $f_i,g_j\in\mathcal{B}_{(g_1,\ldots, g_k)}$ for every $1\leq i\leq t$ and $1\leq j\leq s$. Suppose that $ml(f)\leq_{\mathcal{B}_{(g_1,\ldots, g_k)}} ml(g)$. Then there exists $h\in\langle f\rangle_{\mathbb{Z}_n}$ modulo $I$ such that $ml(h)=ml(g)$ and $cl(h)=cl(f)$. 
\end{prop}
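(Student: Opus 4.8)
The plan is to follow the proof of Proposition \ref{seq2orderut3}, carrying the leading-monomial bookkeeping through the larger number of blocks of $Z$-variables. Write $f=\sum_{i=1}^{t}\alpha_i f_i$ and $g=\sum_{j=1}^{s}\beta_j g_j$, and after relabelling assume $f_1=ml(f)$ and $g_1=ml(g)$, so that $cl(f)=\alpha_1$. By hypothesis $f_1\leq_{\mathcal{B}_{(g_1,\ldots,g_k)}} g_1$; in particular $f_1$ and $g_1$ use the same permutation of the $z$'s and $V_{f_1}\leq_k V_{g_1}$. By Proposition \ref{conseutn} the monomial $g_1$ is a consequence of $f_1$ modulo $I$, and inspecting its proof (which rests on the lemma that precedes it) one sees that $g_1$ is obtained from $f_1$ by a finite composite $\Phi$ of elementary operations of three kinds: $(a)$ an injective renaming $y_i\mapsto y_{\varphi(i)}$ of the variables in $Y$; $(b)$ a right bracketing $p\mapsto [p,y]$ with a fresh variable $y$, which appends $y$ to the last block; and $(c)$ for a fixed index $s$, the operator $D_s$ sending $p$ to the sum, over $1\leq r\leq s$, of the polynomials obtained from $p$ by the substitution $z_r\mapsto [z_r,y]$, which by the telescoping carried out in that lemma inserts the fresh variable $y$ into the $s$-th block of $p$ modulo $I$.

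I would then put $h:=\Phi(f)=\sum_{i=1}^{t}\alpha_i \Phi(f_i)$. Each of the operations $(a)$, $(b)$, $(c)$ is a legitimate $T_{\mathbb{Z}_n}$-operation (a graded substitution, respectively a bracket with a new variable), hence $h\in\langle f\rangle_{T_{\mathbb{Z}_n}}$ modulo $I$, which is the membership asserted by the statement. It remains to compute $ml(h)$ and $cl(h)$. For one elementary operation this is exactly Proposition \ref{oderpreserutn} together with Corollary \ref{oderpreserutncomblinear}: when $ml(f)=f_1$, replacing $z_s$ by $[z_s,y]$ in each $f_i$ gives $ml(f_i^{(s)})\leq' ml(f_1^{(s)})$, the $\leq'$-maximum among the monomials coming from $f_1^{(s)}$ being the one with the new $y$ at the front of the $s$-th block; for operations of type $(a)$ and $(b)$ the order $\leq'$ of Definition \ref{linordutn} is preserved trivially. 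Iterating over the elementary steps of $\Phi$ and using transitivity of $\leq'$, one gets $ml(\Phi(f_i))\leq' ml(\Phi(f_1))$ for every $i$, while $ml(\Phi(f_1))$ is the monomial obtained from $f_1$ by successively placing each fresh $y$ at the front of its target block, i.e. $ml(\Phi(f_1))=g_1=ml(g)$. Since the remaining $ml(\Phi(f_i))$, $i\geq 2$, are strictly $\leq'$-smaller than $g_1$ — here one invokes Lemmas \ref{liutn} and \ref{liutnh} so that the expansions in $\mathcal{B}_{(g_1,\ldots,g_k)}$ are unique and $ml$ is well defined, whence there can be no cancellation at the top — the monomial $g_1$ occurs in $h$ with coefficient $\alpha_1$. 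Thus $ml(h)=ml(g)$ and $cl(h)=\alpha_1=cl(f)$.

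The step I expect to be the main obstacle is the last one: controlling the leading monomial of $\Phi(f)$, i.e. showing that after all the telescoping sums produced by the operators $D_s$ the unique $\leq'$-maximal monomial occurring in $\Phi(f)$ is $g_1$, surviving with coefficient $\alpha_1$. This is a careful induction on the elementary steps, verifying that each $D_s$ pushes exactly one $y$ to the front of the $s$-th block of the current leading monomial while keeping every competitor strictly lower; this is precisely where the particular mix in Definition \ref{linordutn} (degree in the first $k-1$ blocks, then lexicographic comparison of the block sizes, then lexicographic comparison of the block contents) and Corollary \ref{oderpreserutncomblinear} do the real work. By contrast the membership $h\in\langle f\rangle_{T_{\mathbb{Z}_n}}$ and the three-type description of $\Phi$ are routine consequences of Proposition \ref{conseutn}.
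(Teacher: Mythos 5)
Your proof is correct and follows essentially the same route as the paper's: identify, via Proposition \ref{conseutn} and the lemma before it, the concrete sequence $\Phi$ of $T_{\mathbb{Z}_n}$-operations carrying $ml(f)$ to $ml(g)$; apply $\Phi$ to all of $f$; and invoke Proposition \ref{oderpreserutn} and Corollary \ref{oderpreserutncomblinear} to track the leading monomial through each step. The paper's own proof is much terser (``making the same computations''), and what you have written is in effect the unpacked version of that sentence, with the decomposition of $\Phi$ into the three elementary operations and the observation that each telescoping sum $D_s$ collapses modulo $I$ to the insertion of $y$ at the front of block $s$, so that modulo $I$ the map on basis monomials is injective and $\leq'$-preserving, which gives $ml(\Phi(f))=\Phi(ml(f))=ml(g)$ with unchanged leading coefficient.
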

\begin{proof}
By Proposition \ref{conseutn}, we have that $ml(g)$ is a consequence of $ml(f)$ modulo $I$. Then Corollary \ref{oderpreserutncomblinear}, making the same computations as in the case of $ml(f)$ in order to obtain $ml(g)$, we deduce a consequence $h$ from $f$ such that $ml(h)=ml(g)$. Moreover, the leading coefficient of $h$ is the same as that of $f$.
\end{proof}
\begin{defi}
Let $f$ be a multihomogeneous polynomial that is a linear combination of polynomials in $\mathcal{B}_{(g_1,\ldots, g_k)}$. Then $f$ is called a polynomial of type $(g_1,\ldots,g_k)$.
\end{defi}
\begin{prop}\label{sequtn}
There is no infinite sequence of polynomials  $\{f_i\}_{i\geq 1}$ of type $(g_1,\ldots,g_k)$
such that
\[
f_i\notin \langle f_1,\ldots, f_{i-1}\rangle_{T_{\mathbb{Z}_n}} \pmod{I}
\]
for every $i\geq 2$.
\end{prop}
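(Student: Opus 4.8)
The plan is to transplant, almost verbatim, the argument of Proposition \ref{sequt3} to the present setting, replacing $\mathcal{B}_{(1,1)}$ by $\mathcal{B}_{(g_1,\ldots,g_k)}$ and the $n=3$ auxiliary results by their general counterparts: the linear order $\leq'$ of Definition \ref{linordutn}, the leading monomial of Definition \ref{mlUTn}, the order-preservation statements Proposition \ref{oderpreserutn} and Corollary \ref{oderpreserutncomblinear}, and the ``lifting'' Proposition \ref{seq2orderutn}. So I would assume, towards a contradiction, that a sequence $\{f_i\}_{i\ge1}$ of polynomials of type $(g_1,\ldots,g_k)$ with $f_i\notin\langle f_1,\ldots,f_{i-1}\rangle_{T_{\mathbb{Z}_n}}\pmod I$ exists, and set $J_i=\langle f_1,\ldots,f_i\rangle_{T_{\mathbb{Z}_n}}\pmod I$. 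The first step is to reduce to the case in which the $f_i$ have pairwise distinct multidegrees in $Y$. For a fixed multidegree $d$ the space $W_d$ spanned by the monomials of $\mathcal{B}_{(g_1,\ldots,g_k)}$ of multidegree $d$ is finite dimensional (there are only finitely many ways to split each exponent into $k$ non-negative parts, and only $|S_k|$ permutations), while the subspaces $J_i\cap W_d$ form an ascending chain inside it; this chain stabilizes, so only finitely many $f_i$ can have multidegree $d$. Hence infinitely many distinct multidegrees occur, and passing to the subsequence of first occurrences preserves the defining property, because $\langle f_{i_1},\ldots,f_{i_{j-1}}\rangle\subseteq J_{i_j-1}$.

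Next I would, for each $i$, put $R_i=\{f\in J_i\setminus J_{i-1}\mid f$ is of type $(g_1,\ldots,g_k)$ and has the same $Y$-multidegree as $f_i\}$ and $\widehat{R_i}=\{ml(f)\mid f\in R_i\}$; since $\leq'$ restricts to a linear order on a finite set of monomials, $\widehat{R_i}$ has a $\leq'$-least element, and replacing $f_i$ by a member of $R_i$ attaining it I may assume $m_i:=ml(f_i)=\min_{\leq'}\widehat{R_i}$. The sequence $\{m_i\}_{i\ge1}$ lies in $\mathcal{B}_{(g_1,\ldots,g_k)}$, which satisfies the f.b.p., so by Theorem \ref{eqhigman} there is an infinite ascending subsequence $m_{i_1}\leq_{\mathcal{B}_{(g_1,\ldots,g_k)}}m_{i_2}\leq_{\mathcal{B}_{(g_1,\ldots,g_k)}}\cdots$ with $i_1<i_2<\cdots$; note that by Definition \ref{ordeg1gk} all the $m_{i_l}$ then carry the same permutation $\sigma\in S_k$. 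Writing $\alpha_{i_l}=cl(f_{i_l})\ne0$, I pick $s\ge1$ with $\sum_{l=1}^s\alpha_{i_l}\ne0$ (for instance $s=1$). Since $m_{i_l}\leq_{\mathcal{B}_{(g_1,\ldots,g_k)}}m_{i_{s+1}}$ for $1\le l\le s$, Proposition \ref{seq2orderutn} produces $h_l\in\langle f_{i_l}\rangle_{T_{\mathbb{Z}_n}}\pmod I$, multihomogeneous of the multidegree of $m_{i_{s+1}}$, with $ml(h_l)=m_{i_{s+1}}$ and $cl(h_l)=\alpha_{i_l}$; moreover $h_l\in J_{i_l}\subseteq J_{i_{s+1}-1}$. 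The polynomial $h=\sum_{l=1}^s h_l$ then satisfies $ml(h)=m_{i_{s+1}}$, $cl(h)=\sum_{l=1}^s\alpha_{i_l}\ne0$ and $h\in J_{i_{s+1}-1}$, so
\[
g=f_{i_{s+1}}-\bigl(\alpha_{i_{s+1}}\,cl(h)^{-1}\bigr)h
\]
is of type $(g_1,\ldots,g_k)$, has the same $Y$-multidegree as $f_{i_{s+1}}$, lies in $J_{i_{s+1}}$ but not in $J_{i_{s+1}-1}$, and has $ml(g)<'m_{i_{s+1}}$. Hence $g\in R_{i_{s+1}}$ and $ml(g)\in\widehat{R}_{i_{s+1}}$ would be strictly $\leq'$-smaller than $m_{i_{s+1}}=\min_{\leq'}\widehat{R}_{i_{s+1}}$, a contradiction.

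I expect the genuinely new content of this proposition to be light: everything delicate has been prepared beforehand, namely the existence and uniqueness of the leading monomial (from the linear independence in Propositions \ref{liutn} and \ref{liutnh}), the monotonicity of the two consequence-generating operations $f\mapsto[f,y]$ and $z_s\mapsto[z_s,y]$ on leading monomials (Proposition \ref{oderpreserutn}, Corollary \ref{oderpreserutncomblinear}), the ``replay'' lemma lifting a consequence relation between leading monomials to one between the polynomials themselves (Proposition \ref{seq2orderutn}), and the f.b.p. of $\mathcal{B}_{(g_1,\ldots,g_k)}$. The only points demanding a little care within this proof are the finite-dimensionality reduction to pairwise distinct multidegrees and the bookkeeping showing that the auxiliary polynomial $h$ really lands in $J_{i_{s+1}-1}$; and one should keep in mind that passing to an ascending subsequence of the $m_i$ automatically freezes the permutation $\sigma$, so that all substitutions $z_s\mapsto[z_s,y]$ and all right-bracketings by $y$'s take place inside a single block pattern and Definition \ref{linordutn} applies uniformly.
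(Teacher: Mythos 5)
Your proposal is correct and follows exactly the approach the paper intends: the paper's own proof of this proposition is a single sentence pointing back to the $UT_3$ case (Proposition \ref{sequt3}), and you have carried out that analogy in detail, using $\mathcal{B}_{(g_1,\ldots,g_k)}$ in place of $\mathcal{B}_{(1,1)}$ together with the general-$n$ auxiliary results (Definitions \ref{linordutn} and \ref{mlUTn}, Proposition \ref{oderpreserutn}, Corollary \ref{oderpreserutncomblinear}, Proposition \ref{seq2orderutn}, and the f.b.p.\ of $\mathcal{B}_{(g_1,\ldots,g_k)}$). You have in fact supplied two small justifications that the paper leaves implicit and which are worth having on record: the finite-dimensionality argument showing that only finitely many $f_i$ can share a $Y$-multidegree (so that passing to distinct multidegrees is genuinely ``without loss of generality''), and the observation that the ascending subsequence of leading monomials automatically fixes a single permutation $\sigma\in S_k$, which is exactly what lets Definition \ref{linordutn} and Proposition \ref{seq2orderutn} be applied uniformly along the subsequence.
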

\begin{proof}
The proof is completely analogous to that of Proposition \ref{seq2orderut3}.
\end{proof}
As a consequence of the previous result, we have the following corollary 
\begin{cor}\label{Bg1gkfinito}
Let $J$ be a $T_{\mathbb{Z}_n}$-ideal such that $I\subseteq J$. Consider the following set 
\[
\mathcal{A}_{(g_1,\ldots,g_k)}=\{f\in J\mid f\ \textrm{is a polynomial of type $(g_1,\ldots, g_k)$}\}.
\]
Then there exists a finite subset $\mathcal{A}' _{(g_1,\ldots,g_k)}\subseteq \mathcal{A}_{(g_1,\ldots,g_k)}$ such that 
\[
\langle\mathcal{A}_{(g_1,\ldots,g_k)}\rangle_{T_{\mathbb{Z}_n}}= \langle \mathcal{A}'_{(g_1,\ldots, g_k)}\rangle_{T_{\mathbb{Z}_n}} \pmod{I}.
\]
\end{cor}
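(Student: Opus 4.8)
The plan is to obtain Corollary \ref{Bg1gkfinito} as an immediate consequence of Proposition \ref{sequtn}, by the same routine Noetherian-type extraction argument that was used to deduce Corollary \ref{B11finito} from Proposition \ref{sequt3}. So I would not introduce any new combinatorial input here; all the real work has already been done upstream.

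First I would argue by contradiction: assume that no finite subset of $\mathcal{A}_{(g_1,\ldots,g_k)}$ generates $\langle\mathcal{A}_{(g_1,\ldots,g_k)}\rangle_{T_{\mathbb{Z}_n}}$ modulo $I$. In particular $\mathcal{A}_{(g_1,\ldots,g_k)}$ is nonempty (otherwise the empty set does the job), so I pick some $f_1\in\mathcal{A}_{(g_1,\ldots,g_k)}$. Then I build a sequence recursively: having chosen $f_1,\ldots,f_{i-1}\in\mathcal{A}_{(g_1,\ldots,g_k)}$, the finite set $\{f_1,\ldots,f_{i-1}\}$ does not generate $\langle\mathcal{A}_{(g_1,\ldots,g_k)}\rangle_{T_{\mathbb{Z}_n}}$ modulo $I$ by assumption, so $\langle f_1,\ldots,f_{i-1}\rangle_{T_{\mathbb{Z}_n}}\pmod{I}$ is properly contained in $\langle\mathcal{A}_{(g_1,\ldots,g_k)}\rangle_{T_{\mathbb{Z}_n}}\pmod{I}$; since the latter ideal is generated (modulo $I$) by $\mathcal{A}_{(g_1,\ldots,g_k)}$, some element $f_i\in\mathcal{A}_{(g_1,\ldots,g_k)}$ must fail to lie in $\langle f_1,\ldots,f_{i-1}\rangle_{T_{\mathbb{Z}_n}}\pmod{I}$. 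This produces an infinite sequence $\{f_i\}_{i\geq 1}$ of polynomials of type $(g_1,\ldots,g_k)$ with $f_i\notin\langle f_1,\ldots,f_{i-1}\rangle_{T_{\mathbb{Z}_n}}\pmod{I}$ for every $i\geq 2$, directly contradicting Proposition \ref{sequtn}. Hence a finite subset $\mathcal{A}'_{(g_1,\ldots,g_k)}\subseteq\mathcal{A}_{(g_1,\ldots,g_k)}$ with $\langle\mathcal{A}_{(g_1,\ldots,g_k)}\rangle_{T_{\mathbb{Z}_n}}=\langle\mathcal{A}'_{(g_1,\ldots,g_k)}\rangle_{T_{\mathbb{Z}_n}}\pmod{I}$ exists, which is the claim.

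I do not expect any genuine obstacle in this step: it is purely the standard deduction of a finite-basis statement from the absence of a strictly ascending sequence. The substantive content sits in the ingredients already established, namely the f.b.p.\ of $(\mathcal{B}_{(g_1,\ldots,g_k)},\leq_{\mathcal{B}_{(g_1,\ldots,g_k)}})$, the linear independence Propositions \ref{liutn} and \ref{liutnh} (which make $\mathrm{ml}$ and $\mathrm{cl}$ well defined), the order-preservation results Proposition \ref{oderpreserutn} and Corollary \ref{oderpreserutncomblinear}, and their assembly in Proposition \ref{sequtn}. The one point deserving an explicit line is that when $\{f_1,\ldots,f_{i-1}\}$ fails to generate, the witness $f_i$ can be chosen inside $\mathcal{A}_{(g_1,\ldots,g_k)}$ itself rather than merely inside the larger ideal; this is automatic because $\mathcal{A}_{(g_1,\ldots,g_k)}$ is, by its very definition, a generating set of that ideal modulo $I$.
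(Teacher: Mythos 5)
Your proof is correct and is exactly the argument the paper intends when it labels the corollary ``a consequence of the previous result'': a standard Noetherian extraction that converts the non-existence of an infinite strictly ascending chain (Proposition \ref{sequtn}) into the existence of a finite generating subset. No further comment is needed.
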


\begin{theo}
Suppose that $\ch F=0$ or $\ch F\geq n$. If $J$ is a $T_{\mathbb{Z}_n}$-ideal such that $I\subseteq J$, then $J$ is finitely generated as a $T_{\mathbb{Z}_n}$-ideal.
\end{theo}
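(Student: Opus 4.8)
The plan is to mimic the proof of the Specht property for $UT_3^{(-)}(F)$ in characteristic different from $2$, assembling the pieces proved in this section. First, since $F$ is infinite, $J$ is generated as a $T_{\mathbb{Z}_n}$-ideal by its multihomogeneous polynomials. Next I would use the hypothesis $\ch F=0$ or $\ch F\ge n$ to replace these by polynomials that are multilinear in the variables coming from the sets $Z^{g_i}$. This is legitimate because, by Theorem~\ref{idutnf}, in any nonzero monomial of $\mathcal{L}_{\mathbb{Z}_n}/I$ the occurring $Z$-variables have degrees $g_{i_j}\ne 0$ with $\sum_j g_{i_j}\le n-1$; hence a single variable of degree $g\ge 1$ occurs at most $n-1$ times, so a multihomogeneous polynomial has degree at most $n-1$ in each $Z$-variable modulo $I$. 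Since $\ch F=0$ or $\ch F\ge n>n-1$, the usual linearization process (the full polarization of a variable of degree $d$ is a consequence, and it implies back the original up to the invertible factor $d!$) shows that $J$ is generated modulo $I$ by polynomials multilinear in all their $Z$-variables; the $Y$-variables are left untouched.

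After this reduction, every generator of $J$ is, modulo $I$, a multihomogeneous polynomial all of whose monomials involve the same finite set of pairwise distinct $Z$-variables. By the description of the nonzero monomials of $\mathcal{L}_{\mathbb{Z}_n}/I$ recalled above, together with the identities $[z,y_1,y_2]=[z,y_2,y_1]\pmod I$ for $z$ of nonzero degree, such a polynomial is either an element of some $\mathcal{B}_{g_i}$ (a monomial of type I) or a polynomial of type $(g_1,\ldots,g_k)$ in the sense defined above (a combination of monomials of type II), for a fixed tuple of degrees $g_1,\ldots,g_k$ with $g_i\ne 0$ and $\sum_{i=1}^k g_i\le n-1$. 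Therefore
\[
J=\Big\langle \bigcup_{g_i\in\mathbb{Z}_n\setminus\{0\}}\mathcal{A}_{g_i}\ \cup\ \bigcup_{(g_1,\ldots,g_k)}\mathcal{A}_{(g_1,\ldots,g_k)}\Big\rangle_{T_{\mathbb{Z}_n}}\pmod I,
\]
where $\mathcal{A}_{g_i}=\{f\in J\mid f\in\mathcal{B}_{g_i}\}$ and $\mathcal{A}_{(g_1,\ldots,g_k)}=\{f\in J\mid f\text{ is a polynomial of type }(g_1,\ldots,g_k)\}$, and where in the second union it suffices to use, for each degree tuple, a fixed choice of distinct $Z$-variables (other choices give $T_{\mathbb{Z}_n}$-equivalent polynomials).

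The key point is now that both unions are finite: each $g_i$ ranges over $\{1,\ldots,n-1\}$, and since every $g_i\ge 1$ while $\sum_{i=1}^k g_i\le n-1$ we have $k\le n-1$, so there are only finitely many admissible tuples $(g_1,\ldots,g_k)$. For each of these finitely many sets I would invoke item iii of Proposition~\ref{Bgiutn} and Corollary~\ref{Bg1gkfinito} to get finite subsets $\mathcal{A}'_{g_i}\subseteq\mathcal{A}_{g_i}$ and $\mathcal{A}'_{(g_1,\ldots,g_k)}\subseteq\mathcal{A}_{(g_1,\ldots,g_k)}$ generating the same $T_{\mathbb{Z}_n}$-ideals modulo $I$. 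Their union is finite, and adjoining to it the finite set of generators of $I$ supplied by Theorem~\ref{idutnf} exhibits $J$ as finitely generated as a $T_{\mathbb{Z}_n}$-ideal. I expect the only genuinely delicate step to be the linearization in the $Z$-variables, namely verifying that the maximal degree in a single $Z$-variable is exactly $n-1$ and that $\ch F\ge n$ is precisely what allows one to recover a multihomogeneous polynomial from its linearization; everything afterwards is bookkeeping resting on the partially-well-ordered-set machinery already established.
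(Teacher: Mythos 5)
Your proposal is correct and follows the same strategy as the paper: reduce (by multihomogeneity and multilinearization in the $Z$-variables, which is where $\ch F=0$ or $\ch F\ge n$ is used) to finitely many classes $\mathcal{A}_{g_i}$ and $\mathcal{A}_{(g_1,\ldots,g_k)}$, and then invoke Proposition~\ref{Bgiutn}(iii) and Corollary~\ref{Bg1gkfinito} to extract finite generating subsets. You spell out a couple of small points the paper leaves implicit (finiteness of the set of admissible tuples $(g_1,\ldots,g_k)$, and fixing a choice of distinct $Z$-variables per tuple), but there is no substantive difference in the argument.
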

\begin{proof}
Since $F$ is an infinite field, $J$ is generated by its multihomogeneous polynomials. If $\ch F\geq n$ or $\ch F=0$, using the multilinearization process, we can consider that any multihomogeneous polynomial is linear in the variables of degree different from $0$, because by Theorem \ref{idutnf}, each of them can appear in the non-zero monomials of  ${\mathcal{L}_{\mathbb{Z}_n}}/I$ at most $n-1$ times. Hence, $J$ is generated as $T_{\mathbb{Z}_n}$-ideal, modulo $I$, by the following sets
\begin{itemize}
    \item  $\mathcal{A}_{g_i}=\{f\in J\mid f\in \mathcal{B}_{g_i}\}$ where $g_i\in\mathbb{Z}_n\setminus\{0\}$;
    \item $\mathcal{A}_{(g_1,\ldots, g_k)}=\{f\in J\mid f\ \textrm{is a polynomial of type $(g_1,\ldots,g_k)$}\}$, where $\sum\limits_{i=1}^k {g_i}\leq n-1$ and $g_i\neq 0$.
\end{itemize}
Using Proposition \ref{Bgiutn} and Corollary \ref{Bg1gkfinito}, we get that there exist finite subsets $\mathcal{A}'_{g_i}\subseteq \mathcal{A}_{g_i}$, where $g_i\in\mathbb{Z}_n\setminus\{0\}$, and $\mathcal{A}'_{(g_1,\ldots,g_k)}\subseteq\mathcal{A}_{(g_1,\ldots, g_k)}$ such that
\begin{align*}
\langle\mathcal{A}_{g_i}\rangle_{T_{\mathbb{Z}_n}}&= \langle \mathcal{A}'_{g_i}\rangle_{T_{\mathbb{Z}_n}} \pmod{I}.\\
\langle\mathcal{A}_{(g_1,\ldots, g_k)}\rangle_{T_{\mathbb{Z}_n}} &= \langle \mathcal{A}'_{(g_1,\ldots, g_k)}\rangle_{T_{\mathbb{Z}_n}} \pmod{I}.
\end{align*}
It follows 
\[
J=\langle \mathcal{M} \rangle_{T_{\mathbb{Z}_n}} \pmod{I},
\]
where $\mathcal{M}$ is a finite set. Then $J=\langle \mathcal{M}\cup I \rangle_{T_{\mathbb{Z}_n}}$ and since $I$ has a finite basis, we can conclude that $J$ is finitely generated as a $T_{\mathbb{Z}_n}$-ideal.
\end{proof}

\section*{Acknowledgements}
We thank Alexei Krasilnikov for drawing our attention to his paper \cite{Kr0} and for pointing out several misprints in the text.

\end{document}